\theoremstyle{plain}
\newtheorem{theorem}{Theorem}[section]
\newtheorem{proposition}[theorem]{Proposition}
\newtheorem{lemma}[theorem]{Lemma}
\newtheorem{corol}[theorem]{Corollary}
\theoremstyle{remark}
\newtheorem{remark}[theorem]{Remark}
\theoremstyle{definition}
\newtheorem{definition}[theorem]{Definition}
\newtheorem{assumption}[theorem]{Assumption}
\numberwithin{equation}{section}
\DeclareFontFamily{U}{mathx}{}
\DeclareFontShape{U}{mathx}{m}{n}{<-> mathx10}{}
\DeclareSymbolFont{mathx}{U}{mathx}{m}{n}
\DeclareMathAccent{\widehat}{0}{mathx}{"70}
\DeclareMathAccent{\widecheck}{0}{mathx}{"71}
\DeclareMathOperator{\Span}{span}
\newcommand{\numberset}{\mathbb}
\newcommand{\N}{\numberset{N}}
\newcommand{\R}{\numberset{R}}
\newcommand{\C}{\numberset{C}}
\newcommand{\Z}{\numberset{Z}}
\newcommand{\D}{\mathcal{D}}
\newcommand{\FF}{\mathcal{F}}
\newcommand{\LL}{\mathcal{L}}
\newcommand{\RR}{\mathcal R}
\newcommand{\ee}{\mathrm e}
\newcommand{\oo}{\mathrm o}
\title[Dirac solitons in 1D NLS]{Dirac solitons in one--dimensional\\ nonlinear Schr\"odinger equations}
\author[W. Borrelli]{William Borrelli}
\address{W. Borrelli: Politecnico di Milano, Dipartimento di Matematica, P.zza Leonardo da Vinci, 32, 20133 Milano, Italy.}
\email{william.borrelli@polimi.it}
\author[E. Danesi]{Elena Danesi}
\address{E. Danesi: Politecnico di Torino, Dipartimento di Scienze Matematiche ``G.L. Lagrange'', Corso Duca degli Abruzzi, 24, 10129, Torino, Italy}
\email{elena.danesi@polito.it}
\author[S. Dovetta]{Simone Dovetta}
\address{S. Dovetta: Politecnico di Torino, Dipartimento di Scienze Matematiche ``G.L. Lagrange'', Corso Duca degli Abruzzi, 24, 10129, Torino, Italy}
\email{simone.dovetta@polito.it}
\author[L. Tentarelli]{Lorenzo Tentarelli}
\address{L. Tentarelli: Politecnico di Torino, Dipartimento di Scienze Matematiche ``G.L. Lagrange'', Corso Duca degli Abruzzi, 24, 10129, Torino, Italy}
\email{lorenzo.tentarelli@polito.it}
\begin{document}

\begin{abstract}
In this paper we study a family of one--dimensional stationary cubic nonlinear Schr\"odinger (NLS) equations with periodic potentials and linear part displaying \emph{Dirac points} in the dispersion relation. By introducing a suitable periodic perturbation, one can open a spectral gap around the Dirac--point energy. This allows to construct standing waves of the NLS equation whose leading--order profile is a modulation of Bloch waves by means of the components of a spinor solving an appropriate cubic nonlinear Dirac (NLD) equation. We refer to these solutions as \emph{Dirac solitons}. Our analysis thus provides a rigorous justification for the use of the NLD equation as an effective model for the original NLS equation.
\end{abstract}

\maketitle

\noindent {\footnotesize {AMS Subject Classification:} 35Q55, 35Q51, 35Q40, 81Q05, 35B32, 35B20}

\smallskip
\noindent {\footnotesize {Keywords:} Dirac solitons, Dirac points, NLS equations, NLD equations, standing waves, periodic Schr\"odinger operators}

\section{Introduction and main results}
\label{sec:intro}

The aim of the paper is to investigate the validity of one--dimensional nonlinear Dirac (NLD) equations as effective models for a class of one--dimensional cubic focusing nonlinear Schr\"odinger (NLS) equations with periodic potentials.

It is well-known that the spectrum of a periodic Schr\"odinger operator
\begin{equation}
\label{eq:H}
\mathcal{H}:=-\partial^2_x+V(\cdot),\qquad\text{with}\quad V\:\text{smooth, real--valued and 1--periodic},
\end{equation}
is purely absolutely continuous  and displays a band structure (see, e.g., \cite[Chapter XIII]{RS81}). Heuristically, this suggests the possibility to look for solutions of equations
\begin{equation}
\label{eq-generalissima}
(\mathcal{H}-\mu)\,u=f(x,u)
\end{equation}
in the form
\[
u(x)\sim \Psi(\varepsilon,x)\,\Phi(x)
\]
where $\mu$ lies in a spectral gap of $\mathcal{H}$ close to a band edge $\mu_*$, $\Phi$ is a generalized eigenfunction (usually called \emph{Bloch wave}) associated with $\mu_*$, $\varepsilon\to0^+$ as $\mu\to\mu_*$. The function $\Psi$ is a suitable modulating coefficient to be found, which is expected to vanish as $\mu$ approaches $\mu_*$. In view of this, the equation satisfied by $\Psi$ can be seen as an effective model for \eqref{eq-generalissima} and depends on the local geometry of the bands involved.

In general, this is a mere intuition and an actual realization of this programme requires a rigorous justification. Clearly, it is unreasonable to expect that such a construction may work for any choice of the function $f$. In view of its well--established relevance in the modeling of a variety of physical systems, ranging from nonlinear optics to solild state physics, a major attention has been devoted thus far to focusing power--like terms as
\begin{equation}
\label{eq-fpower}
f(x,u)=|u|^{p-1}\,u,
\end{equation}
with a special focus on the cubic case $p=3$. Several works have been developed in this direction, addressing various aspects of the NLS with periodic potentials and the bifurcation of localized nonlinear modes close to band edges of spectral gaps. For a general overview in this context we refer for instance to the monograph  \cite{Pely11} with the references therein, and to, e.g., \cite{DP19,DPR11,DW20,IW10} for more recent results.

\medskip
In the present paper we are interested in equations of the form \eqref{eq-generalissima} with a cubic nonlinearity \eqref{eq-fpower} and a periodic Schr\"odinger operator \eqref{eq:H} that admits conical points in the dispersion relation in the quasi--momentum/energy plane. These points are known as \emph{Dirac points} and arise at an energy $\mu_*$ where two spectral bands touch each other, corresponding to two linearly independent Bloch waves. The existence of this type of points has been established by \cite{FLW17} and \cite{FW12hd} in one and two spatial dimensions, respectively. To date, this model seems to have been discussed only in the two--dimensional time--dependent version
\begin{equation}
\label{eq:nls}
i\partial_t v =\mathcal{H}v  - \lvert  v\rvert^2 v.
\end{equation}
In this setting, an effective nonlinear Dirac equation has been formally derived in \cite{FW12w}, and then rigorously justified in \cite{AS18}. More precisely, letting $\Phi_\pm(x,k_*)$ be the two linearly independent Bloch waves at $\mu_*$, indexed with the associated quasi--momentum $k_*$, \cite{AS18} establishes that, starting with an initial datum 
\begin{equation}\label{concentrated}
v^{\varepsilon}_{0}(x)\underset{\epsilon\rightarrow0^{+}}{\sim}\sqrt{\varepsilon}\,\big(\psi_{0,-}(\varepsilon x)\Phi_{-}(x,k_*)+\psi_{0,+}(\varepsilon x)\Phi_{+}(x,k_*)\big),
\end{equation}
the corresponding solution of \eqref{eq:nls} evolves as 
\begin{equation}\label{eq:approxdyn}
v^{\varepsilon}(t,x)\underset{\epsilon\rightarrow0^{+}}{\sim}e^{-i\mu_* t}\sqrt{\varepsilon}\big(\psi_{-}(\varepsilon t,\varepsilon x)\Phi_{-}(x,k_*)+\psi_{+}(\varepsilon t,\varepsilon x)\Phi_{+}(x,k_*)\big),
\end{equation}
with $\psi=(\psi_-,\psi_+)^T$ solving the time--dependent nonlinear Dirac equation obtained in \cite{FW12w} with initial datum $(\psi_{0,-},\psi_{0,+})^T$. This has been proved on a finite timescale $[0,T_\varepsilon)$, with $\displaystyle\lim_{\varepsilon\to0^+}T_\varepsilon=+\infty$. The finiteness of such timescale reflects the fact that the approximation deteriorates after a certain time, given the dispersive nature of the dynamics. This result is the nonlinear counterpart of that proved in the seminal paper \cite{FW14w} in the linear case.

\medskip
In this paper, we address the analogous question for \emph{standing waves} in one space dimension, that is, solutions to \eqref{eq:nls} of the form
\begin{equation}
	\label{eq:vu}
v(t,x)=e^{-i\mu t}u(x),\qquad (t,x)\in\R\times\R,
\end{equation}
where $\mu \in \R$ and $u$ is the corresponding spatial profile to be determined as a non--trivial localized solution of
\begin{equation}
\label{eq-stationarywrong}
(\mathcal{H}-\mu)\,u=|u|^2u.
\end{equation}
In view of the time--dependent analogue mentioned before, it is natural to search for solutions of \eqref{eq-stationarywrong} with frequency $\mu$ close to the energy $\mu_*$ of a Dirac point of $\mathcal{H}$. However, no such solution exists. Indeed, assume by contradiction that $u$ is a non--trivial $L^2(\R)$--solution decaying to zero at infinity. Since $\mu_*$ is an interior point of the continuous spectrum of $\mathcal{H}$, the same is true for $\mu$. Then, letting $\mathcal V(x):=-\vert u(x)\vert^2$, it is not difficult to check that the difference between the resolvents of $\mathcal{H}$ and $\mathcal{H}+\mathcal V$ is compact on $L^2(\R)$, so that the essential spectra of the two operators coincide by the Weyl Theorem (see, e.g., \cite[Thm. XIII.14]{RS81}). Hence, $\mu$ is also in the essential spectrum of $\mathcal{H}+\mathcal V$, but cannot be an embedded eigenvalue as it is not an endpoint of a spectral band of $\mathcal{H}+\mathcal V$ and $\int_{\R}\vert \mathcal V\vert\,dx=\int_{\R} \vert u\vert^2\,dx<+\infty$ (see, e.g., \cite[Section 5]{KV00} and references therein). However, by \eqref{eq-stationarywrong}, $\mu$ has to be an eigenvalue of $\mathcal{H}+\mathcal{V}$ too, which is a contradiction.

Note that the very same problem occurs also in the linear case considered in \cite{FLW17}, where one assumes $f\equiv0$. Remarkably, that paper shows that this difficulty can be overcome by perturbing the operator $\mathcal{H}$ with a term of the form $
\delta\kappa(\delta x)W(x)$, with $0<\delta\ll1$, $W$ 1--periodic, and $\kappa$ a smooth connection between two asymptotic constants $\pm\kappa_\infty$. In this way, a spectral gap around $\mu_*$ can be opened, allowing to find eigenfunctions of the resulting Hamiltonian constructed by means of zero--energy modes of a suitable Dirac operator.

Building on \cite{FLW17}, we address \eqref{eq-stationarywrong} adding a perturbation $\delta W(x)$ to $\mathcal{H}$, with $W$ a suitable real--valued smooth and $1$--periodic potential. Precisely, we take $W$ so that there exists $\vartheta_\sharp\in\R\setminus\{0\}$ for which, given an arbitrary constant $a\in(0,1)$, the operator
\begin{equation}
\label{eq-opwithgap}
\mathcal{H}_\delta:=\mathcal{H}+\delta W(\cdot)=-\partial_x^2+V(\cdot)+\delta W(\cdot)\qquad\text{on}\quad L^2(\R),
\end{equation}
admits a spectral gap containing the interval
\begin{equation}\label{eq:intgap}
\mathcal I_\delta:= \big( \mu_* - a\delta \lvert \vartheta_\sharp \rvert, \mu_* +a \delta \lvert \vartheta_\sharp \rvert \big),
\end{equation}
for $\delta$ sufficiently small. Then, we can search for $(\mu_\delta,u_\delta)\in\mathcal{I}_\delta\times L^2(\R^2)$ satisfying
\begin{equation}
\label{NLS}
\left(\mathcal{H}_\delta-\mu_\delta \right)u_\delta=\vert u_\delta\vert^2 u_\delta,\qquad x\in\R,
\end{equation}
for instance taking
\begin{equation}
\label{eq:muas}
\mu_\delta=\mu_*+\mu_\sharp\,\delta,
\end{equation}
for some $\mu_\sharp$ to be determined (which can be seen as a first order approximation of a general frequency close to $\mu_*$). More precisely, we will establish the existence of a solution to this problem, which we call \emph{Dirac soliton}, of the following form
  \begin{equation}
  \label{eq:uform}
  u_\delta(x)=\sqrt{\delta}\big(\Psi_-(\delta x)\Phi_-(x,k_*)+\Psi_+(\delta x)\Phi_+(x,k_*)+r_\delta(x)\big),
  \end{equation}
  where $r_\delta(x)$ is a lower order correction term as $\delta\to0^+$, and
  \[
  \Psi(y):=\big(\Psi_-(y),\Psi_+(y)\big)^T,\qquad y\in\R,
  \]
is a spinor solving the nonlinear Dirac equation
  \medskip
  \begin{equation}
    \label{NLD}
  \left( i c_\sharp \sigma_3 \partial_y + \vartheta_\sharp \sigma_1 -  \mu_\sharp\right) \Psi = \mathcal G_{\beta_1, \beta_2} (\Psi).
    \end{equation}
   Here $c_\sharp$ is a real non--vanishing coefficient depending on the Bloch waves $\Phi_\pm(\cdot,k_*)$, $\sigma_1$ and $\sigma_3$ are the usual Pauli matrices
    \[
    \sigma_1:=\begin{pmatrix} 0 & 1 \\ 1 & 0 \end{pmatrix}\qquad\text{and}\qquad\sigma_3:=\begin{pmatrix} 1 & 0 \\ 0 & -1 \end{pmatrix}\,,
    \]
    and $\mathcal G_{\beta_1, \beta_2}$ is the following cubic nonlinearity 
    \begin{equation}
    \label{eq-nonlinearity}
    \mathcal G_{\beta_1, \beta_2} (\Psi): =  \begin{pmatrix}
        \beta_1 \lvert \Psi_- \rvert^2 + 2 \beta_1 \lvert \Psi_+ \rvert^2 & \beta_2 \ \overline{\Psi_-}\Psi_+ \\
        \beta_2 \ \Psi_-\overline{\Psi_+} & \beta_1 \lvert \Psi_+ \rvert^2 + 2 \beta_1 \lvert \Psi_- \rvert^2
    \end{pmatrix} \Psi\,,
    \end{equation}
$\beta_1,\,\beta_2\in\R$ being coefficients depending on the sole potential $V$. 

We highlight that we prove this result not only for a particular value of $\mu_\sharp$, but for a \emph{whole} interval of values. Indeed, if $\mu_\delta$ is as in \eqref{eq:muas} and belongs to $\mathcal{I}_\delta$ given by \eqref{eq:intgap}, then $\mu_\sharp\in(-\vert\vartheta_\sharp\vert,\vert\vartheta_\sharp\vert)$; that is, $\mu_\sharp$ is in the spectral gap of the one--dimensional Dirac operator 
\begin{equation}
\label{eq-diracop}
\mathcal{D}:=ic_\sharp \sigma_3 \partial_y + \vartheta_\sharp \sigma_1
\end{equation}
present in the linear part of \eqref{NLD}. This somehow explains the existence of Dirac solitons, since (up to a proper scaling) they can be roughly seen as bifurcating from bound states of \eqref{NLD}.

To the best of our knowledge, this is the first general result of this type in a stationary nonlinear context. In particular, this provides a rigorous construction of standing waves for periodic NLS equations moving from solutions of effective stationary NLD equations, in one space dimension. The possibility of such construction was a natural question in view of the results available for the linear analogue discussed in \cite{FLW17} (and later significantly improved in \cite{DFW20}). However, this type of relation between NLS and NLD seemed to be quite unexplored so far, despite a rather wide literature addressing existence and qualitative properties for solutions of the latter (see, e.g., \cite{BD06,B17,B18,B21,B22,BF19,CPS16,DR08,DW08,ES95,PS14,PS12}).


\subsection{Setting and main results}
\label{sec:main}

Now, we discuss in detail our results. Preliminarily, we fix the following standard notation (specific notation concerning periodic Schr\"odinger operators is introduced in Section \ref{sec:fb}):
\begin{enumerate}[label=(\roman*)]
\item we denote by $\N_+:=\N\setminus\{0\}$;
\item all functions have to be meant as complex--valued unless stated otherwise;
\item given a measurable set $X\subseteq\R$ we denote
\[
\left\langle f,g\right\rangle_{L^2(X)}:=\int_Xf\,\overline{g}\,dx\qquad\text{and}\qquad\|f\|_{L^2(X)}^2:=\left\langle f,f\right\rangle_{L^2(X)};
\]
\item for $s>0$, we set 
\[
L^{2,s}(\R):=\left\{f:\R\to\C:(1+|\cdot|^2)^{s/2}f(\cdot)\in L^2(\R)\right\},
\]
endowed with inner product and norms defined in the natural way;
\item Sobolev spaces $H^s(X)$, $s>0$, are defined in the usual way too;
\item $L^2(\R)$--Fourier transform and its inverse are given by
\[
\mathcal{F}(f)(\xi)=\widehat{f}(\xi):=\frac{1}{2\pi}\int_\R e^{-ix\xi}f(x)\,dx,\qquad\mathcal{F}^{-1}(g)(x)=\widecheck{g}(\xi):=\int_\R e^{ix\xi}g(\xi)\,d\xi.
\]
\end{enumerate}

\begin{remark}
Note that in \cite{FLW17} the inner product of $L^2(X)$ is defined with a conjugate on the first function. As a consequence, all the results that we refer to throughout the paper have to be meant as accordingly adapted.
\end{remark}

Before stating the main results of the paper, we introduce the assumptions on the potentials $V,\,W$ present in \eqref{NLS}. 

\begin{assumption}[Assumptions on $V$]
\label{itm:1}
$V \in C^\infty (\R)$, is real--valued and there exists a real sequence $(V_m)_{m\in2\N_+}\in\ell^2$ such that
\begin{equation}
\label{VF}
V(x) = \sum_{m \in 2\N_+} V_m \cos(2\pi mx),\qquad\forall x\in\R.
\end{equation}
Moreover, $V$ is such that the operator $\mathcal{H}$ introduced by \eqref{eq:H} admits a Dirac point, in the sense explained in Section \ref{subsec-Diracpoints}, that we denote by $(k_*,\mu_*)$.
\end{assumption}

Note that Assumption \ref{itm:1} implies that $V$ is even, that
	\begin{equation}\label{eq:Vminper}
	V(x+1/2)=V(x),\qquad \forall x\in\R,
	\end{equation}
with $1/2$ minimal period of $V$, and that
\begin{equation}
\label{eq-quasidiracpoint}
k_*=\pi.
\end{equation}
Furthermore, denoting by $\Phi_{\pm}(\cdot,k)$, $k\in[0,2\pi]$, the two Bloch functions given by Proposition \ref{prop-Blochsym} below, there results that
\begin{equation}\label{eq:c}
 c_\sharp:=2i \langle \partial_x\Phi_-(\cdot,\pi),\Phi_-(\cdot,\pi)\rangle_{L^2([0,1])}=-2i \langle \partial_x\Phi_+(\cdot,\pi),\Phi_+(\cdot,\pi)\rangle_{L^2([0,1])}\in\R\setminus\{0\}
\end{equation}
(see Remark \ref{rem-csciarp} below). Finally, such $V$ makes $\mathcal{H}$ self-adjoint on $L^2(\R)$ with domain $H^2(\R)$.

\begin{remark}
\label{rem-diracexistence}
The existence of potentials satisfying Assumption \ref{itm:1} is guaranteed by the results established in \cite[Section 3 $\&$ Appendices C\&D]{FLW17}.
\end{remark}

\begin{assumption}[Assumptions on $W$]
\label{itm:3}
$W \in C^\infty (\R)$, is real--valued and there exists a real sequence $(W_m)_{m\in2\N+1}\in\ell^2$ such that
\begin{equation}
\label{WF}
W(x) = \sum_{m \in 2\N+1} W_m \cos(2\pi mx),\qquad\forall x\in\R.
\end{equation}
Moreover, $W$ satisfies
\begin{equation}
\label{eq:theta}
    \vartheta_\sharp := \langle W \Phi_+(\cdot,\pi),\Phi_-(\cdot,\pi)\rangle_{L^2([0,1])}\in\R\setminus\{0\},
    \end{equation}
    with $\Phi_\pm$ again given by Proposition \ref{prop-Blochsym}.
\end{assumption}

As a consequence of Assumption \ref{itm:3}, $W$ is even,
\[
   W(x+1)=W(x),\qquad\forall x\in\R,
\]
and
\begin{equation}
\label{eq:Wap}
   W(x+1/2)=-W(x)\,,\qquad\forall x \in\R\,.
\end{equation}
Moreover, such $W$ entails that $\mathcal{H}_\delta$ is self-adjoint on $L^2(\R)$ with domain $H^2(\R)$.

\begin{remark}
Assumption \ref{itm:3} contains the requirements on the perturbation $\delta W$ that ensure the existence of an interval of the form \eqref{eq:intgap} in the spectral gap of the operator $\mathcal{H}_\delta$ introduced in \eqref{eq-opwithgap} (see \cite[Eq. (1.14) $\&$ Proposition 4.1]{FLW17}). The existence of such potentials, with a particular reference to condition \eqref{eq:theta}, is discussed for instance in \cite[Remark 4.1 $\&$ Remark 5.1]{FLW17}.
\end{remark}

We are now in position to state our main theorems. First, we establish existence of solutions for the effective equation \eqref{NLD}, which will be the building blocks to construct Dirac solitons for \eqref{NLS}.

\begin{theorem}[Solutions to \eqref{NLD}]\label{thm:main2}
Let $c_\sharp,\,\vartheta_\sharp,\,\beta_1\in\R\setminus\{0\}$,\,$\beta_2\in\R$, with $\beta_1\geq|\beta_2|$, and let $\mu_\sharp\in(-|\vartheta_\sharp|,|\vartheta_\sharp|)$. Then, equation \eqref{NLD} admits a non--trivial smooth solution of the form 
\begin{equation}\label{eq:uvsol}
\Psi=\frac{1}{2}\begin{pmatrix}
    u+i\, v \\
    u-i\, v
\end{pmatrix}\,,
\end{equation}
with
\begin{equation}
\label{eq:uvsimm}
u,\,v:\R\to\R\qquad\text{such that}\qquad\left\{\begin{array}{ll}u\,\text{is even and}\,v\,\text{is odd} & \text{if }\,\vartheta_\sharp>0\\[.1cm]u\,\text{is odd and}\,v\,\text{is even} & \text{if }\,\vartheta_\sharp<0.\end{array}\right.
\end{equation}
Moreover, for every $k\in\N$ and every $\varepsilon\in\big(0,(\vartheta_\sharp^2-\mu_\sharp^2)/c_\sharp^2\big)$, such solution satisfies
\begin{equation}\label{eq:expdecay}
\big| \Psi^{(k)}(x)\big| \lesssim_{\,\varepsilon,k,\Psi} \exp\left(- \frac{\sqrt{\vartheta_\sharp^2-\mu^2_1-c_\sharp^2\varepsilon}}{|c_\sharp|}\, \lvert x \rvert\right) \qquad \forall x\in\R.
\end{equation}
\end{theorem}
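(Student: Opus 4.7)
\emph{Proof proposal.} The plan is to reduce \eqref{NLD} to a planar first-order ODE and identify the soliton with a homoclinic orbit to the origin in a Hamiltonian phase plane.

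\smallskip

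\noindent\textbf{ODE reduction.} Substituting the spinor ansatz $\Psi = \tfrac{1}{2}(u+iv,\,u-iv)^T$ into \eqref{NLD} and splitting the first component into real and imaginary parts (the second component is the complex conjugate of the first, so it contains no new information), one obtains the coupled first-order system
\begin{equation*}
\begin{aligned}
c_\sharp\,u' &= (\vartheta_\sharp + \mu_\sharp)\,v + \tfrac{1}{4}\big[3(\beta_1-\beta_2)\,u^2 v + (3\beta_1+\beta_2)\,v^3\big], \\
c_\sharp\,v' &= (\vartheta_\sharp - \mu_\sharp)\,u - \tfrac{1}{4}\big[(3\beta_1+\beta_2)\,u^3 + 3(\beta_1-\beta_2)\,u v^2\big],
\end{aligned}
\end{equation*}
with the boundary condition $(u,v)\to(0,0)$ as $|x|\to\infty$. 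A direct check shows that this is a Hamiltonian system $c_\sharp u' = \partial_v H$, $c_\sharp v' = -\partial_u H$, with
\[
H(u,v) = -\tfrac{\vartheta_\sharp-\mu_\sharp}{2}\,u^2 + \tfrac{\vartheta_\sharp+\mu_\sharp}{2}\,v^2 + \tfrac{3\beta_1+\beta_2}{16}\,(u^4+v^4) + \tfrac{3(\beta_1-\beta_2)}{8}\,u^2 v^2,
\]
so $H$ is constant along orbits and any solution decaying at infinity must lie on $\{H=0\}$.

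\smallskip

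\noindent\textbf{Phase-plane analysis.} The hypothesis $\beta_1 \geq |\beta_2|$ together with $\beta_1\neq 0$ forces $\beta_1>0$, whence $3\beta_1+\beta_2>0$ and $3(\beta_1-\beta_2)\geq 0$. Thus the quartic part of $H$ is a positive definite form on $\R^2$, and $\{H=0\}$ is compact. Since $|\mu_\sharp|<|\vartheta_\sharp|$, the quadratic part of $H$ is sign-indefinite, so the origin is a hyperbolic saddle of the flow, with linearization eigenvalues $\pm\lambda$ and $\lambda=\sqrt{\vartheta_\sharp^2-\mu_\sharp^2}/|c_\sharp|$. A direct check shows that all other critical points of $H$ lie on the coordinate axes and at a strictly negative value of $H$; hence $\{H=0\}$ is a regular curve away from the origin. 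Writing $H(u,v)=0$ as a quadratic in $v^2$ for fixed $u^2\in[0,u_0^2]$ (with $u_0^2=8(\vartheta_\sharp-\mu_\sharp)/(3\beta_1+\beta_2)$ when $\vartheta_\sharp>0$) shows that $\{H=0\}\cap\{u\geq 0\}$ is a simple closed loop through the origin and $(u_0,0)$, tangent at the origin to the lines $v=\pm\sqrt{(\vartheta_\sharp-\mu_\sharp)/(\vartheta_\sharp+\mu_\sharp)}\,u$, which are precisely the stable and unstable directions of the saddle. Traversed as $x$ runs over $\R$, this loop is a homoclinic orbit to $(0,0)$. The case $\vartheta_\sharp<0$ is analogous, with the roles of $u$ and $v$ exchanged and the loop crossing the $v$-axis at a point $(0,v_0)$.

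\smallskip

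\noindent\textbf{Parity, regularity, and decay.} The parities asserted in \eqref{eq:uvsimm} follow from uniqueness for the ODE: when $\vartheta_\sharp>0$ the system is invariant under $(u(x),v(x))\mapsto(u(-x),-v(-x))$, so initializing the homoclinic at the crossing $(u_0,0)$ of the $u$-axis forces $u$ even and $v$ odd; the argument for $\vartheta_\sharp<0$ is symmetric. Smoothness of $(u,v)$, and hence of $\Psi$, is immediate from the polynomial right-hand side of the ODE and classical bootstrapping. The estimate \eqref{eq:expdecay} with $k=0$ is the content of the stable-manifold theorem at the hyperbolic saddle, which yields decay at any rate strictly less than $\lambda$: the quantity $\sqrt{\vartheta_\sharp^2-\mu_\sharp^2-c_\sharp^2\varepsilon}/|c_\sharp|$ is precisely $\lambda$ minus a small correction parametrised by $\varepsilon\in(0,(\vartheta_\sharp^2-\mu_\sharp^2)/c_\sharp^2)$. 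For $k\geq 1$, the bound propagates by differentiating the ODE, which expresses $\Psi^{(k)}$ as a polynomial in $\Psi$ and its lower-order derivatives. The principal obstacle is the global phase-plane analysis leading to the homoclinic loop: the hypothesis $\beta_1\geq|\beta_2|$ is precisely what guarantees both the positive-definiteness of the quartic part of $H$ and the absence of critical points at level zero other than the saddle at the origin, ensuring that $\{H=0\}$ has the expected figure-eight geometry.
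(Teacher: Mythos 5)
Your proposal is correct and follows the same architecture as the paper's proof: the same spinor ansatz reduces \eqref{NLD} to the same planar Hamiltonian system (your $H$ coincides with the paper's \eqref{eq:Ham} after the normalization $c_\sharp=\vartheta_\sharp=1$), the soliton is the homoclinic orbit on the zero level set through $(u_0,0)$, and the parities follow from the reflection symmetry of the Cauchy problem, exactly as in Section \ref{sec:limiteq}. The two places where your route genuinely differs are worth noting. For convergence to the origin, the paper invokes the Poincar\'e--Bendixson theorem and rules out limit cycles via the strict monotonicity of the angle $\theta=\arctan(v/u)$ along the orbit, whereas you read the homoclinic behaviour directly off the geometry of the level set (a simple closed loop whose only equilibrium is the saddle at the origin); this is cleaner, but the sentence ``traversed as $x$ runs over $\R$, this loop is a homoclinic orbit'' hides the one step that still needs saying, namely that an orbit confined to the punctured loop (an arc homeomorphic to an interval) cannot be periodic and cannot accumulate at an interior point of the arc without that point being an equilibrium, so it must escape to both endpoints, i.e.\ to the origin. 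For the decay estimate, the paper derives a second--order equation $-u''+\big((\vartheta_\sharp^2-\mu_\sharp^2)/c_\sharp^2\big)u=Gu$ and applies a comparison principle, while you invoke the stable/unstable manifold theorem at the hyperbolic saddle; both yield the stated rate, and the propagation to higher derivatives by differentiating the ODE is the same in both arguments.
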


\begin{remark}
    By a direct inspection of the proof of Theorem \ref{thm:main2} in Section \ref{sec:limiteq}, it is not hard to see that there are in fact two geometrically distinct solutions to \eqref{NLD} of the form \eqref{eq:uvsol}$\&$\eqref{eq:uvsimm}. To see this, note that when $\vartheta_\sharp>0$ the solution found in Theorem \ref{thm:main2} corresponds in Figure \ref{fig1} to a parametrization of the right branch of the curve in the half--plane $\{u>0\}$ in the phase space; the other curve, whose trace is in the half--plane $\{u<0\}$, gives rise to the other solution. In the case $\vartheta_\sharp<0$ an analogous argument may be carried on replacing $u$ with $v$ .
\end{remark}

\begin{remark}
Note that \eqref{eq:uvsol}$\&$\eqref{eq:uvsimm} entail that any solution $\Psi$ obtained in Theorem \ref{thm:main2} satisfies
\begin{equation}
\label{eq:psisimm}
\overline{\Psi}=\sigma_1\Psi\qquad\text{and}\qquad\Psi(-x)=\left\{\begin{array}{ll}\sigma_1\Psi(x) & \text{if }\,\vartheta_\sharp>0\\[.1cm]-\sigma_1\Psi(x) & \text{if }\,\vartheta_\sharp<0.\end{array}\right.
\end{equation}
\end{remark}

The existence part of Theorem \ref{thm:main2} is not surprising, as we fix $\mu_\sharp$ in the spectral gap of the operator $\mathcal{D}$ defined in \eqref{eq-diracop} and accounting for the linear part of equation \eqref{NLD}. The same can be said for the established regularity and the decay estimates. On the other hand, the form provided by \eqref{eq:uvsol}, although it is a natural consequence of the dynamical system strategy used for the proof (and based on \cite{B22}), is somehow uncommon and crucial for the proof of the following result.

\begin{theorem}[Dirac solitons for \eqref{NLS}]\label{thm:main1}
    Let $V$ and $W$ satisfy Assumptions \ref{itm:1} and \ref{itm:3} and let $(\pi,\mu_*)$ be a Dirac point for the operator $\mathcal{H}$ defined by \eqref{eq:H}. Let $\Phi_\pm(\cdot,k)$, $k\in[0,2\pi]$, be the two Bloch waves given by Proposition \ref{prop-Blochsym} below. Moreover, let $c_\sharp,\,\vartheta_\sharp\in\R\setminus\{0\}$ be the two parameters given by \eqref{eq:c} and \eqref{eq:theta}, respectively, and
    \begin{gather}
    \label{eq:beta1}
    \beta_1 := \int_0^1 \left| \Phi_+(x,\pi) \right|^2 \left| \Phi_-(x,\pi) \right|^2 dx,\\[.2cm]
    \label{eq:beta2}
    \beta_2:=\int^1_0\overline{\Phi_+}^2(x,\pi)\Phi_-^2(x,\pi)\,dx.
    \end{gather}
    Finally, let
    \begin{equation}
    \label{eq-musciarp}
     \mu_\sharp\in(-a\vert \vartheta_\sharp\vert,a\vert \vartheta_\sharp\vert)\qquad\text{for some}\quad a\in(0,1),
    \end{equation}
    and denote by $\mathcal{I}_\delta$ the interval defined by \eqref{eq:intgap}. Then, there exist $\delta_0>0$ and a continuous map
    \[
    (0,\delta_0)\ni\delta\mapsto (\mu_\delta, u_\delta)\in \mathcal I_\delta\times H^2(\R)\,,
    \]
    such that $\mu_\delta$ is given by \eqref{eq:muas} and $u_\delta$ is a non--trivial real--valued Dirac soliton, i.e. a non--trivial real--valued $H^2(\R)$--solution of \eqref{NLS} satisfying
    \begin{equation}
    \label{eq:uapp}
    \left\| u_\delta-\sqrt{\delta}\big(\Psi_-(\delta\,\cdot)\,\Phi_-(\cdot,\pi)+\Psi_+(\delta\,\cdot)\,\Phi_+(\cdot,\pi)\big)\right\|_{H^2(\R)} \lesssim \delta, \qquad \forall\delta\in(0,\delta_0),
    \end{equation}
    where $\Psi:=(\Psi_-,\Psi_+)^T$ is a non--trivial solution of \eqref{NLD} provided by Theorem \ref{thm:main2}.
\end{theorem}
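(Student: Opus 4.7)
The overall strategy is a Lyapunov--Schmidt reduction built on the Floquet--Bloch decomposition of $\mathcal{H}_\delta$. I would substitute the ansatz
\[
u_\delta(x)=\sqrt{\delta}\big(\Psi_-(\delta x)\Phi_-(x,\pi)+\Psi_+(\delta x)\Phi_+(x,\pi)+r_\delta(x)\big)
\]
into \eqref{NLS} and, passing to the slow variable $y=\delta x$, expand both the linear part $(\mathcal{H}_\delta-\mu_\delta)$ and the cubic nonlinearity in powers of $\delta$. The linear action on the modulated Bloch profile produces, at the lowest order $\delta^{3/2}$, precisely the combinations $ic_\sharp\sigma_3\partial_y\Psi+\vartheta_\sharp\sigma_1\Psi-\mu_\sharp\Psi$ (the former from expanding $-\partial_x^2$ around the conical tangency at $(k_*,\mu_*)$ and using \eqref{eq:c}, the latter from $\delta W$ together with \eqref{eq:theta}). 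Simultaneously, the cubic term $|u_\delta|^2 u_\delta$ produces slow-modulated contributions with rapidly oscillating coefficients whose projections onto $\Phi_\pm(\cdot,\pi)$ (over a period cell) give exactly $\mathcal{G}_{\beta_1,\beta_2}(\Psi)$, with $\beta_1,\beta_2$ as in \eqref{eq:beta1}--\eqref{eq:beta2}; the matching is possible because Theorem \ref{thm:main2} furnishes $\Psi$ solving \eqref{NLD}, so the whole $O(\delta^{3/2})$ resonant content cancels and the remainder $r_\delta$ is forced to solve an equation with right--hand side of order $\delta^{3/2}$ (or better).

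The core of the argument is the uniform--in--$\delta$ invertibility of $(\mathcal{H}_\delta-\mu_\delta)$ on a suitable complement of the resonant subspace. The plan is to use Floquet--Bloch to write $L^2(\R)$ as a direct integral over $k\in[0,2\pi]$ of the fibers of $\mathcal{H}_\delta$, and to split the remainder $r_\delta$ into a \emph{near--resonance} part supported at quasi--momenta in a small neighbourhood of $k=\pi$ (and energies near $\mu_*$) and a \emph{far--from--resonance} part. On the far part the spectral distance between $\mu_\delta\in\mathcal{I}_\delta$ and the rest of the spectrum of $\mathcal{H}_\delta$ is bounded below independently of $\delta$, so inversion is trivial with norm $O(1)$. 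On the near part the two--band effective operator is, after rescaling $k-\pi=\delta\eta$, a small perturbation of the Dirac symbol $c_\sharp\sigma_3\eta+\vartheta_\sharp\sigma_1$, whose spectrum avoids $\mu_\sharp$ thanks to the hypothesis $\mu_\sharp\in(-a|\vartheta_\sharp|,a|\vartheta_\sharp|)$ with $a<1$; a Neumann series argument therefore yields an inverse whose norm is $O(\delta^{-1})$ once one imposes orthogonality to the resonant modes. Combining the two estimates, the full linearized operator is invertible with norm $O(\delta^{-1})$ on the appropriate complement.

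With this inversion at hand, the equation for $r_\delta$ is recast as a fixed--point problem
\[
r_\delta=(\mathcal{H}_\delta-\mu_\delta)^{-1}\big[F_\delta^{(0)}+\sqrt{\delta}\,F_\delta^{(1)}(r_\delta)+\delta\,F_\delta^{(2)}(r_\delta,r_\delta)+\delta^{3/2}F_\delta^{(3)}(r_\delta,r_\delta,r_\delta)\big],
\]
where $F_\delta^{(0)}$ collects the residual of the ansatz and, thanks to \eqref{NLD} together with the exponential decay \eqref{eq:expdecay} of $\Psi$, has $H^2$--norm bounded by $\delta^{3/2}$ (or $\delta^{5/2}$ after absorbing the $\delta^{-1}$ loss in the inverse). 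A contraction mapping argument in a ball of radius $O(\delta)$ in $H^2(\R)$ then produces a unique $r_\delta$ and hence $u_\delta$, with the estimate \eqref{eq:uapp}; continuous dependence on $\delta$ follows from the uniformity of the contraction. Real--valuedness of $u_\delta$ is ensured by working throughout in the subspace of $H^2(\R)$ selected by the symmetries $\overline{\Psi}=\sigma_1\Psi$ and $\overline{\Phi_-(\cdot,\pi)}=\Phi_+(\cdot,\pi)$ inherited from Proposition \ref{prop-Blochsym} and \eqref{eq:psisimm}; this symmetry is preserved by the linearized operator and by the cubic nonlinearity, so the fixed--point iteration stays within it. The main obstacle I anticipate is precisely the careful bookkeeping in the near--resonance block: one must check that after projecting out the resonant Bloch modes at $k=\pi$ the reduced fiber operator is indeed uniformly bounded below by $c\delta$ on the orthogonal complement, which hinges on the non--degeneracy of the Dirac cone (the coefficient $c_\sharp\neq 0$) and on the strict inequality $a<1$ in \eqref{eq-musciarp}; everything else is a multi--scale perturbation argument modeled on the linear construction of \cite{FLW17,DFW20} adapted to the nonlinear stationary setting.
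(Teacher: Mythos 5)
Your overall architecture (multiscale ansatz, Floquet--Bloch near/far splitting, $O(1)$ inversion on the far block, $O(\delta^{-1})$ inversion on the near block, contraction for the remainder) matches the paper's. But there is a genuine gap in your treatment of the near--resonance block. After rescaling $k-\pi=\delta\xi$ and dividing out the common factor $\delta$, the linear--in--$r_\delta$ term $3U_0^2\,r_\delta$ coming from expanding $|u_\delta|^2u_\delta$ around the leading profile does \emph{not} enter as a small perturbation or as a $\sqrt{\delta}\,F^{(1)}_\delta(r_\delta)$ forcing: its projection onto $\Phi_\pm(\cdot,\pi)$ contributes at exactly the same order as the Dirac symbol $c_\sharp\sigma_3\xi+\vartheta_\sharp\sigma_1-\mu_\sharp$ (this is the content of \eqref{eq:L_delta} and Lemma \ref{lem:L^1_delta}). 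Consequently the operator you must invert on the near block is not the free Dirac operator in its spectral gap but the \emph{linearization of the NLD equation at the soliton} $\Psi$, namely $\mathcal D_0$ in \eqref{eq:D}. If you instead keep $3U_0^2 r_\delta$ on the right--hand side, the contraction does not close: the inverse costs $\delta^{-1}$ on the near block while that term is only $O(\delta)$ times an $O(1)$ multiplication operator, so the resulting Lipschitz constant is $O(1)$, not small.

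This matters because $\mathcal D_0$ has a nontrivial kernel: by translation invariance of \eqref{NLD}, $\Psi'$ solves $\mathcal D_0\Psi'=0$ (Remark \ref{rem:eqPsi'}), so no Neumann series and no ``orthogonality to the resonant Bloch modes at $k=\pi$'' (which is a different, fiber--wise condition) can produce a bounded inverse on the full near subspace. The paper's resolution --- and the reason Theorem \ref{thm:main2} insists on the specific symmetric form \eqref{eq:uvsol}\&\eqref{eq:uvsimm} --- is to restrict the whole fixed--point scheme to a parity--and--conjugation symmetry class in which $\Psi'$ does not lie (since $\Psi'(-x)=-\sigma_1\Psi'(x)$ when $\vartheta_\sharp>0$, by \eqref{eq:psisimm}); a Wronskian argument then shows $\mathcal D_0$ has trivial kernel there (Proposition \ref{prop:kernelD}). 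Your symmetry reduction is invoked only to guarantee real--valuedness, not to remove this kernel, so as written your near--block inversion step would fail. Everything else in your outline (the derivation of the effective NLD via the solvability condition, the $\beta_1,\beta_2$ coefficients, the far--block contraction, the final estimate \eqref{eq:uapp}) is consistent with the paper.
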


\begin{remark}
\label{rem-teorel}
It is straightforward to see that parameters $c_\sharp,\,\vartheta_\sharp,\,\beta_1,\,\mu_\sharp$ set in Theorem \ref{thm:main1} satisfy the assumptions of Theorem \ref{thm:main2}. Moreover, Remark \ref{rem-beta2real} below shows that $\beta_2\in\R$, thus granting the existence of the solution of \eqref{NLD} required to construct the aimed Dirac soliton.
\end{remark}

As already mentioned, Theorem \ref{thm:main1} can be considered a nonlinear analogue of \cite[Theorem 5.1]{FLW17}). In fact, the strategy of the proof is strongly inspired by \cite{FLW17} and consists of making the ansatz \eqref{eq:uform} on the form of $u_\delta$ and detecting the modulating coefficients $\Psi_\pm$ and the correction term $r_\delta$ for this to be a solution. More precisely, this requires first to study the effective equation satisfied by $\Psi$ and then to find a suitably small solution to the equation of the correction term. Concerning the former issue, a crucial step in our argument is to construct specific solutions enjoying the symmetries obtained in  Theorem \ref{thm:main2}. Together with the choice of a family of Bloch waves as in Proposition \ref{prop-Blochsym} below, this guarantees that the leading order term $\Psi_-(\delta\,\cdot)\,\Phi_-(\cdot,\pi)+\Psi_+(\delta\,\cdot)\,\Phi_+(\cdot,\pi)$ is real--valued and even/odd, depending on the sign of $\vartheta_\sharp$ (see \eqref{eq:psisimm}). Such features have a twofold consequence on the study of the correction term: they strongly simplify the explicit form of its equation, and allow to search for a solution $r_\delta$ with the same symmetries. In particular, this latter fact enables us to invert the linearization of \eqref{NLD} on a suitable subspace of its domain, which is crucial to conclude the fixed-point argument that lies at the very heart of the paper.

Finally, note that the operator $\mathcal{H}$ defined in \eqref{eq:H} may have more than a single Dirac point (and this is in fact the case as showed by \cite{FLW17}), but necessarily on the same line $k=\pi$ in the quasi--momentum/energy plane (see, e.g., \cite[Theorem XIII.89]{RS81}). However, this does not affect the proof of Theorem \ref{thm:main1}, as we can repeat the same argument at the energy levels of any other Dirac point of $\mathcal{H}$.


\subsection{Organization of the paper}

The paper is organized as follows:

\begin{enumerate}[label=(\roman*)]
\item in Section \ref{sec:fb} we recall briefly the main tools on periodic Schr\"odinger operators (and Floquet--Bloch theory) in dimension one which are necessary in our paper, and prove that our choice for the Bloch waves is consistent;
\item in Section \ref{sec:limiteq} we prove Theorem \ref{thm:main2};
\item in Section \ref{sec:der_NLD} we explain the reasons why we search for solutions of the form \eqref{eq:uform} in Theorem \ref{thm:main1} and also show the structure the correction term $r_\delta$ must have in order to get a solution of \eqref{NLS};
\item in Section \ref{sec:proofmain2} we prove Theorem \ref{thm:main1}.
\end{enumerate}


 \section{Features of periodic Schr\"odinger operators in dimension one}
 \label{sec:fb}
 
In this section, for the convenience of the reader, we collect some features of one-dimensional periodic Schr\"odinger operators which are widely used throughout, with particular attention to the framework set in \cite{FLW17} where Dirac points appear in the dispersion bands. A more general discussion (in any dimension) can be found e.g. in \cite{RS81}. Here we limit ourselves to mention only what is needed for our purposes, referring to \cite{FLW17, RS81} for further details. 

From now on we let $V$ be a potential satisfying Assumption \ref{itm:1} and $\mathcal{H}$ be the associated self-adjoint operator on $L^2(\R)$, with domain $H^2(\R)$, defined by \eqref{eq:H}. Moreover, since the potential $V$ is 1--periodic, it is natural to consider the lattice $\mathbb Z$ with fundamental cell $[0,1]$ in physical space, and its dual $2\pi \mathbb Z$ with fundamental cell $[0,2\pi]$ (the \emph{Brillouin zone}) in the frequency domain.


\subsection{Preliminaries and specific notation}
\label{subsec-preliminaries}

We start recalling the following definitions.

\begin{definition}[Pseudoperiodic Lebesgue/Sobolev spaces]
\label{def-lebsobper}
Given $k\in[0,2\pi]$, we define the space of $k$--pseudoperiodic $L^2$ functions as the set
\begin{equation}\label{eq:L2pp}
L^2_k(\R):=\left\{f \in L^2_{\mathrm{loc}}(\R): f(x+1)= e^{ik} f(x),\:\forall x\in\R\right\},
\end{equation}
endowed with inner product $\langle f,g\rangle_{L^2([0,1])}$ (and the associated norm). The Sobolev spaces $H^s_k(\R)$ are then defined in the natural way, for all $s\in\mathbb N$.
\end{definition}

\begin{remark}
\label{rem-fkdep}
Note that in principle, by Definition \ref{def-lebsobper}, functions in $L_k^2(\R)$ depend on $k$. When needed, we will make this dependence explicit, using e.g. $f(\cdot,k)$, to prevent ambiguities.
\end{remark}

\begin{remark}
\label{rem:frompseudotoper}
Definition \ref{def-lebsobper} implies that
\begin{equation}
\label{eq:frompseudotoper}
f\in L_k^2(\R)\qquad\Leftrightarrow\qquad f(x)=e^{ikx}P(x)\quad\text{with}\quad P\in L^2_{\mathrm{loc}}(\R)\quad\text{and}\quad\text{$1$--periodic}.
\end{equation}
Moreover, in view of Remark \ref{rem-fkdep}, the function $P$ may depend directly on $k$ too and thus may be denoted, when necessary, by $P(\cdot,k)$.
\end{remark}

\begin{definition}[Even/odd--index 1--periodic Fourier series]
We say that $P_\ee$ is an \emph{even--index 1--periodic Fourier series} if it is of the form
\begin{equation}
\label{eq-even}
P_\ee(x)=\sum_{m\in2\Z}p(m)\,e^{2\pi imx},\qquad\text{with}\qquad\sum_{m\in2\Z}|p(m)|^2<+\infty,
\end{equation}
while we say that $P_\oo$ is an \emph{odd--index 1--periodic Fourier series} if it is of the form
\begin{equation}
\label{eq-odd}
P_\oo(x)=\sum_{m\in2\Z+1}p(m)\,e^{2\pi imx},\qquad\text{with}\qquad\sum_{m\in2\Z+1}|p(m)|^2<+\infty.
\end{equation}
\end{definition}

\begin{definition}[Even/odd--index subspaces of $L_k^2(\R),\,H_k^s(\R)$]
\label{def:perspaces}
For any fixed $k\in [0,2\pi]$ we define the following subspaces of $L_k^2(\R)$:
\begin{gather}
\label{eq:L2ppe}
L_{k,\ee}^2(\R):=\left\{f\in L_k^2(\R): f(x)=e^{ikx}P_\ee(x),\,\text{for some } P_\ee\:\text{ as in \eqref{eq-even}}\right\}\\[.2cm]
\label{eq:L2ppo}
L_{k,\oo}^2(\R):=\left\{f\in L_k^2(\R): f(x)=e^{ikx}P_\oo(x),\,\text{for some } P_\oo\:\text{ as in \eqref{eq-odd}}\right\}
\end{gather}
endowed with the inner product of $L_k^2(\R)$. The Sobolev spaces $H^s_{k,\ee}(\R),\,H^s_{k,\oo}(\R)$ are then defined in the natural way, for all $s\in\mathbb N$. Finally, for the sake of simplicity, we set $L_{\ee}^2(\R):=L_{0,\ee}^2(\R)$, $L_{\oo}^2(\R):=L_{0,\oo}^2(\R)$, $H_{\ee}^s(\R):=H_{0,\ee}^s(\R)$ and $H_{\oo}^s(\R):=H_{0,\oo}^s(\R)$.
\end{definition}

\begin{remark}
By Assumptions \ref{itm:1}-\ref{itm:3}, $V\in L_\ee^2(\R)$ and $W\in L_\oo^2(\R)$. Moreover, it is easy to check that, for all $k\in[0,2\pi]$ and all $s\in\N_+$,
\[
L_k^2(\R)=L_{k,\ee}^2(\R)\oplus L_{k,\oo}^2(\R),\qquad\text{and}\qquad H_k^s(\R)=H_{k,\ee}^s(\R)\oplus H_{k,\oo}^s(\R).
\]
\end{remark}

 
 \subsection{Fundamentals of Floquet--Bloch theory}
\label{subsec-fundamentals}
 
We sketch here the main tools of Floquet--Bloch theory applied to the operator $\mathcal{H}$. First, for fixed $k\in[0,2\pi]$, denote by $\mathcal{H}(k)$ the operator on $L_k^2(\R)$ with the same action of $\mathcal{H}$ and domain $H_k^2(\R)$. One can check that it is self-adjoint and has compact resolvent. As a consequence, the associated eigenvalue problem, usually called \emph{Floquet--Bloch eigenvalue problem}, i.e.
 
\begin{equation}
\label{eq:FB}
\begin{cases}
\mathcal H(k)\,\Phi(x,k)=\mu(k)\,\Phi(x,k) \\[.2cm]
\Phi(x+1,k)=e^{ik}\Phi(x,k)\,,
\end{cases}
\end{equation}
originates a purely discrete spectrum for $\mathcal{H}(k)$, and its eigenvalues listed with repetitions are
\begin{equation}\label{eq:incev}
\mu_1(k)\leq \mu_2(k)\leq \ldots \mu_n(k)\leq\ldots\,.
\end{equation}
The functions
\begin{equation}\label{eq:bands}
[0,2\pi] \ni k \to \mu_n(k)\in\R\,,\qquad n\in\mathbb N_+,
\end{equation}
are usually called \emph{dispersion bands (or curves)} of the operator $\mathcal H$, and
\begin{equation}
\label{eq-genbloch}
\boxed{\text{the corresponding normalized eigenfunctions }\:\Phi_n(x,k)\:\text{ are called \emph{Bloch waves}.}}
\end{equation}
The main property of the dispersion bands is that they  provide the spectrum of $\mathcal{H}$ as follows (see, e.g., \cite[Section XIII.16]{RS81}):
\[
\sigma(\mathcal H)=\sigma_{\mathrm{a.c.}}(\mathcal H)=\bigcup_{n\in\N_+}\mu_n([0,2\pi])\,.
\]
Moreover, other relevant properties of the dispersion bands are (for details see \cite[Section 2]{FLW17} and again \cite[Section XIII.16]{RS81}):
\begin{enumerate}[label=\alph*)]
\item $\mu_n(k)=\mu_n(2\pi-k)$, for every $k\in[0,\pi]$ and every $n\in\N_+$;
\item $\mu_n(\cdot)$ is Lipschitz continuous on $[0,\pi]$, analytic on $(0,\pi)\cup(\pi,2\pi)$ and monotone on $[0,\pi]$ and on $[\pi,2\pi]$ (with different monotonicity on the two intervals) for every $n\in \N_+$;
\item $\mu_1(k)< \mu_2(k)< \ldots \mu_n(k)<\ldots$, for every $k\in[0,\pi)\cup(\pi,2\pi]$.
\end{enumerate}

\begin{remark}
Note that the previous features imply that any $\mu_n(k)$ is a simple eigenvalue for every $k\in[0,\pi)\cup(\pi,2\pi]$. On the other hand, the point $k=\pi$ where two bands may touch each other (and thus the eigenvalue may be multiple) is the sole point where analyticity of the dispersion bands may fail. As we mention below, this is in fact the case when a Dirac point arises. Moreover, since \eqref{eq:FB} is a second order one--dimensional ODE, at $k=\pi$ the multiplicity of an eigenvalue at a contact point is at most two. Thus, the collapse of more than two dispersion bands may never occur.
\end{remark}

In addition, (again from \cite[Section 2]{FLW17}) it is also well-known that, for any fixed $k\in[0,2\pi]$, $\big(\Phi_n(\cdot,k)\big)_{n\in\N_+}$ is an orthonormal basis of $L_k^2(\R)$ and that the family of Bloch waves
\begin{equation}
\label{eq-compl1}
\left\{\Phi_n(\cdot,k)\,:\, n\in\N_+,\, k\in[0,2\pi]\right\}
\end{equation}
is complete in $L^2(\R)$, in the sense that for every $g\in L^2(\R)$ there results
\begin{equation}
\label{eq:L2dec}
g(x)=\frac{1}{2\pi}\sum_{n\in\N_+}\int_0^{2\pi} \widetilde{g}_n(k)\Phi_n(x,k)\,dk
\end{equation}
where $\widetilde{g}_n$ are the so called \emph{Bloch coefficients} of $g$, i.e. those functions in $L^2([0,2\pi])$ such that
\begin{equation}
\label{eq-blochcoeff}
 \widetilde{g}_n(k):=\langle g,\Phi_n(\cdot,k)\rangle_{L^2(\R)},\qquad\forall n\in\N_+\,.
\end{equation}
Moreover, the following Parseval identity holds
\begin{equation}
\label{eq-parseval}
\Vert g\Vert^2_{L^2(\R)}=\frac{1}{(2\pi)^2}\sum_{n\in\N_+}\int_0^{2\pi} \left\vert \widetilde{g}_n(k)\right\vert^2\,dk.
\end{equation}
Finally, in view of the Weyl asymptotics, i.e.
\begin{equation}
\label{eq-weylas}
C_1 n^2\leq \mu_n(k)\leq C_2 n^2, \qquad\forall k\in[0,2\pi],\quad \forall n\in\N_+\,,
\end{equation}
for some constants $C_1,C_2>0$, one also sees that, for every $s\in\N$, 
\begin{align}
\label{eq:sobeq}
\Vert g\Vert^2_{H^s(\R)}  \sim \int_0^{2\pi} \sum_{n\in\N_+}(1+n^2)^s \left\vert \widetilde{g}_n(k)\right\vert^2\,dk\,.
\end{align}

\begin{remark}
\label{rem-blochcoeff}
As pointed out in \cite[Remark 2.1]{FLW17}, the above definition of the Bloch coefficients $\widetilde{g}_n$ is rigorous provided that the Bloch waves $\Phi_n$ satisfy, for every $n\in\N_+$,
\begin{equation}
\label{eq-roughreg1}
\Phi_n\in L^2([a,b]\times[0,2\pi]),\qquad\forall a,b\in\R,\,a<b,
\end{equation}
and
\begin{equation}
\label{eq-roughreg2}
G_n^{a,b}(\cdot):=\int_a^b\Phi_n(x,\cdot)g(x)\,dx\longrightarrow G_n(\cdot)\quad\text{in}\quad L^2([0,2\pi]),\quad\text{as }a\to-\infty,\:b\to+\infty,
\end{equation}
for every $g\in L^2(\R)$. However, it is not difficult to find choices for which such requirements are met (use, e.g., \cite[Theorem XIII.89 -- item (f)]{RS81}).
\end{remark}

\begin{remark}
\label{rem-pseudotoper}
Note also that, for every $k\in[0,2\pi]$, $\big(\Phi_n(\cdot,k),\mu_n(k)\big)_{n\in\N_+}$ are eigenpairs in $L_k^2(\R)\times\R$ of \eqref{eq:FB} if and only if $\big(p_n(\cdot,k),\mu_n(k)\big)_{n\in\N_+}$, with $p_n(x,k):=e^{-ikx}\,\Phi_n(x,k)$, are eigenpairs in $L_0^2(\R)\times\R$ of
\begin{equation}
\label{eq:FBper}
\begin{cases}
\mathcal -(\partial_x+ik)^2\,p(x,k)=\mu(k)\,p(x,k) \\[.2cm]
p(x+1,k)=p(x,k)\,.
\end{cases}
\end{equation}
Moreover, for any fixed $k\in[0,2\pi]$, $\big(p_n(\cdot,k)\big)_{n\in\N_+}$ is an orthonormal basis of $L_{0}^2(\R)$.
\end{remark}


\subsection{Dirac points of \texorpdfstring{$\mathcal{H}$}{H}}
\label{subsec-Diracpoints}

In Remark \ref{rem-diracexistence}, we recalled that the existence of potentials $V$ satisfying Assumption \ref{itm:1} is guaranteed by \cite[Theorem 3.8 $\&$ Proposition D.1]{FLW17}. In particular, such results ensure that potentials giving rise to \emph{Dirac points} for the operator $\mathcal H$ do exist, and that these points are of the form $(\pi,\mu_*)$ in the quasimomentum/energy plane.

For the sake of brevity, we do not mention here neither the general definition of Dirac point (or \emph{linear band crossing of Dirac type}) given in \cite[Definition 3.1]{FLW17}, nor the precise results proving their existence mentioned before. In our context, we simply recall that a point $(\pi,\mu_*)$, with $\mu_*\in\R$, is a Dirac point for the operator $\mathcal{H}$ if the following holds:
\begin{enumerate}[label={\bf (\Roman*)}]
\item there exists $n^*\in\N$ such that $\mu_*=\mu_{n^*}(\pi)=\mu_{n^*+1}(\pi)$;
\item $\mu_*$ is an $L_\pi^2(\R)$ double eigenvalue for the operator $\mathcal{H}(\pi)$;
\item the domain of $\mathcal{H}(\pi)$, i.e. $H_\pi^2(\R)$, splits as $H_\pi^2(\R)=H_{\pi,\ee}^2(\R)\oplus H_{\pi,\oo}^2(\R)$, with $\mathcal{H}(\pi):H_{\pi,\ee}^2(\R)\to L_{\pi,\ee}^2(\R)$ and $\mathcal{H}(\pi):H_{\pi,\oo}^2(\R)\to L_{\pi,\oo}^2(\R)$;
\item the \emph{inversion operator} $\mathcal{I}$, defined as $\mathcal{I}(g)(x):=g(-x)$, is such that $\mathcal{I}: H_{\pi,\ee}^2(\R)\to H_{\pi,\oo}^2(\R)$, $\mathcal{I}: H_{\pi,\oo}^2(\R)\to H_{\pi,\ee}^2(\R)$, $\mathcal{I}\circ \mathcal{I} = \mathbb{I}$ and it commutes with $e^{-i\pi x} \mathcal{H}(\pi) e^{i\pi x}$; 
\item there exist two functions
\begin{equation}
\label{eq-kergenerators}
g_1\in L_{\pi,\ee}^2(\R)\qquad\text{and}\qquad g_2:=\mathcal{I}(g_1)\in L_{\pi,\oo}^2(\R)
\end{equation}
such that
\begin{equation}
\label{eq-ker}
\langle g_a,g_b\rangle_{L^2([0,1])}=\delta_{ab},\quad a,b=1,2,\quad\text{and}\quad \mathrm{ker}(\mathcal H(\pi)-\mu_*)=\mathrm{span}\{g_1,g_2\},
\end{equation}
with $\mathcal H(\pi)-\mu_*:H_\pi^2(\R)\subset L_\pi^2(\R)\to L_\pi^2(\R)$;
\item there are numbers $c_\sharp\in\R\setminus\{0\}$ and $\varepsilon_0>0$ and there exist, for every $k\in[0,2\pi]$, \begin{equation}
\label{eq-neweigen}
\boxed{\text{two (normalized) eigenpairs $(\Phi_-(\cdot,k),\mu_-(k)),\,(\Phi_+(\cdot,k),\mu_+(k))$  of \eqref{eq:FB}}}
\end{equation}
such that
\begin{equation}
\label{eq:linband}
\mu_\pm(k)-\mu_*=\pm c_\sharp(k-\pi)\left(1+r_\pm(k-\pi) \right),\qquad\forall k\in(\pi-\varepsilon_0,\pi+\varepsilon_0),
\end{equation}
with $r_\pm$ two smooth functions, defined on $(\pi-\varepsilon_0,\pi+\varepsilon_0)$, such that $r_\pm(0)=0$.
\end{enumerate}

\smallskip
Some comments are in order. First, we recall that in general the dispersion bands $\big(\mu_n(\cdot)\big)_{n\in\N_+}$ are only Lipschitz continuous on $[0,2\pi]$, since their monotonicity and ordering prevent higher regularity at Dirac points (as showed by the fact that $c_\sharp\neq0$ in \eqref{eq:linband}). However, \eqref{eq-neweigen} and \eqref{eq:linband} show that, in one space dimension, a local smooth reparametrization of the touching bands $\mu_{n^*}(\cdot),\,\mu_{n^*+1}(\cdot)$ can be performed, since for every $k\in[0,2\pi]$ we have
\begin{equation}
\label{eq:lmeno}
\mu_{-}(k)=\left\{
\begin{array}{ll}
\displaystyle\mu_{n^*+1}(k) & k\leq\pi \\[.1cm]
\displaystyle\mu_{n^*}(k) & k\geq\pi
\end{array}
\right.
\end{equation}
and
\begin{equation}
\label{eq:lpiu}
\mu_{+}(k)=\left\{
\begin{array}{ll}
\displaystyle\mu_{n^*}(k) & k\leq\pi \\[.1cm]
\displaystyle\mu_{n^*+1}(k) & k\geq\pi\,.
\end{array}
\right.
\end{equation}
In practice, one reparametrizes the bands so that two coincident corner points become the crossing point of two locally linear functions (up to lower order smooth terms).

This smooth reparametrization also entails an improvement of the regularity in $k$ of the Bloch waves $\Phi_\pm$. Indeed, one also has that 
\begin{equation}
\label{eq-eureka}
\Phi_-(\cdot,\pi)=g_1\qquad\text{and}\qquad\Phi_+(\cdot,\pi)=g_2
\end{equation}
(see, e.g., \cite[Eq. (1.12)]{FLW17}), and, mainly, there exist two smooth functions $c_\pm$, defined on $(-\varepsilon_0,\varepsilon_0)$ and satisfying
\begin{equation}
\label{eq-asymptphi}
c_\pm(\kappa)=1+O(\kappa)\qquad\forall \kappa\in(-\varepsilon_0,+\varepsilon_0),
\end{equation}
and two smooth functions $d_\pm$, defined on $[0,1]\times(-\varepsilon_0,\varepsilon_0)$ and satisfying
\begin{equation}
\label{eq-asymptphirest}
d_\pm(x,\kappa)=O(\kappa)\qquad\forall (x,\kappa)\in[0,1]\times(-\varepsilon_0,\varepsilon_0),
\end{equation}
such that
\begin{equation}
\label{eq-smoothphi}
\Phi_\pm(x,k)=c_\pm(k-\pi)\big(\Phi_\pm(x,\pi)+d_\pm(x,k-\pi)\big),\qquad\forall (x,k)\in[0,1]\times(\pi-\varepsilon_0,\pi+\varepsilon_0)
\end{equation}
(see \cite[Eqs. (3.22) and (B.35)$\&$(B.36)]{FLW17}).

\begin{remark}
\label{rem:regP}
Consistently, the reparametrization induces an analogous smoothing effect also for the periodic functions $(p_n(\cdot,k))_{n\in\N},\,k\in[0,2\pi],$ introduced in Remark \ref{rem-pseudotoper}, in the sense that a decomposition as in \eqref{eq-smoothphi} holds also for $p_\pm(x,k):=e^{-i\pi x}\Phi_{\pm}(x,k)$  (see \cite[Eq. (3.22)]{FLW17}).
\end{remark}

In view of the previous comments, letting
\begin{equation}
\label{eq-indices}
I:=I_1\cup I_2,\qquad\text{with}\qquad I_1:=\{n\in\N_+:n\neq n^*,\,n^*+1\},\quad I_2:=\{-,+\}, 
\end{equation}
the sequence $\big(\Phi_n(\cdot,k)\big)_{n\in I}$, where $\Phi_n$'s are given by \eqref{eq-genbloch} for every $n\in I_1$ and 
\begin{equation}
\label{eq-ourbloch}
\boxed{\Phi_\pm\:\text{ are given by \eqref{eq-neweigen} and satisfy \eqref{eq-eureka}$\&$\eqref{eq-smoothphi}}}\,,
\end{equation}
is an orthonormal basis of $L_k^2(\R)$. As a consequence, the family of Bloch waves
\begin{equation}
\label{eq-compl2}
\left\{\Phi_n(\cdot,k)\,:\, n\in I,\, k\in[0,2\pi]\right\}
\end{equation}
is complete in $L^2(\R)$, as well as \eqref{eq-compl1}. In other words, \eqref{eq:L2dec}, \eqref{eq-blochcoeff}, \eqref{eq-parseval}, \eqref{eq-weylas} (with minor modifications) and \eqref{eq:sobeq} continue to hold when replacing $\N_+$ with the set of indices $I$ defined above.

\begin{remark}
\label{rem-csciarp}
The number $c_\sharp$ in expansion \eqref{eq:linband} is actually the quantity defined by \eqref{eq:c} (see \cite[Eq. (3.21) and before Remark 4.1]{FLW17} in view of \eqref{eq-eureka}).
\end{remark}


\subsection{Symmetries of Bloch waves}
\label{sec:blochsym}

In the last part of this section we show that, exploiting the symmetries of the potential $V$, one can choose a family of Bloch waves with some suitable additional symmetries that will be particularly useful in the next sections.

\begin{proposition}
\label{prop-Blochsym}
    It is possible to choose a family of Bloch waves as in \eqref{eq-compl2} so that
     \begin{equation}
         \label{sym_even}
         \left\{
      \begin{array}{ll}
        \displaystyle \Phi_n(-x,k)=\Phi_n (x,2\pi-k), & \forall k \in[0,\pi)\cup(\pi,2\pi],\quad\forall n\in I_1\\[.1cm]
        \displaystyle\Phi_\pm (-x,k)=\Phi_\mp(x, 2\pi -k),  &\forall k \in [0,2\pi],
    \end{array}
    \right.
    \end{equation}
    and
    \begin{equation}
         \label{sym_conj}
         \left\{
      \begin{array}{ll}
        \displaystyle \overline{\Phi_n}(x,k)=\Phi_n (x,2\pi-k), & \forall k \in[0,\pi)\cup(\pi,2\pi],\quad\forall n\in I_1\\[.1cm]
        \displaystyle\overline{\Phi_\pm} (x,k)=\Phi_\mp(x, 2\pi -k),  &\forall k \in [0,2\pi].
    \end{array}
    \right.
    \end{equation}
\end{proposition}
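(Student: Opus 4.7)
The plan is to leverage two natural symmetries of the Floquet--Bloch problem \eqref{eq:FB} coming from the hypothesis that $V$ is real--valued and even: if $\Phi(\cdot,k)\in L^2_k(\R)$ is an eigenfunction with eigenvalue $\mu(k)$, then a direct check shows that both $\Phi(-\cdot,k)$ and $\overline{\Phi(\cdot,k)}$ lie in $L^2_{2\pi-k}(\R)$ and remain eigenfunctions with the same eigenvalue $\mu(k)$. Composing the two, the map $T\Phi(x):=\overline{\Phi(-x)}$ defines an antiunitary involution on $L^2_k(\R)$ that commutes with $\mathcal{H}(k)$.

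For $n\in I_1$ and $k\in[0,\pi]$, the eigenvalue $\mu_n(k)$ is simple (by property (c) of the dispersion bands), hence $T$ acts on its one--dimensional eigenspace as multiplication by a phase $e^{i\alpha}$; since $T$ is antiunitary with $T^2=\mathbb{I}$, rephasing $\Phi_n(\cdot,k)\mapsto e^{i\alpha/2}\Phi_n(\cdot,k)$ gives $T\Phi_n(\cdot,k)=\Phi_n(\cdot,k)$, i.e.\ $\overline{\Phi_n(-x,k)}=\Phi_n(x,k)$. Performing this for every $k\in[0,\pi]$ and then defining $\Phi_n(x,k):=\Phi_n(-x,2\pi-k)$ for $k\in(\pi,2\pi]$, both identities \eqref{sym_even} and \eqref{sym_conj} for $n\in I_1$ are obtained by plugging in the definition and using the $T$-invariance.

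For the Bloch waves $\Phi_\pm$ at the Dirac point, the key observation is that \eqref{eq:lmeno}--\eqref{eq:lpiu} give $\mu_\pm(k)=\mu_\mp(2\pi-k)$, so that the maps $\Phi\mapsto\Phi(-\,\cdot\,)$ and $\Phi\mapsto\overline{\Phi}$ now swap the labels $\pm$. For $k\in[0,\pi)$ the eigenvalues $\mu_\pm(k)$ are still simple, and the same rephasing yields $\overline{\Phi_\pm(-x,k)}=\Phi_\pm(x,k)$; the prescription $\Phi_\pm(x,k):=\Phi_\mp(-x,2\pi-k)$ for $k\in(\pi,2\pi]$ then delivers both \eqref{sym_even} and \eqref{sym_conj}. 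At the Dirac point $k=\pi$, the first identity reduces via \eqref{eq-eureka} to $g_2=\mathcal{I}(g_1)$ and $\mathcal{I}\circ\mathcal{I}=\mathbb{I}$, while the conjugation identity becomes $\overline{g_1(-x)}=g_1(x)$, i.e.\ $Tg_1=g_1$. This last requirement is met by rephasing $g_1$ in the one--dimensional space $\ker(\mathcal{H}(\pi)-\mu_*)\cap L^2_{\pi,\ee}(\R)$, exploiting that $T$ preserves the decomposition $L^2_\pi(\R)=L^2_{\pi,\ee}(\R)\oplus L^2_{\pi,\oo}(\R)$ (immediate from \eqref{eq-even}--\eqref{eq-odd}).

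The main technical obstacle I foresee is making the phase choices consistent between the one--sided interval $[0,\pi)$ and the Dirac point itself: one needs the limit as $k\to\pi^-$ of the $T$-invariant $\Phi_\pm(\cdot,k)$ to agree (up to sign) with the $T$-invariant representatives $g_1,g_2$ fixed independently at $k=\pi$. Since $T$-invariance is closed under limits and the family $\Phi_\pm(\cdot,k)$ depends smoothly on $k$ near $k=\pi$ through \eqref{eq-smoothphi}, any residual mismatch is a single global sign that can be absorbed by a final rephasing of $g_1$ (and correspondingly of $g_2=\mathcal{I}g_1$), without disturbing the already established symmetries on $[0,\pi)$.
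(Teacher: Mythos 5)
Your argument is correct in its essentials but organized quite differently from the paper's proof. You work with the single antiunitary involution $T\Phi=\overline{\Phi(-\,\cdot\,)}$, which maps $L^2_k(\R)$ to itself and commutes with $\mathcal{H}(k)$, impose $T$--invariance fibre by fibre on $[0,\pi]$, and then \emph{define} the waves on $(\pi,2\pi]$ by reflection; both \eqref{sym_even} and \eqref{sym_conj} then drop out simultaneously. The paper instead keeps the two symmetries separate: it compares $\Lambda_n(-x,k)$ and $\overline{\Lambda_n}(x,k)$ with $\Lambda_n(x,2\pi-k)$, extracts two explicit phase functions $\alpha_n,\gamma_n$, proves the relations $\alpha_n(k)=-\alpha_n(2\pi-k)$ and $\gamma_n(k)=\gamma_n(2\pi-k)$, and performs two successive rephasings by $e^{-i\alpha_n/2}$ and $e^{i\gamma_n/2}$ (the symmetry of $\gamma_n$ being what guarantees the second rephasing does not destroy \eqref{sym_even}). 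Your route is conceptually cleaner (a standard real--structure argument); the paper's route has the advantage that the correction is an explicit multiplicative phase of the original family, which makes it immediate that measurability in $k$ (needed for Remark \ref{rem-blochcoeff} and the decomposition \eqref{eq:L2dec}) and the smooth expansion \eqref{eq-smoothphi} near $k=\pi$ are preserved.

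Two points you should tighten. First, your abstract ``rephase so that $T\Phi=\Phi$'' must be done measurably in $k$ (e.g.\ via $e^{i\alpha(k)}=\langle T\Lambda_n(\cdot,k),\Lambda_n(\cdot,k)\rangle_{L^2([0,1])}$, as the paper effectively does), and for the $\pm$ branches the phase must be shown to depend smoothly on $k$ near $\pi$, since \eqref{eq-smoothphi} is used repeatedly later; neither is automatic from a pointwise-in-$k$ choice. Second, your matching at the Dirac point is slightly off: the limits $\lim_{k\to\pi^-}\Phi_-(\cdot,k)$ and $\lim_{k\to\pi^-}\Phi_+(\cdot,k)$ may each differ from $g_1$ and $g_2=\mathcal{I}(g_1)$ by \emph{independent} signs $\epsilon_-,\epsilon_+\in\{\pm1\}$, and a single global rephasing of $g_1$ (which forces the same rephasing of $g_2$) can only absorb one of them. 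If $\epsilon_+\neq\epsilon_-$ you must instead flip the sign of one branch $\Phi_\pm(\cdot,k)$ uniformly in $k$ (which is harmless for the symmetries). In the paper this issue does not arise because the phases $\alpha_\pm,\gamma_\pm$ are shown to extend smoothly to $k=\pi$ with $\alpha_\pm(\pi)=0$, using \eqref{eq-kergenerators} and \eqref{eq-eureka}.
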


\begin{proof}
Let $\left\{\Lambda_n(\cdot,k):n\in I,\,k\in[0,2\pi]\right\}$ be a fixed family of Bloch waves as in \eqref{eq-compl2}, for instance the one used in \cite{FLW17}. We now prove that, starting from this family, we can construct a new family of Bloch waves satisfying also \eqref{sym_even} and \eqref{sym_conj}. We proceed by steps.

\smallskip
\emph{Step (i): construction of a family of Bloch waves as in \eqref{eq-compl2} satisfying \eqref{sym_even}}. First, fix $k\in[0,\pi)$. Exploiting item a) in Section \ref{subsec-fundamentals}, \eqref{eq:lmeno}, \eqref{eq:lpiu},  and the facts that the potential $V$ is even and the actions of $\mathcal{H}$ and $\mathcal{H}(k)$ are the same by definition, a direct computation shows that
\[
\begin{split}
&\mathcal{H}\Lambda_n(-x,k)=\mu_n(2\pi-k)\Lambda_n(-x,k)\qquad\forall n\in I_1,\\
&\mathcal{H}\Lambda_\pm(-x,k)=\mu_\mp(2\pi-k)\Lambda_\pm(-x,k)\,.
\end{split}
\]
Since
\[
\begin{split}
&\Lambda_n(-(x+1),k)=e^{-ik}\Lambda_n(-x,k)=e^{i(2\pi-k)}\Lambda_n(-x,k)\qquad\forall n\in I_1,\\
&\Lambda_\pm(-(x+1),k)=e^{-ik}\Lambda_\pm(-x,k)=e^{i(2\pi-k)}\Lambda_\pm(-x,k),
\end{split}
\]
$\Lambda_n(-x,k)$ is a normalized eigenfunction associated to $\mu_n(2\pi-k)$ and $\Lambda_\pm(-x,k)$ are normalized eigenfunctions with eigenvalues $\mu_\mp(2\pi-k)$, so that there exist $\alpha_n(k),\alpha_\pm(k)\in(-\pi,\pi]$ such that $\Lambda_n(-x,k)=e^{i\alpha_n(k)}\Lambda_n(x,2\pi-k)$ and $\Lambda_\pm(-x,k)=e^{i\alpha_\pm(k)}\Lambda_\mp(x,2\pi-k)$. Since this holds also for $k\in(\pi,2\pi]$, there actually exists a sequence of functions $\big(\alpha_n(k)\big)_{n\in I}$ such that $\alpha_n:[0,\pi)\cup(\pi,2\pi]\to(-\pi,\pi]$ and
\begin{gather}
\label{eq-alphandef}
\Lambda_n(-x,k)=e^{i\alpha_n(k)}\Lambda_n(x,2\pi-k)\qquad\forall k\in[0,\pi)\cup(\pi,2\pi],\quad\forall n\in I_1,\\[.1cm]
\label{eq-alphapmdef}
\Lambda_\pm(-x,k)=e^{i\alpha_\pm(k)}\Lambda_\mp(x,2\pi-k)\qquad\forall k\in[0,\pi)\cup(\pi,2\pi].
\end{gather}
Now, easy computations show that
\[
\begin{split}
&\Lambda_n(-x,2\pi-k)=e^{-i\alpha_n(2\pi-k)}\Lambda_n(x,k),\qquad\forall k\in[0,\pi)\cup(\pi,2\pi],\quad\forall n\in I_1,\\
&\Lambda_\pm(-x,2\pi-k)=e^{-i\alpha_\mp(2\pi-k)}\Lambda_\mp(x,k),\qquad\forall k\in[0,\pi)\cup(\pi,2\pi],
\end{split}
\]
and thus
\begin{gather}
\label{eq-alphansym}
\alpha_n(k)=-\alpha_n(2\pi-k)\qquad\forall k\in[0,\pi)\cup(\pi,2\pi],\quad\forall n\in I_1,\\[.1cm]
\label{eq-alphapmsym}
\alpha_\pm(k)=-\alpha_\mp(2\pi-k)\qquad\forall k\in[0,\pi)\cup(\pi,2\pi].
\end{gather}
Moreover, since
\begin{equation}
\label{eq-nonzero}
\Lambda_n(x,k),\Lambda_\pm(x,k)\neq0,\qquad\text{for a.e.}\quad(x,k)\in\R\times[0,2\pi],
\end{equation}
\eqref{eq-alphandef} and \eqref{eq-alphapmdef} imply that $e^{i\alpha_n(k)},\,e^{i\alpha_\pm(k)}$ are Borel--measurable on $[0,\pi)\cup(\pi,2\pi]$. As the function $\mathrm{atan}2$ is Borel--measurable too, one also has that $\alpha_n,\,\alpha_\pm$ are Borel--measurable on $[0,\pi)\cup(\pi,2\pi]$.

On the other hand, define the functions $h_\pm:[0,\pi)\cup(\pi,2\pi]\to\C$ and $H_\pm:[0,2\pi]\to\C$ such that
\[
h_\pm(k):=e^{i\alpha_\pm(k)}\qquad\text{and}\qquad H_\pm(k):=\left\langle\Lambda_\pm(-\,\cdot,k), \Lambda_\mp(\cdot,2\pi-k)\right\rangle_{L^2([0,1])}.
\]
By dominated convergence and \eqref{eq-asymptphi}, \eqref{eq-asymptphirest} and \eqref{eq-smoothphi}, one has that $H_\pm$ are smooth on $(\pi-\varepsilon_0,\pi+\varepsilon_0)$. Moreover, by \eqref{eq-alphapmdef} and the normalization of the Bloch waves,
\[
H_\pm(k)=h_\pm(k)\|\Lambda_\mp(\cdot,2\pi-k)\|_{L^2([0,1])}^2=h_\pm(k),\qquad\forall k\in(\pi-\varepsilon_0,\pi)\cup(\pi,\pi+\varepsilon_0).
\]
Hence $h_\pm$ are smooth in $(\pi-\varepsilon_0,\pi)\cup(\pi,\pi+\varepsilon_0)$ and can be extended in a smooth way at $k=\pi$. Moreover, since they cannot vanish by definition, by the properties of the complex logarithm we have that $\alpha_\pm(k)$ are smooth in $(\pi-\varepsilon_0,\pi)\cup(\pi,\pi+\varepsilon_0)$ and can be extended in a smooth way at $k=\pi$. Moreover, \eqref{eq-kergenerators}, \eqref{eq-eureka} and \eqref{eq-alphapmdef} entail that $\alpha_\pm(\pi)=0$.

As a consequence, if we define
\begin{gather}
\label{eq-phineven}
\Gamma_n(x,k):=\left\{
\begin{array}{ll}
\displaystyle \Lambda_n(x,\pi) & \text{if }\:k=\pi\\[.1cm]
\displaystyle e^{-i\frac{\alpha_n(k)}{2}}\Lambda_n(x,k) & \text{if }\:k\in[0,\pi)\cup(\pi,2\pi],
\end{array}
\right.\\[.1cm]
\label{eq-phipmeven}
\Gamma_\pm(x,k):=e^{-i\frac{\alpha_\pm(k)}{2}}\Lambda_\pm(x,k),\qquad\forall k\in[0,2\pi],
\end{gather}
then the family $\left\{\Gamma_n(\cdot,k):n\in I,\,k\in[0,2\pi]\right\}$ preserves integrability and regularity features of $\left\{\Lambda_n(\cdot,k):n\in I,\,k\in[0,2\pi]\right\}$ (i.e., Remark \ref{rem-blochcoeff} and \eqref{eq-asymptphi}$,\, $\eqref{eq-asymptphirest}$,\, $\eqref{eq-smoothphi}) and satisfies \eqref{sym_even}. 

\smallskip
\emph{Step (ii): construction of a family of Bloch waves as in \eqref{eq-compl2} satisfying \eqref{sym_even} and \eqref{sym_conj}}. Again, fix $k\in[0,\pi)$. Arguing as in Step (i) and using that the eigenvalues and the potential $V$ are real, one can check that $\overline{\Gamma_n}(x,k)$ is a normalized eigenfunction with eigenvalue $\mu_n(2\pi-k)$ for every $n\in I_1$ and that $\overline{\Gamma_\pm}(x,k)$ are normalized eigenfunctions associated to $\mu_\mp(2\pi-k)$. Hence, there exists a sequence of functions $\big(\gamma_n(k)\big)_{n\in I}$ such that $\gamma_n:[0,\pi)\cup(\pi,2\pi]\to(-\pi,\pi]$ and
\begin{gather}
\label{eq-gammandef}
\overline{\Gamma_n}(x,k)=e^{i\gamma_n(k)}\Gamma_n(x,2\pi-k)\qquad\forall k\in[0,\pi)\cup(\pi,2\pi],\quad\forall n\in I_1,\\[.1cm]
\label{eq-gammapmdef}
\overline{\Gamma_\pm}(x,k)=e^{i\gamma_\pm(k)}\Gamma_\mp(x,2\pi-k)\qquad\forall k\in[0,\pi)\cup(\pi,2\pi].
\end{gather}
Direct computations on $\overline{\Gamma_n}(x,2\pi-k)$ then entail
\begin{gather}
\gamma_n(k)=\gamma_n(2\pi-k)\qquad\forall k\in[0,\pi)\cup(\pi,2\pi],\quad\forall n\in I_1,\nonumber\\[.1cm]
\label{eq-gammapmsym}
\gamma_\pm(k)=\gamma_\mp(2\pi-k)\qquad\forall k\in[0,\pi)\cup(\pi,2\pi]\,.
\end{gather}
Moreover, by \eqref{eq-nonzero},
\begin{equation}
\label{eq-nonzerouno}
\Gamma_n(x,k),\Gamma_\pm(x,k)\neq0\qquad\text{for a.e.}\quad(x,k)\in\R\times[0,2\pi],
\end{equation}
so that \eqref{eq-gammandef} and \eqref{eq-gammapmdef} ensure that $e^{i\gamma_n(k)},\,e^{i\gamma_\pm(k)}$ are Borel--measurable on $[0,\pi)\cup(\pi,2\pi]$, in turn guaranteeing that $\gamma_n,\,\gamma_\pm$ are Borel--measurable on $[0,\pi)\cup(\pi,2\pi]$ too.

In addition, defining the functions $q_\pm:[0,\pi)\cup(\pi,2\pi]\to\C$ and $Q_\pm:[0,2\pi]\to\C$ such that
\[
q_\pm(k):=e^{i\gamma_\pm(k)}\qquad\text{and}\qquad Q_\pm(k):=\left\langle\overline{\Gamma_\pm}(\cdot,k),\Gamma_\mp(\cdot,2\pi-k)\right\rangle_{L^2([0,1])},
\]
by dominated convergence and \eqref{eq-asymptphi}$\&$\eqref{eq-asymptphirest}$\&$\eqref{eq-smoothphi} it follows that $Q_\pm$ are smooth on $(\pi-\varepsilon_0,\pi+\varepsilon_0)$, and, by \eqref{eq-gammapmdef} and the normalization of the Bloch waves,
\[
Q_\pm(k)=q_\pm(k)\|\Gamma_\mp(\cdot,2\pi-k)\|_{L^2([0,1])}^2=q_\pm(k),\qquad\forall k\in(\pi-\varepsilon_0,\pi)\cup(\pi,\pi+\varepsilon_0).
\]
Hence $q_\pm$ are smooth in $(\pi-\varepsilon_0,\pi)\cup(\pi,\pi+\varepsilon_0)$ and can be extended in a smooth way at $k=\pi$, and the same is true for $\gamma_\pm(k)$.

Thus, if we set
\begin{gather}
\label{eq-phinevendue}
\Phi_n(x,k):=\left\{
\begin{array}{ll}
\displaystyle \Gamma_n(x,\pi) & \text{if }\:k=\pi\\[.1cm]
\displaystyle e^{i\frac{\gamma_n(k)}{2}}\Gamma_n(x,k) & \text{if }\:k\in[0,\pi)\cup(\pi,2\pi],
\end{array}
\right.\\[.1cm]
\label{eq-phipmevendue}
\Phi_\pm(x,k):=e^{i\frac{\gamma_\pm(k)}{2}}\Gamma_\pm(x,k),\qquad\forall k\in[0,2\pi],
\end{gather}
then the family $\left\{\Phi_n(\cdot,k):n\in I,\,k\in[0,2\pi]\right\}$ preserves integrability and regularity features of $\left\{\Gamma_n(\cdot,k):n\in I,\,k\in[0,2\pi]\right\}$ (i.e., Remark \ref{rem-blochcoeff} and \eqref{eq-asymptphi}, \eqref{eq-asymptphirest}, \eqref{eq-smoothphi}) as well as \eqref{sym_even}, and satisfies \eqref{sym_conj}.
\end{proof}

\begin{remark}
\label{payattention}
Observe that the first line of \eqref{sym_even} does not say anything on $\Lambda_n(\cdot,\pi)$ for $n\in I_1$. In principle, one may wonder whether the first line of \eqref{sym_even} automatically implies also $\Phi_n(-x,\pi)=\Phi_n(x,\pi)$ for every $n\in I_1$. Actually, this is not the case. Indeed, for $k\neq\pi$ the Bloch waves $\Phi_n$ satisfying the first line of \eqref{sym_even} have been explicitly constructed in Step (i) of the proof of Proposition \ref{prop-Blochsym} by means of $e^{-i\alpha_n(k)/2}\Lambda_n(x,k)$, where $\Lambda_n$ is the Bloch wave associated to the eigenvalue $\mu_n(k)$ given in \cite{FLW17}, and $\alpha_n(k)$ satisfies \eqref{eq-alphansym}. Now, by \cite[Theorem XIII.89 -- item (f)]{RS81}, the original Bloch wave $\Lambda_n$ is analytic in $[0,\pi)\cup(\pi,2\pi]$ and continuous on $[0,2\pi]$. In general, however, higher regularity is not available at $k=\pi$ for $\Lambda_n$. Arguing as in Step (i) of the proof of Proposition \ref{prop-Blochsym}, these regularity properties of $\Lambda_n$ allow to extend $e^{i\alpha_n(k)}$ in a continuous way at $k=\pi$, but no more than the continuity can be granted. However, the sole continuity of $e^{i\alpha_n(k)}$ at $k=\pi$ does not ensure that $\alpha_n(k)$ is continuous, since it can be either that $\alpha_n$ is continuous at $k=\pi$ with $\alpha_n(\pi)=0$, or that $\alpha_n$ has a jump discontinuity at $k=\pi$, with left and right limits taking values $\pi$ or $-\pi$. In fact, both cases occur in general: the former corresponds to even Bloch waves, the latter to odd ones. Hence, the argument developed in the previous proof does not imply any specific relation at $k=\pi$ for the Bloch waves constructed in Proposition \ref{prop-Blochsym} when $n\in I_1$. Note that, for the vary same reason, the possible lack of continuity of $\alpha_n$ at $k=\pi$ prevents in general the possibilty to extend also the first line of \eqref{sym_conj} at $k=\pi$.

On the contrary, this regularity issue does not affect the two Bloch waves $\Phi_\pm$, since from the very beginning of the proof we start with corresponding $\Lambda_\pm$ that are smooth around $k=\pi$ (as they satisfy \eqref{eq-smoothphi}). As shown in detail along the proof, this additional regularity (which is due to the re-parametrization of the branches of Bloch waves around $k=\pi$, and it is thus not in constrast with the aforementioned general regularity result reported in \cite[Theorem XIII.89 -- item (f)]{RS81}) yields a corresponding higher regularity for $\alpha_\pm$, which is turn consistent with the second lines of \eqref{sym_even}$\&$\eqref{sym_conj} at $k=\pi$. 
\end{remark}

\begin{remark}
\label{rem-nostrescelte}
In this paper we will use only the family of Bloch waves constructed in Proposition \ref{prop-Blochsym}. Observe that these waves display a phase rotations, depending solely on $n$ and $k$, with respect to the ones used in \cite{FLW17}. Precisely,
\begin{gather}
\label{eq-phinevendef}
\Phi_n(x,k):=\left\{
\begin{array}{ll}
\displaystyle \Lambda_n(x,\pi) & \text{if }\:k=\pi\\[.1cm]
\displaystyle e^{i\nu_n(k)}\Lambda_n(x,k) & \text{if }\:k\in[0,\pi)\cup(\pi,2\pi],
\end{array}
\right.\\[.1cm]
\label{eq-phipmevendef}
\Phi_\pm(x,k):=e^{i\nu_\pm(k)}\Lambda_\pm(x,k),\qquad\forall k\in[0,2\pi],
\end{gather}
with
\[
\nu_n:=\frac{\gamma_n-\alpha_n}{2},\qquad\forall n\in I,
\]
with $\alpha_n,\,\gamma_n$ defined in the proof of Proposition \ref{prop-Blochsym}. Even though this phase rotation is crucial to have \eqref{sym_even} and \eqref{sym_conj}, it does not affect any integral in the $x$ variable present in \cite{FLW17} and used here. However, we will explicitly mention the points throughout where this phase rotation give rise to significant differences with respect to \cite{FLW17}.
\end{remark}

\begin{remark}
\label{rem-beta2real}
Using \eqref{eq:beta2}, \eqref{sym_even} and the pseudo--periodicity of the Bloch waves at $k=\pi$, we have that $\beta_2\in\R$, since
    \[
    \begin{split}
    \beta_2&=\int^1_0\overline{\Phi_+}^2(x,\pi)\Phi_-^2(x,\pi)\,dx =\int^1_0\overline{\Phi_-}^2(-x,\pi)\Phi_+^2(-x,\pi)\,dx \\
    &=\int^0_{-1}\overline{\Phi_-}^2(y,\pi)\Phi_+^2(y,\pi)\, dy=\int^1_0\overline{\Phi_-}^2(z-1,\pi)\Phi_+^2(z-1,\pi)\,dz \\
    &=\int^1_0\overline{\Phi_-}^2(z,\pi)\Phi_+^2(z,\pi)\,dz=\overline{\beta_2}.
    \end{split}
    \]
\end{remark}


\section{Proof of Theorem \ref{thm:main2}}
\label{sec:limiteq}

In this section we prove Theorem \ref{thm:main2}, constructing a solution $\Psi$ of equation \eqref{NLD} in the form
\[
\Psi=\frac{1}{2}\begin{pmatrix}
    u+i\, v \\
    u-i\, v
\end{pmatrix}
\]
with $u:\R\to\R$ even and $v:\R\to\R$ odd. The proof is based on a dynamical system approach already exploited for Dirac equations in \cite{B22}.

\begin{proof}[Proof of Theorem \ref{thm:main2}]
For the sake of clarity, it is convenient to divide the proof in two steps.

\emph{Step(i): existence of a smooth solution of the form \eqref{eq:uvsol}$\&$\eqref{eq:uvsimm}.} Let us first prove the result in the case
    \[
    c_\sharp=1\,,\qquad \vartheta_\sharp=1\,.
    \]
    In this setting, denoting $\mu_\sharp$ simply by $\mu$ and by $f'$ the derivative of the one--variable function $f$, equation \eqref{NLD} for $\Psi=(\Psi_-,\Psi_+)^T$ reads
    \begin{equation}
    \label{eq:sist1}
    \begin{cases}
        i\Psi_-'+\Psi_+-\mu\Psi_-=(\beta_1|\Psi_-|^2+2\beta_1|\Psi_+|^2)\Psi_-+\beta_2\overline{\Psi_-}\Psi_+^2 & \\[.1cm]
        -i\Psi_+'+\Psi_--\mu\Psi_+=\beta_2\overline{\Psi_+}\Psi_-^2+(\beta_1|\Psi_+|^2+2\beta_1|\Psi_-|^2)\Psi_+ & 
    \end{cases}
    \end{equation}
    with $\mu\in(-1,1)$. Then, summing the two lines of \eqref{eq:sist1} and defining $(\varphi_1,\varphi_2)$ as
\begin{equation}\label{eq:varphi}
\begin{cases}
    \varphi_1:=\Psi_-+\Psi_+ \\[.1cm]
    \varphi_2:=\Psi_--\Psi_+
\end{cases} 
\iff 
\begin{cases}
 \Psi_-=(\varphi_1+\varphi_2)/2 \\[.1cm]
  \Psi_+=(\varphi_1-\varphi_2)/2
\end{cases},
\end{equation}
there results
\begin{multline}
\label{eq:lin1}
i\varphi'_2+\varphi_1-\mu\varphi_1=\\[.1cm]
=\beta_1\big(\vert\Psi_-\vert^2\Psi_-+\vert\Psi_+\vert^2\Psi_+\big)+2\beta_1\big(\vert\Psi_+\vert^2\Psi_-+\vert\Psi_-\vert^2\Psi_+\big)+\beta_2\big(\overline{\Psi_-}\Psi_+^2+\overline{\Psi_+}\Psi_-^2\big)\,.
\end{multline}
Moreover, \eqref{eq:varphi} yields
\begin{align*}
    \vert\Psi_-\vert^2\Psi_-+\vert\Psi_+\vert^2\Psi_+ & = \frac{1}{4}\big(\vert\varphi_1\vert^2+\vert\varphi_2\vert^2\big)\varphi_1+\frac{1}{2}\Re(\varphi_1\overline{\varphi_2})\varphi_2\,,\\[.1cm]
    \vert\Psi_+\vert^2\Psi_-+\vert\Psi_-\vert^2\Psi_+ & = \frac{1}{4}\big(\vert\varphi_1\vert^2+\vert\varphi_2\vert^2\big)\varphi_1-\frac{1}{2}\Re(\varphi_1\overline{\varphi_2})\varphi_2\,,\\[.1cm]
    \overline{\Psi_-}\Psi_+^2+\overline{\Psi_+}\Psi_-^2 & = \frac{1}{8}\left(2\vert\varphi_1\vert^2\varphi_1-4\vert \varphi_2\vert^2\varphi_1+2\overline{\varphi_1}\varphi_2^2\right)\,.
\end{align*}
Now, making the following ansatz on $\varphi_1,\,\varphi_2$
\begin{equation}\label{eq:uv}
\varphi_1(y)= u(y)\,,\qquad \varphi_2(y)=i\, v(y)\,,\qquad y\in\R\,,
\end{equation}
with $u,v$ real--valued functions, one finds that
\begin{align*}
i\varphi'_2+\varphi_1-\mu\varphi_1 & =-v'+u-\mu u\,,\\[.1cm]
\beta_1\big(\vert\Psi_-\vert^2\Psi_-+\vert\Psi_+\vert^2\Psi_+\big) & =\frac{\beta_1}{4}\big(u^2+v^2\big)u\,,\\[.1cm] 2\beta_1\big(\vert\Psi_+\vert^2\Psi_-+\vert\Psi_-\vert^2\Psi_+\big) & =\frac{2\beta_1}{4}\big(u^2+v^2\big)u\,,\\[.1cm]
\beta_2\big(\overline{\Psi_-}\Psi_+^2+\overline{\Psi_+}\Psi_-^2\big) &=\frac{\beta_2}{4}\big(u^2-3v^2\big)u\,.
\end{align*}
Combining the above calculations we obtain that \eqref{eq:lin1} reads
\[
-v'+u-\mu u=\frac{3\beta_1}{4}(u^2+v^2)u+\frac{\beta_2}{4}(u^2-3v^2)u.
\]
Similar computations starting with the difference between the first and the second equation in \eqref{eq:sist1} lead to
\[
u'-v-\mu v=\frac{3\beta_1}{4}(u^2+v^2)v+\frac{\beta_2}{4}(v^2-3u^2)v\,.
\]
Summing up, suitably rearranging terms, we have that the pair $(u,v)$ has to satisfy
\[
    \begin{cases}
        \displaystyle u'=v+\mu v+\frac{3\beta_1-3\beta_2}{4}u^2v+\frac{3\beta_1+\beta_2}{4} v^3 \\[.2cm]
        \displaystyle v'= u-\mu u- \frac{3\beta_1+\beta_2}{4}u^3-\frac{3\beta_1-3\beta_2}{4}v^2u\,.
    \end{cases}
\]
Since by assumption $\beta_1\neq0,\,|\beta_2|\leq\beta_1$, we get
\[
\begin{cases}
4a:=3\beta_1-3\beta_2\geq 3(\beta_1-\vert \beta_2\vert)\geq 0 \\[.1cm]
4b:=3\beta_1+\beta_2>\beta_1-\beta_2\geq0\,,
\end{cases}
\]
so that
\begin{equation}\label{eq:ab}
a\geq0\qquad\mbox{and}\qquad b>0\,.
\end{equation}
We are thus led to perform a phase--plane analysis of the following system
\begin{equation}\label{eq:system}
    \begin{cases}
        u'=v+\mu v+a u^2v+b v^3 \\[.1cm]
        v'= u-\mu u- bu^3-av^2u
    \end{cases}\,,
\end{equation}
which turns out to be of Hamiltonian type, that is, of the form
\begin{equation}
  u'=\frac{\partial H}{\partial v} \,,\qquad v'=-\frac{\partial H}{\partial u}\,,
\end{equation}
with Hamiltonian function
\begin{equation}\label{eq:Ham}
H(u,v)=\frac{b}{4}(u^4+v^4)+\frac{a}{2}u^2v^2+\frac{\mu}{2}(u^2+v^2)+\frac{1}{2}(v^2-u^2)\,.
\end{equation}
Moreover, since the solutions of \eqref{eq:sist1} that we are looking for have to vanish at infinity, we require $(u,v)$ to satisfy the asymptotic conditions
\begin{equation}\label{eq:zerolimit}
\lim_{y\to\pm\infty}\big(u(y),v(y)\big)=(0,0)\,
\end{equation}
that is, we are looking for a homoclinic solution with respect to the origin. Since $H(0,0)=0$, the homoclinic we are interested in must lie on the zero--energy level curve
\[
H_0:=\left\{(u,v)\in\R^2\,:\,H(u,v)=0\right\}\,
\]
 \begin{figure}[!h]
        \centering
        \includegraphics[scale=.4
        ]{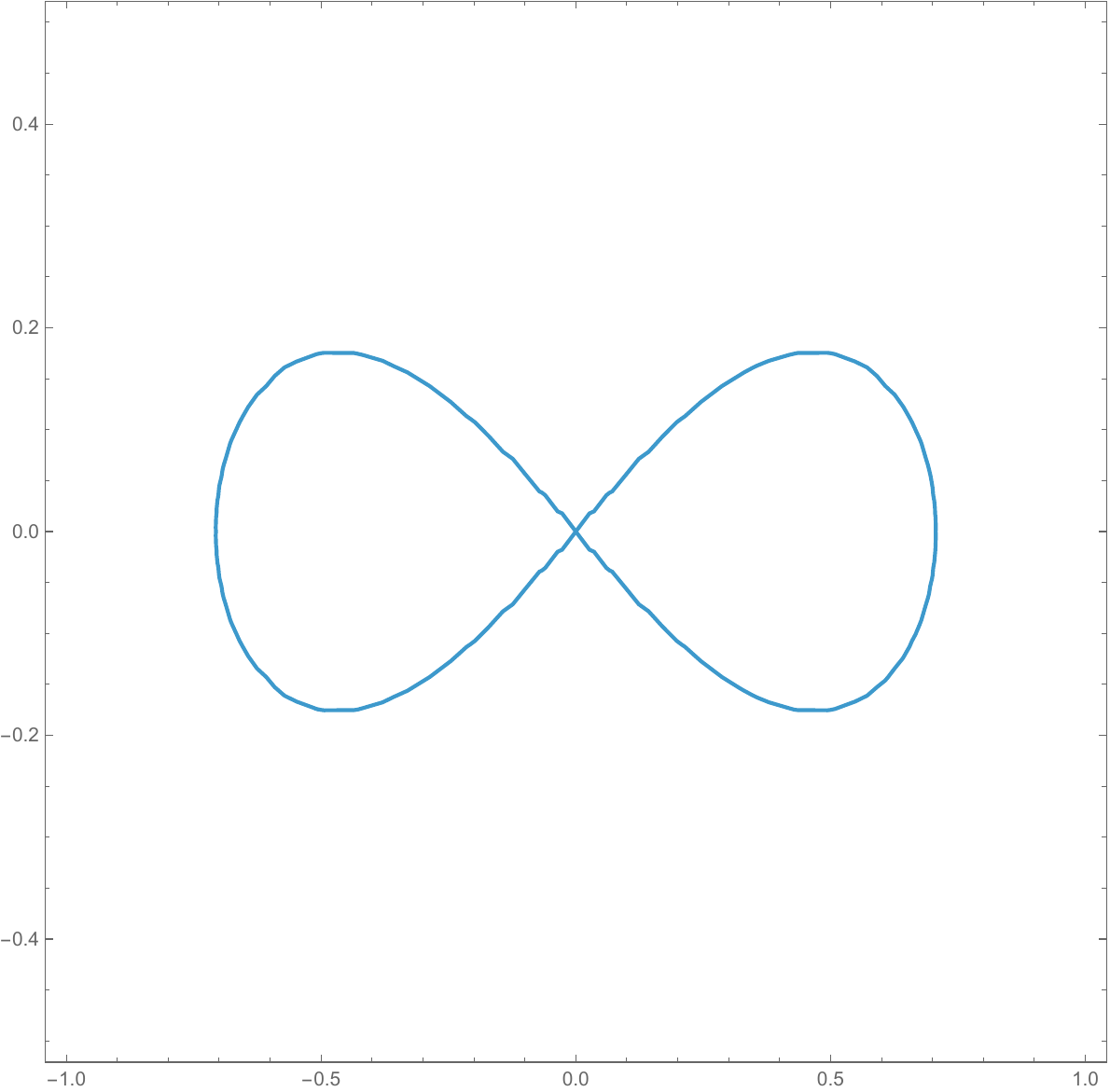}
 \caption{The energy level $H_0$, for $a=b=2$ and $\mu=0.5$.}
 \label{fig1}
   \end{figure}
(see  Figure \ref{fig1}). To find a solution $(u,v)$ of \eqref{eq:system}$\&$\eqref{eq:zerolimit}, we first note that $H(u,0)=0$ if and only if $u=0$ or $u=\pm\sqrt{2(1-\mu)/b}$. Hence, we consider the solution $(u,v)$ of \eqref{eq:system} satisfying
\begin{equation}
\label{eq-ci}
\big(u(0), v(0)\big) = (u_0,v_0) :=\left(\sqrt{2(1-\mu)/b},0\right)\,.
\end{equation}
The right hand side of \eqref{eq:system} being smooth, the local existence of this solution is guaranteed, as well as its uniqueness and smoothness. Since 
\[
\lim_{|(u,v)|\to\infty}H(u,v)=+\infty,
\]
the level set $H_0$ is compact, ensuring that the solution we are considering is actually global, i.e. it exists on the whole real line. Furthermore, a direct computation shows that, if $(u,v)$ satisfies \eqref{eq:system}$\&$\eqref{eq-ci}, then $\big(u(-\,\cdot),-v(-\,\cdot)\big)$ satisfies the same Cauchy problem, thus proving that $u$ is even and $v$ is odd. Therefore, we are left to prove that the couple $(u,v)$ thus constructed satisfies \eqref{eq:zerolimit}. To this aim, define the $\omega$-limit sets
\[
\Omega_\pm := \left\{ (\widetilde u,\widetilde v)\in\R^2:\ \big(u(y_n),v(y_n)\big)\to (\widetilde u,\widetilde v)\text{ for some}\ y_n\to\pm\infty\text{ as }n\to+\infty\right\} \,.
\]
Since, as observed, the set $H_0$ is compact, it is not hard to see that $\Omega_\pm$ is non-empty, compact and connected \cite[Lemma 6.6]{Teschl}. Then, by the Poincar\'e--Bendixson theorem one of the following alternatives holds for $\Omega_\pm$: 
\begin{enumerate}[label=\roman*)]
\item $\Omega_\pm$ is an equilibrium of the system; 
\item $\Omega_\pm$ is a regular periodic orbit (which is then a limit cycle);
\item $\Omega_\pm$ consists of equilibria and non-closed orbits connecting them.
\end{enumerate}
Note first that the continuity of the Hamiltonian gives
\begin{equation}
\label{omlimezeroen}
\Omega_\pm\subseteq H_0\,.
\end{equation}
Moreover, the only equilibrium of the system in $H_0$ is the origin $(0,0)$. Indeed, imposing the conditions
\begin{equation}\label{eq:equil}
    \frac{\partial H}{\partial u}=0\,,\qquad \frac{\partial H}{\partial v}=0\,,
\end{equation}
we obtain the system
\begin{equation}\label{eq:equileq}
\begin{cases}
(bv^2+au^2+\mu+1)v=0 \\[.1cm]
(bu^2+av^2+\mu-1)u=0\,.
\end{cases}
\end{equation}
Since here $\mu\in(-1,1)$ by assumption, by \eqref{eq:ab} the first equation yields
\[
v=0\,.
\]
Substituting into the second equation one finds either $u=0$ or 
\[
bu^2+\mu-1=0\implies u_{\pm}=\pm\sqrt{(1-\mu)/b}\,,
\]
so that the equilibria of the system are the points
\[
(0,0) \,, \quad (u_\pm,0)\,,
\]
and the origin is the only one in $H_0$, as
\[
H(u_\pm,0)= -\frac{(1-\mu)^2}{4b}<0
\]
using again that $\vert \mu\vert<1$. Hence, to prove \eqref{eq:zerolimit} we need to exclude the alternatives $ii)$ and $iii)$ above.

To this end, given our solution $(u,v)$ to \eqref{eq:system}, set
\[
\theta(y):=\arctan\frac{v(y)}{u(y)}\,,\quad y\in\R\,.
\]
First, note that $\theta(y)$ is well-defined for every $y\in\R$. Indeed, if $u(y)=0$ for some $y\in\R$, then the form of $H$ and the fact that $\big(u(y),v(y)\big)\in H_0$  would imply $v(y)=0$. However, since $(0,0)$ is an equilibrium, this would yield $u\equiv v\equiv 0$ on $\R$, which is impossible since by assumption $u(0)=\sqrt{2(1-\mu)/b}>0$. This also proves, in fact, that $u>0$, whence $\theta$ is smooth.

Moreover, using the condition $H\big(u(y),v(y)\big)=0$ for every $y\in\R$, a direct computation shows that
\[
\begin{split}
\theta'(y)=\frac{v'u-vu'}{u^2+v^2}&\,=\frac{u^2-\mu u^2-bu^4-au^2v^2-v^2-\mu v^2-au^2v^2-bv^4}{u^2+v^2}\\
&\,=-\frac{b(u^4+v^4)+2au^2v^2}{2(u^2+v^2)}<0,
\end{split}
\]
where we used \eqref{eq:ab} and the fact that $(u(y),v(y))\neq(0,0)$ for every $y$. Hence, $\theta$ is strictly decreasing on $\R$. Thus, since $u>0$, $\displaystyle \exists\lim_{y\to\pm\infty}\theta(y)=\theta_\pm\in[-\pi/2,\pi/2]$. Now, assume by contradiction that $\Omega_\pm$ is a limit cycle. As $(0,0)$ is an equilibrium, $(0,0)\not\in\Omega_\pm$. Now, let $(\widetilde u,\widetilde v)\in\Omega_\pm$. By the previous remark and \eqref{omlimezeroen} $\widetilde{u}\neq0$. In addition, one can check that $(\widetilde u,\widetilde v)$ lies on a straight line of angular coefficient $\theta_\pm$. Since this is true for any point in $\Omega_\pm$, it cannot be a limit cycle, thus ruling out alternative ii). Moreover, since $H_0$ contains the unique equilibrium $(0,0)$, no closed orbit connecting different equilibria belongs to $H_0$ and alternative iii) does not occur too. As a consequence, i) holds and \eqref{eq:zerolimit} is proved. Whence $\Psi=(\Psi_-,\Psi_+)^T$ given by \eqref{eq:varphi}$\&$\eqref{eq:uv} is a smooth solution of \eqref{NLD} of the desired form.

Although the above argument is developed under the assumption $c_\sharp=\vartheta_\sharp=1$, it clearly applies if this is relaxed to $c_\sharp,\,\vartheta_\sharp>0$ (with some inessential numerology changes). Moreover, it is easily seen that if $c_\sharp<0$, $\vartheta_\sharp>0$ the same argument works with no modifications, as it amounts to work with the Hamiltonian $-H(u,v)$ with $H$ given by \eqref{eq:Ham}, so that the zero--level set $H_0$ and the symmetries remain the same. Conversely, if $c_\sharp>0$, $\vartheta_\sharp<0$ we can argue again as above switching the role of $u$ and $v$, since a change of sign for $\vartheta_\sharp$ only reflects in a corresponding change of sign for the last term of \eqref{eq:Ham}, which in turn means we replace $H(u,v)$ with $H(v,u)$. However, this entails that, if $(u,v)$ solves the associated Cauchy problem, then $\big(-u(-\,\cdot),v(-\,\cdot)\big)$ solves the same one. Hence, here $u$ is odd and $v$ is even. Accordingly, if $c_\sharp<0$, $\vartheta_\sharp<0$, it is enough to work with $-H(v,u)$. Summing up, the existence of a solution $\Psi$ to \eqref{NLD} in the form \eqref{eq:uvsol}$\&$\eqref{eq:uvsimm} is guaranteed independently of the signs of $c_\sharp,\vartheta_\sharp$.

\emph{Step (ii): solutions of Step (i) satisfy \eqref{eq:expdecay}.} Let $\Psi$ be the smooth solution constructed at Step (i) for general $c_\sharp,\,\vartheta_\sharp\in\R\setminus\{0\}$. The equations solved by the associated smooth pair $(u,v)$ are
\begin{equation}
\label{eq:systemgeneral}
    \begin{cases}
        \displaystyle c_\sharp u'=\vartheta_\sharp v+\mu v+a u^2v+b v^3, \\[.1cm]
        \displaystyle c_\sharp v'= \vartheta_\sharp u-\mu u- bu^3-av^2u.
    \end{cases}
\end{equation}
Differentiating the first equation $u$ and further using \eqref{eq:systemgeneral}, there results that $u$ satisfies
\begin{equation}
\label{eq:u''}
-u''+\frac{\vartheta_\sharp^2-\mu^2}{c_\sharp^2}u=G(y)u
\end{equation}
for a suitable smooth function $G$ such that $G(y)\to0$ as $|y|\to+\infty$ (since $G$ is a polynomial in $u$ and $v$). Recall also that $u>0$ on $(0,+\infty)$ (when $\theta>0$, in fact, on $\R$). Then, for any fixed $0<\varepsilon<(\vartheta_\sharp^2-\mu^2)/c_\sharp^2$, there exists $M_\varepsilon>0$ such that
\[
u''\geq\left(\frac{\vartheta_\sharp^2-\mu^2}{c_\sharp^2}-\varepsilon\right)u>0,\qquad\forall y\geq M_\varepsilon.
\]
Thus, the function
\[
U(y):=u(y)-u(M_\varepsilon)\,e^{-\sqrt{\frac{\vartheta^2-\mu^2}{c_\sharp^2}-\varepsilon\,}\,(y-M_\varepsilon)}
\]
is such that 
\[
U(M_\varepsilon)=0\,,\qquad \lim_{y\to+\infty}U(y)=0\,,
\]
and
\[
U''\geq \left(\frac{\vartheta_\sharp^2-\mu^2}{c_\sharp^2}-\varepsilon\right)U\qquad\forall y\geq M_\varepsilon.
\]
By the comparison principle one gets
\[
U(y)\leq0,\qquad\forall y\geq M_\varepsilon
\]
and thus
\[
u(y)\leq u(M_\varepsilon)e^{-\sqrt{\frac{\vartheta^2-\mu^2}{c_\sharp^2}-\varepsilon\,}\,(y-M_\varepsilon)},\qquad\forall y\geq M_\varepsilon.
\]
Since $u$ is even or odd (and smooth), this means that there exists a constant $C_\varepsilon>0$ (depending on $u$) such that
\[
|u(y)|\leq C_\varepsilon e^{-\sqrt{\frac{\vartheta^2-\mu^2}{c_\sharp^2}-\varepsilon\,}\,|y|},\qquad\forall y\in\R.
\]
Hence, one obtains the same estimate on $v$ with the same argument (with the only proviso of starting in a neighborhood of $-\infty$).  As a completely analogous argument works on the higher order derivatives just differentiating (and properly using) \eqref{eq:systemgeneral}, \eqref{eq:varphi} and \eqref{eq:uv} yield \eqref{eq:expdecay}.
\end{proof}

\begin{remark}
\label{rem:eqPsi'}
Observe that, by construction, the solution $\Psi$ to \eqref{NLD} given by Theorem \ref{thm:main2} is such that $\Psi_+=\overline{\Psi_-}$. Exploiting this relation, \eqref{NLD} can be rewritten as
\[
\begin{cases}
i\Psi_-'+\Psi_+-\mu_\sharp\Psi_-=3\beta_1|\Psi_-|^2\Psi_-+\beta_2\Psi_+^3 & \\
        -i\Psi_+'+\Psi_--\mu_\sharp\Psi_+=\beta_2\Psi_-^3+3\beta_1|\Psi_+|^2\Psi_+\,. & 
\end{cases}
\]
Differentiating the equation, we obtain that the spinor $\eta:=(\Psi_-',\Psi_+')^T$ satisfies
\begin{equation}
\label{eq-psiprime}
ic_\sharp\sigma_3\eta'+\vartheta_\sharp\sigma_1\eta-\mu_\sharp\eta=\begin{pmatrix}
    6\beta_1|\Psi_-|^2 & 3\left(\beta_1\Psi_-^2+\beta_2\overline{\Psi_-}^2\right) \\
    3\left(\beta_1\overline{\Psi_-}^2+\beta_2\Psi_-^2\right) & 6\beta_1|\Psi_-|^2
\end{pmatrix}\eta\,.
\end{equation}
\end{remark}


\section{Two-scale construction of Dirac solitons for (\ref{NLS})}
\label{sec:der_NLD}
In this section we begin the proof of Theorem \ref{thm:main1}. Precisely, here we develop a multiscale expansion that motivates the specific form of the solution to \eqref{NLS} given in Theorem \ref{thm:main1}, and we derive an equation for the corrector term. The solution of this latter equation will then be the object of the next section.

Let $V,\, W,\, c_\sharp,\, \vartheta_\sharp,\,,\mu_\sharp,\, \beta_1,\, \beta_2$ be as in the hypotheses of Theorem \ref{thm:main1}, with $\Phi_\pm(\cdot,\pi)$ the two Bloch waves given by Proposition \ref{prop-Blochsym}. In view of Remark \ref{rem-teorel}, Theorem \ref{thm:main2} ensures the existence of a smooth non--trivial solution $\Psi=(\Psi_-,\Psi_+)^T$ to \eqref{NLD}, in the form \eqref{eq:uvsol}$\&$\eqref{eq:uvsimm} and satisfying \eqref{eq:expdecay}. Fix, then, one such $\Psi$ and set
\begin{equation}
\label{eq:U0}
U_0(x,y):=\Psi_-(y)\Phi_-(x,\pi)+\Psi_+(y)\Phi_+(x,\pi)\,.
\end{equation}
At this point we make the following ansatz:
\begin{equation}
\label{eq:ansatz}
u_\delta(x):=\sqrt{\delta}\,\mathcal{U}_\delta(x,\delta x),\qquad\text{where}\qquad \mathcal{U}_\delta(x,y):=U_0(x,y)+\delta U_1(x,y)+\delta\eta_\delta(x),
\end{equation}
$U_0$ is as in \eqref{eq:U0} and $U_1(x,y),\,\eta_\delta(x)$ have to be determined so that $u_\delta$ is a solution of \eqref{NLS}, with $\mu_\delta$ as in \eqref{eq:muas} for any given $\mu_\sharp$ satisfying \eqref{eq-musciarp}. Moreover, we require $\mathcal U_\delta$ to be $\pi$--pseudoperiodic in the variable $x$, i.e.
\begin{equation}
\label{eq-pseudoansatz}
\mathcal U_\delta(x+1,y)=e^{i\pi}\mathcal U_\delta(x,y)\,.
\end{equation}
In order to identify the functions $U_1(x,y)$ and $\eta_\delta(x)$, we plug \eqref{eq:ansatz} into \eqref{NLS}, treating
\begin{equation}
\label{eq:xdy}
x\quad\text{and}\quad y:=\delta x\quad\text{as independent variables}.
\end{equation}
This yields
 \begin{equation}
 \label{eq:mathU}
    \left( - (\partial_x + \delta \partial_y)^2 + V(x) + \delta W(x) - \mu_* - \delta  \mu_\sharp \right) \mathcal U_\delta(x,y) = \delta \lvert \mathcal U_\delta(x,y) \rvert^2 \mathcal U_\delta(x,y)\,.
\end{equation}
Now, the strategy proceeds by checking that $U_0$ is consistent with \eqref{eq:mathU} to zero-order in $\delta$, while $U_1$ can be appropriately selected to next order in the parameter $\delta$. Finally, the equation for the remainder term $\eta_\delta$ is determined, and its study will be the core of subsequent sections.


\subsection{Consistency of \texorpdfstring{$U_0$}{U}}
\label{subsec:U0}

Focusing on the zero--order terms in $\delta$ of \eqref{eq:mathU} (and recalling \eqref{eq-pseudoansatz}), one obtains that that $U_0$ has to satisfy
    \begin{equation}\label{eq:U0eq}
    (- \partial_x^2 + V(x) - \mu_*) U_0(x,y) = 0
    \end{equation}
    and the $\pi$--pseudoperiodic condition in $x$. However, these are automatically satisfied by the fact that the operator only affects the $x$ variable and by the choice of $U_0$ made in \eqref{eq:U0} (in view of \eqref{eq:L2pp}), since \eqref{eq:H}, \eqref{eq-ker} and \eqref{eq-eureka} entail
    \[
    \ker_{L^2_{\pi}(\R)} (- \partial_x^2 + V(\cdot)- \mu_*)= \Span \left\{ \Phi_+(\cdot,\pi), \Phi_-(\cdot,\pi) \right\}.
    \]


\subsection{Conditions on \texorpdfstring{$U_1$}{U}}
\label{subsec:U1}

 Notice that in \eqref{eq:mathU} there are order 1 terms in $\delta$ involving both $U_1(x,y)$ and $\eta_\delta(x)$. To determine separately these two unknown functions, we start by observing that a first group of order 1 terms in $\delta$ yields the following equation for $U_1(x,y)$
\begin{equation}
\label{eq-U1}
       (- \partial_x^2 + V(x) - \mu_*) U_1(x,y) = (2\partial_x\partial_y - W(x)+\mu_\sharp) U_0(x,y) + \lvert U_0(x,y) \rvert^2 U_0(x,y)\,.
\end{equation}
If the right hand side of \eqref{eq-U1} were identically zero, we could take $U_1(x,y)\equiv0$. From the technical point of view, this would amount to work directly with $U_0(x,y)+\delta\eta_\delta(x)$ in place of $U_0(x,y)+\delta U_1(x,y)+\delta\eta_\delta(x)$ in \eqref{eq:ansatz}. However, since the proof of Theorem \ref{thm:main1} in this case would be identical to the one we develop throughout for a non--trivial $U_1(x,y)$, with no loss of generality from now on we assume that $(2\partial_x\partial_y - W(x)+\mu_\sharp) U_0(x,y) + \lvert U_0(x,y) \rvert^2 U_0(x,y)\not\equiv0$. 

Now, the operator $- \partial_x^2 + V(\cdot)$ is closed and (as we mentioned in Section \ref{subsec-fundamentals}) has a compact resolvent on $L_\pi^2(\R)$. As a consequence, it is a Fredholm operator, so that \eqref{eq-U1} admits a solution $U_1(\cdot,y)$, for any fixed $y\in\R$, if and only if the right hand side is orthogonal to $\ker_{L^2_{\pi}(\R)} (- \partial_x^2 + V(\cdot)- \mu_*)$; that is, if and only if
     \begin{equation}
     \label{FAT}
     \left\langle (2\partial_x\partial_y - W+\mu_\sharp) U_0(\cdot,y)  + \lvert U_0(\cdot,y) \rvert^2 U_0(\cdot,y), \Phi_\pm(\cdot,\pi) \right\rangle_{L^2([0,1])} = 0\,,
    \end{equation}
where, here and throughout, the the inner product is meant with respect to the sole $x$ variable. By \eqref{eq:U0}, \eqref{eq:c} and \eqref{eq:theta}
\begin{align*}
2 \left\langle \partial_x \partial_y U_0(\cdot,y), \Phi_\pm(\cdot,\pi)\right\rangle_{L^2([0,1])}= &\,2\partial_{y}\Psi_-(y) \left\langle \partial_x\Phi_-(\cdot,\pi), \Phi_\pm(\cdot,\pi)\right\rangle_{L^2([0,1])}\\
&\,+2\partial_{y}\Psi_+(y) \left\langle \partial_x\Phi_+(\cdot,\pi), \Phi_\pm(\cdot,\pi)\right\rangle_{L^2([0,1])}\\
=&\,\pm ic_\sharp\partial_y\Psi_\pm(y)
\end{align*}
(the fact that $\left\langle\partial_x\Phi_\pm(\cdot,\pi),\Phi_\mp(\cdot,\pi)\right\rangle_{L^2_x([0,1])}=0$ following by \eqref{sym_conj} and the $\pi$--pseudoperiodicity of $\Phi_\pm(\cdot,\pi)$), whereas
\begin{align*}
- \left\langle W U_0(\cdot,y), \Phi_\pm(\cdot,\pi) \right\rangle_{L^2([0,1])} = &\, - \Psi_-(y)\left\langle W \Phi_-(\cdot,\pi), \Phi_\pm(\cdot,\pi) \right\rangle_{L^2([0,1])} \\
        &\,- \Psi_+(y)\left\langle W \Phi_+(\cdot,\pi), \Phi_\pm(\cdot,\pi) \right\rangle_{L^2([0,1])}\\
        =&\,-\vartheta_\sharp\Psi_\mp(y)
\end{align*}
(the fact that $\left\langle W\Phi_\pm(\cdot,\pi),\Phi_\pm(\cdot,\pi)\right\rangle_{L^2([0,1])}=0$ following by \eqref{WF}, \eqref{eq-kergenerators}, \eqref{eq-eureka}, \eqref{eq:L2ppe}, \eqref{eq:L2ppo} and standard properties of trigonometric functions). Since the orthonormality of $\Phi_+(\cdot,\pi)$, $\Phi_-(\cdot,\pi)$ also gives
\[
\mu_\sharp \left\langle U_0(\cdot,y), \Phi_\pm(\cdot,\pi) \right\rangle_{L^2([0,1])} =  \mu_\sharp\Psi_\pm(y)
\]
the first part of \eqref{FAT} can be rewritten as
\begin{equation}
\label{eq:U11}
\left\langle (2\partial_x\partial_y - W+\mu_\sharp) U_0(\cdot,y), \Phi_\pm(\cdot,\pi) \right\rangle_{L^2([0,1])}=\pm ic_\sharp\partial_y\Psi_\pm(y)-\vartheta_\sharp\Psi_\mp(y)+\mu_\sharp\Psi_\pm(y).
\end{equation}
Let us now focus on the term $\left\langle |U_0(\cdot,y)|^2U_0(\cdot,y),\Phi_\pm(\cdot,\pi)\right\rangle_{L^2([0,1])}$. Since \eqref{eq:U0} gives
\begin{align*}
|U_0(x,y)|^2U_0(x,y)&=|\Psi_-(y)|^2|\Phi_-(x,\pi)|^2\Psi_-(y)\Phi_-(x,\pi)+\Psi_-^2(y)\overline{\Psi_+}(y)\Phi_-^2(x,\pi)\overline{\Phi_+}(x,\pi)\\[.1cm]
& \hspace{-.5cm}  + \Psi_+^2(y)\overline{\Psi_-}(y)\Phi_+^2(x,\pi)\overline{\Phi_-}(x,\pi)+|\Psi_+(y)|^2|\Phi_+(x,\pi)|^2\Psi_+(y)\Phi_+(x,\pi)\\[.1cm]
& \hspace{-.5cm}  +2|\Psi_-(y)|^2|\Phi_-(x,\pi)|^2\Psi_+(y)\Phi_+(x,\pi)+ 2|\Psi_+(y)|^2|\Phi_+(x,\pi)|^2\Psi_-(y)\Phi_-(x,\pi)
\end{align*}
taking the scalar product with $\Phi_\pm(\cdot,\pi)$ we have
\begin{align*}
\left\langle |U_0(\cdot,y)|^2U_0(\cdot,y),\Phi_\pm(\cdot,\pi)\right\rangle_{L^2([0,1])}=&\, |\Psi_-(y)|^2\Psi_-(y)\int_0^1|\Phi_-(x,\pi)|^2\Phi_-(x,\pi)\overline{\Phi_\pm}(x,\pi)\,dx\\[.1cm]
&\,+\Psi_-^2(y)\overline{\Psi_+}(y)\int_0^1\Phi_-^2(x,\pi)\overline{\Phi_+}(x,\pi)\overline{\Phi_\pm}(x,\pi)\,dx\\[.1cm]
&\,+\Psi_+^2(y)\overline{\Psi_-}(y)\int_0^1\Phi_+^2(x,\pi)\overline{\Phi_-}(x,\pi)\overline{\Phi_\pm}(x,\pi)\,dx\\[.1cm]
&\,+|\Psi_+(y)|^2\Psi_+(y)\int_0^1|\Phi_+(x,\pi)|^2\Phi_+(x,\pi)\overline{\Phi_\pm}(x,\pi)\,dx\\[.1cm]
&\,+2|\Psi_-(y)|^2\Psi_+(y)\int_0^1|\Phi_-(x,\pi)|^2\Phi_+(x,\pi)\overline{\Phi_\pm}(x,\pi)\,dx\\[.1cm]
&\,+2|\Psi_+(y)|^2\Psi_-(y)\int_0^1|\Phi_+(x,\pi)|^2\Phi_-(x,\pi)\overline{\Phi_\pm}(x,\pi)\,dx.
\end{align*}
Moreover, since \eqref{eq-kergenerators}, \eqref{eq-eureka}, \eqref{eq:L2ppe}, \eqref{eq:L2ppo}, \eqref{sym_conj} and standard properties of trigonometric functions imply
\begin{align}
\label{int_phi_j}
    \int_0^1 \lvert \Phi_\pm(x,\pi) \rvert^2 \Phi_\pm(x,\pi) \overline{\Phi_\mp}(x,\pi)\, dx & =\int_0^1\Phi_\pm^2(x,\pi)\overline{\Phi_\mp}(x,\pi)\overline{\Phi_\pm}(x,\pi)\,dx\nonumber\\[.1cm]
     & =\int_0^1 \lvert \Phi_\pm(x,\pi)\rvert^2 \Phi_\mp(x,\pi) \overline{\Phi_\pm}(x,\pi)\,dx=0,
\end{align}
from \eqref{eq:beta1}, \eqref{eq:beta2}, (again) \eqref{sym_conj} and Remark \ref{rem-beta2real} we obtain
\[
\left\langle|U_0(\cdot,y)|^2U_0(\cdot,y),\Phi_-(\cdot,\pi)\right\rangle_{L^2([0,1])}=\beta_1\left(|\Psi_-(y)|^2+2|\Psi_+(y)|^2\right)\Psi_-(y)+\beta_2\Psi_+^2(y)\overline{\Psi_-}(y)
\]
and
\[
\left\langle|U_0(\cdot,y)|^2U_0(\cdot,y),\Phi_+(\cdot,\pi)\right\rangle_{L^2([0,1])}=\beta_2\Psi_-^2(y)\overline{\Psi_+}(y)+\beta_1\left(|\Psi_+(y)|^2+2|\Psi_-(y)|^2\right)\Psi_+(y).
\]
Thus, combining with \eqref{eq:U11}, \eqref{FAT} gives rise to the following system (where we now omit the $y$ dependence of the functions as it is the unique one here)
\[
\left\{
\begin{array}{l}
\displaystyle -ic_\sharp\partial_y\Psi_--\vartheta_\sharp\Psi_++\mu_\sharp\Psi_-+\beta_1\left(|\Psi_-|^2+2|\Psi_+|^2\right)\Psi_-+\beta_2\Psi_+\overline{\Psi_-}\Psi_+=0\\[.2cm]
\displaystyle ic_\sharp\partial_y\Psi_+-\vartheta_\sharp\Psi_-+\mu_\sharp\Psi_++\beta_2\Psi_-\overline{\Psi_+}\Psi_-+\beta_1\left(|\Psi_+|^2+2|\Psi_-|^2\right)\Psi_+=0
\end{array}
\right.
\]
However, suitably rearranging terms and setting $\Psi:=(\Psi_-,\Psi_+)^T$, this system coincides with \eqref{NLD}. Hence, recalling that by assumption $\Psi$ is a solution of \eqref{NLD}, \eqref{FAT} is satisfied with $U_0$ given by \eqref{eq:U0}. As a consequence, denoting by $(-\partial^2_x+V(\cdot)-\mu_*)^{-1}$ the inverse on the orthogonal complement of $\ker_{L^2_{\pi}(\R)} (- \partial_x^2 + V(\cdot)- \mu_*)$, we take $U_1$ as
\begin{equation}\label{eq:U1}
    U_1(\cdot,y)=(-\partial^2_x+V(\cdot)-\mu_*)^{-1}G_1(\cdot,y;U_0),
\end{equation}
for every $y\in\R$, with
\begin{equation}\label{eq:G1}
G_1(x,y;U_0):=(2\partial_x\partial_y - W(x)+\mu_\sharp) U_0(x,y) + \lvert U_0(x,y) \rvert^2 U_0(x,y)\,.
\end{equation}

\begin{remark}
\label{rem:realU}
Note that, in view of the previous choices, the functions $U_0(x,\delta x),\,U_1(x,\delta x)$ identified above are real--valued. The fact that $U_0$ is real--valued follows directly by \eqref{eq:U0}, \eqref{eq:uvsol}, \eqref{eq:uvsimm} and \eqref{sym_conj}. As a consequence, the forcing term $G_1$ defined in \eqref{eq:G1} is real--valued too, so that the same holds for the function $U_1$ given by \eqref{eq:U1}. Moreover, if $\vartheta_\sharp>0$, then $U_0(x,\delta x)$ and $U_1(x,\delta x)$ are even. Indeed, by \eqref{eq:U0}, \eqref{eq:uvsol}, \eqref{eq:uvsimm} and \eqref{sym_even} we have
    \[
    \begin{split}
    U_0(-x,-\delta x)&=\Psi_-(-\delta x)\Phi_-(-x,\pi)+\Psi_+(-\delta x)\Phi_+(-x,\pi) \\
    &=\Psi_+(\delta x)\Phi_+(x,\pi)+\Psi_-(\delta x)\Phi_-(x,\pi) = U_0(x,\delta x)\,.
    \end{split}
    \]
    Since $U_0$ is even, this is true also for the function $G_1$ introduced by \eqref{eq:G1}, in turn ensuring that $U_1$ in \eqref{eq:U1} is even too (since, if $f(x)$ is any function such $(-\partial_x^2+V(x)-\mu_*)f(x)=g(x)$ for some even function $g$, then also $h(x):=f(-x)$ satisfies the same equation since $V$ is even too). Analogously, if $\vartheta_\sharp<0$, then $U_0(x,\delta x),\,U_1(x,\delta x)$ are odd.
\end{remark}


\subsection{The equation for \texorpdfstring{$\eta_\delta$}{eta}} 

Given $U_0$ and $U_1$ as in Sections \ref{subsec:U0} and \ref{subsec:U1}, respectively, in order to find a solution $u_\delta$ to \eqref{NLS} in the form \eqref{eq:ansatz} we are then left to construct a suitable correction term $\eta_\delta$. First, since $U_0$ and $U_1$ are real-valued, we may limit ourselves to search for a real--valued $\eta_\delta$. Then, to identify an equation for $\eta_\delta$ we collect all terms in \eqref{eq:mathU} we have not addressed so far. Since each of these terms comes multiplied by a power of $\delta$ of order not smaller than 1, we factor out $\delta$ and obtain the following equation (where we already used that $U_0,U_1$ and $\eta_\delta$ are real--valued)
\begin{equation}
\label{eq_eta}
\Big(- \partial^2_x + V(x) + \delta W(x) - \mu_\delta \Big)  \eta_\delta (x)= \delta F(\delta,\mu_\sharp, x) + \delta \Big ( L_\delta(\eta_\delta)(x) + \mathcal N_\delta (\eta_\delta)(x) \Big )\,,
\end{equation}
with, in view of \eqref{eq:xdy},
\begin{align}
\label{eq:F_delta}
    F(\delta,\mu_\sharp, x) := & \, \big(\partial_y^2U_0\big)(x,\delta x)+ \big((2\partial_x\partial_y - W(x) + \mu_\sharp) U_1\big)(x,\delta x) + 3 U_0^2(x,\delta x)U_1(x,\delta x)\nonumber\\[.2cm]
    & \,
    +\delta \left ( \big(\partial_y^2U_1\big)(x,\delta x) + 3U_0(x,\delta x) U_1^2(x,\delta x) \right) + \delta^2 U_1^3(x,\delta x),
\end{align}
which does not depend on $\eta_\delta$, and
\begin{align}
    \label{eq:L_delta}
    L_\delta(\eta_\delta)& := 3 U_0^2 \eta_\delta + 6 \delta  U_0U_1 \eta_\delta + 3 \delta^2 U_1^2 \eta_\delta,\\[.2cm]
    \label{eq:N_delta}
    \mathcal N_\delta (\eta_\delta)& := 3 \delta U_0 \eta_\delta^2 + 3 \delta^2 U_1 \eta_\delta^2 + \delta^2 \eta_\delta^3.
\end{align}
If for sufficiently small values of $\delta>0$ we manage to find a real--valued function $\eta_\delta$ (satisfying some suitable growth estimates with respect to $\delta$) satisfying \eqref{eq_eta}, then Theorem \ref{thm:main1} will be proved. This is the content of the next section.


\section{Proof of Theorem \ref{thm:main1}}
\label{sec:proofmain2}

In this section we complete the proof of Theorem \ref{thm:main1} constructing a solution $\eta_\delta$ to \eqref{eq_eta} for $\delta$ small enough. To lighten the notation, from now on we will suppress inessential subscripts, writing $\eta$ in place of $\eta_\delta$. Furthermore, we assume that $V,\, W,\, c_\sharp,\, \vartheta_\sharp,\,,\mu_\sharp,\, \beta_1,\, \beta_2$ fulfill the assumptions of Theorem \ref{thm:main1}. Finally, as already pointed out at the end of the previous section, recall that we are looking for a function $\eta$ that is real--valued, so that, by Remark \ref{rem:realU}, the same will be eventually true for the solution $u_\delta$ to \eqref{NLS} as in \eqref{eq:ansatz}. Moreover, since by Remark \ref{rem:realU} we also know that $U_0(x,\delta x),U_1(x,\delta x)$ are even/odd depending on the sign of $\vartheta_\sharp$, we will exploit these symmetries and actually look for $\eta$ even if $\vartheta_\sharp>0$ and odd if $\vartheta_\sharp<0$. Since the proof is essentially the same in the two cases, in what follows we first develop the argument in full details under the assumption $\vartheta_\sharp>0$, and we then highlight the main differences for $\vartheta_\sharp<0$ at the end of the section.

\medskip
To begin with the analysis of \eqref{eq_eta}, recall that (see Section \ref{sec:fb}) every function in $L^2(\R)$ admits the decomposition \eqref{eq:L2dec} with respect to the family of Bloch waves given by Proposition \ref{prop-Blochsym}. Hence, for every fixed $k \in [0,2\pi]$, multiplying \eqref{eq_eta} by $\Phi_n(\cdot, k)$, $n\in I$ (with $I$ as in \eqref{eq-indices}), allows to rewrite the equation for $\eta$ as the infinite system of equations for the Bloch coefficient $\big( \widetilde \eta _n(k) \big)_{n \in I}$ of $\eta$ given by 
\begin{equation}
\label{eq_tilde_eta}
    \big ( \mu_n(k) - \mu_* \big ) \widetilde \eta_n(k) + \delta \langle W \eta, \Phi_n(\cdot,k) \rangle_{L^2 (\R)} -\delta\mu_\sharp \widetilde\eta_n(k)=  \delta \widetilde R_n(\eta)(k),
\end{equation}
with
\begin{equation}
    \label{eq_remainder}
    \widetilde R_n(\eta)(k) =  \widetilde F_n(k) + \langle L_\delta (\eta), \Phi_n(\cdot, k) \rangle_{L^2(\R)} + \langle \mathcal N_\delta (\eta), \Phi_n(\cdot, k) \rangle_{L^2(\R)}\,.
\end{equation}
Following the general strategy exploited in \cite{FLW17} in the linear setting, we now solve \eqref{eq_remainder} dealing separately with regimes of $k$ close to and far from the dirac point $(\pi,\mu_*)$. 
Precisely, for every $n\in I$ we define
\begin{equation}\label{eq:eta_near}
\widetilde \eta^{\mathrm{near}}_n(k) := 
\begin{cases}
    \chi (\lvert k - \pi \rvert < \delta^\tau ) \widetilde \eta_n (k) &\quad \text{ if } n = +,-\\[.1cm]
    0 &\quad \text{ otherwise};
\end{cases}
\end{equation}
\begin{equation}\label{eq:eta_far}
\widetilde \eta^{\mathrm{far}}_n(k) :=
\begin{cases}
     \chi (\lvert k - \pi \rvert \ge \delta^\tau ) \widetilde \eta_n (k) &\quad \text{ if } n=+,-\\[.1cm]
    \widetilde \eta_n(k) &\quad \text{ otherwise},
\end{cases}
\end{equation}
    for some $\tau >0$ to be chosen. Equivalently, with a shorter notation
    \[
    \begin{split}
    \widetilde \eta^{\mathrm{near}}_n(k) &= \chi \big (\lvert k-\pi\rvert < (\delta_{n,+} + \delta_{n,-} ) \delta^\tau \big ) \widetilde \eta_n(k),\\
     \widetilde \eta^{\mathrm{far}}_n(k) &= \chi \big (\lvert k-\pi\rvert \ge (\delta_{n, +} + \delta_{n, -} ) \delta^\tau \big ) \widetilde \eta_n(k),\\
    \end{split}
    \]
    where $\delta_{a,b}$ denotes the Kronecker delta with $(a,b)\in I\times I$. Clearly, this definition of near and far components for each $\widetilde{\eta}_n$ induces a decomposition on $\eta$:
    \begin{equation}
    \label{eq:deceta}
    \eta(x)= \eta^{\mathrm{near}} (x) + \eta^{\mathrm{far}} (x),
    \end{equation}
    where
    \begin{equation}
    \label{compact_not_near/far}
    \begin{split}
    \eta^{\mathrm{near}}(x) &= \frac 1{2\pi} \int_0^{2\pi} \widetilde \eta_+^{\mathrm{near}}(k) \Phi_+(x,k) dk+\frac 1{2\pi} \int_0^{2\pi} \widetilde \eta_-^{\mathrm{near}}(k) \Phi_-(x,k) dk\,,\\
     \eta^{\mathrm{far}}(x) &= \frac 1{2\pi} \sum_{n \in I} \int_0^{2\pi} \widetilde \eta_n^{\mathrm{far}}(k) \Phi_n(x,k) dk\,.
    \end{split}
      \end{equation}
   Accordingly, multiplying \eqref{eq_tilde_eta} first by $\chi_n^\mathrm{near}(k):=\chi \big (\lvert k-\pi\rvert < (\delta_{n,+} + \delta_{n,-} ) \delta^\tau \big )$ and then by $\chi_n^\mathrm{far}(k)= \chi \big (\lvert k-\pi\rvert \ge (\delta_{n, +} + \delta_{n, -} ) \delta^\tau \big )$, for every $n\in I$, we obtain the equivalent system 
   \begin{equation}
    \label{eq_near/far}
    \begin{cases}
     \big ( \mu_\pm(k) - \mu_* \big ) \widetilde \eta^{\mathrm{near}}_\pm(k) + \delta \chi_\pm^{\mathrm{near}} (k) \langle W \eta, \Phi_\pm(\cdot,k) \rangle_{L^2 (\R)} -\delta \mu_\sharp\widetilde \eta^{\mathrm{near}}_\pm(k) =  \delta \widetilde R^{\mathrm{near}}_\pm(\eta)(k) & \\[.2cm]
     \big ( \mu_n(k) - \mu_* \big ) \widetilde \eta^{\mathrm{far}}_n(k) + \delta \chi_n^{\mathrm{far}} (k) \langle W\eta, \Phi_n(\cdot,k) \rangle_{L^2 (\R)} -\delta \mu_\sharp\widetilde \eta^{\mathrm{far}}_n(k) =  \delta \widetilde R^{\mathrm{far}}_n(\eta)(k) &
         \end{cases}
    \end{equation}
   where the first line corresponds to the two equations for the only non--zero near components with $n=\pm$, and the second line corresponds to the infinitely many equations for the far components for every $n\in I$.

   We thus need to construct suitable functions $\widetilde\eta_n^{\mathrm{near}}, \widetilde\eta_n^{\mathrm{far}}$ solving \eqref{eq_near/far}. Since in the end our aim is to find a real--valued and even function $\eta$, we will look for solutions of \eqref{eq_near/far} in functional spaces with specific symmetries. Precisely, we set 
\begin{equation}
\label{eq:nfspaces}
   \begin{split}
   L^{2,\mathrm{near}}_s(\R)& := \left\{ f \in L^2(\R) \colon \widetilde f_n(k) \equiv \widetilde f_{n}^{\mathrm{near}}(k)\;\; \forall n\in I,\, \widetilde f_\pm(2\pi-k)=\overline{\widetilde f_\mp}(k)=\widetilde f_\mp(k)\right\}\\[.2cm]
   L^{2,\mathrm{far}}_s(\R) & := \left\{ f \in L^2(\R) \colon \widetilde f_n(k) \equiv \widetilde f_{n}^{\mathrm{far}}(k)\;\;\forall n\in I, \, \widetilde f_\pm(2\pi-k)=\overline{\widetilde f_\mp}(k)=\widetilde f_\mp(k)\,,\right.\\
   &\qquad\qquad\qquad\qquad\qquad\qquad\qquad\quad\left.\widetilde f_n(2\pi-k)=\overline{\widetilde f_n}(k)=\widetilde f_n(k)\;\;\forall n\in I_1\right\}
  \end{split}
\end{equation}
($I_1$ defined by \eqref{eq-indices}) and we analogously define the Sobolev spaces $H^{r,\mathrm{near}}_s(\R)$, $H^{r,\mathrm{far}}_s(\R)$, for $r>0$, endowing all these spaces with the usual inner product. 

\begin{proposition}
\label{prop:real}
    The functions in $L^{2,\mathrm{far}}_s(\R)$ and $L^{2,\mathrm{near}}_s(\R)$ are real--valued and even.
\end{proposition}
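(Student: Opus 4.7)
The plan is to use the Bloch decomposition \eqref{eq:L2dec} together with the symmetries of the chosen Bloch waves established in Proposition \ref{prop-Blochsym}. Before any computation, I would first unpack the chains of equalities defining $L^{2,\mathrm{near}}_s(\R)$ and $L^{2,\mathrm{far}}_s(\R)$ in \eqref{eq:nfspaces}: each one hides two pieces of information, a reality condition (all nonzero Bloch coefficients $\widetilde f_\pm$, and in the far case $\widetilde f_n$ for $n\in I_1$, are real--valued) and a reflection condition with respect to $k=\pi$ ($\widetilde f_\pm(2\pi-k)=\widetilde f_\mp(k)$, and $\widetilde f_n(2\pi-k)=\widetilde f_n(k)$ for $n\in I_1$ in the far case).

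To prove that $f$ is real--valued, I would conjugate \eqref{eq:L2dec} and apply \eqref{sym_conj}, which gives $\overline{\Phi_n(x,k)}=\Phi_n(x,2\pi-k)$ for $n\in I_1$ and $\overline{\Phi_\pm(x,k)}=\Phi_\mp(x,2\pi-k)$. The change of variable $k\mapsto 2\pi-k$ on $[0,2\pi]$ then moves the shift from the Bloch wave to the coefficient, and the reality/reflection properties of the $\widetilde f_n$'s transform each $n\in I_1$ contribution back into itself, while the two $\pm$--contributions swap and sum back to the original $\pm$--part of the expansion after relabeling $\mp\leftrightarrow\pm$. Hence $\overline{f(x)}=f(x)$. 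To prove evenness, I would repeat the very same scheme starting from $f(-x)$, using \eqref{sym_even} (namely $\Phi_n(-x,k)=\Phi_n(x,2\pi-k)$ for $n\in I_1$ and $\Phi_\pm(-x,k)=\Phi_\mp(x,2\pi-k)$) in place of \eqref{sym_conj}: after the same $k\mapsto 2\pi-k$ and relabeling, the reflection identities on the coefficients give back $f(x)$. In the near case, the argument simplifies since only the $n=\pm$ terms are present; one only has to check that the cutoff $\chi(|k-\pi|<\delta^\tau)$ is symmetric about $\pi$ and therefore commutes with $k\mapsto 2\pi-k$, preserving the near condition.

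The argument is essentially bookkeeping and I do not anticipate a substantial obstacle. The point that deserves a line of care is the correct pairing of ingredients: the conjugation identities from \eqref{sym_conj} must be matched with the reality part of the conditions on the coefficients to produce $\overline{f}=f$, while the reflection identities from \eqref{sym_even} must be matched with the $(2\pi-k)$--part to produce $f(-\cdot)=f$, and in both cases the change of variable $k\mapsto 2\pi-k$ on $[0,2\pi]$ is what ties the two sides together. Since the first line of \eqref{sym_even} and \eqref{sym_conj} does not cover the isolated point $k=\pi$, these identities are used only almost everywhere in $k$, which is clearly sufficient for the integrals over $[0,2\pi]$.
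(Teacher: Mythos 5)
Your proposal is correct and follows essentially the same route as the paper's proof: conjugate (resp.\ reflect) the Bloch expansion \eqref{eq:L2dec}, apply \eqref{sym_conj} (resp.\ \eqref{sym_even}) to move the argument shift onto the quasi--momentum, use the defining chains in \eqref{eq:nfspaces} to transfer $k\mapsto 2\pi-k$ onto the coefficients (swapping the $\pm$ indices), and conclude by the change of variable $\nu=2\pi-k$. Your observations that the excluded point $k=\pi$ for $n\in I_1$ is irrelevant in the integrals and that the near cutoff is symmetric about $k=\pi$ are details the paper leaves implicit but are handled correctly.
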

\begin{proof}
    Let $f\in L^{2,\mathrm{far}}_s(\R)$, so that by \eqref{eq:L2dec} and \eqref{eq:nfspaces}
    \[
    f(x) = \frac{1}{2\pi}\sum_{n\in I}\int_0^{2\pi} \widetilde f_{n}^{\mathrm{far}}(k)\Phi_n(x,k)\,dk.
    \]
    Then, by \eqref{eq:nfspaces} and \eqref{sym_conj} we have
    \[
    \begin{split}
        \overline{f}(x)&\,=\frac{1}{2\pi}\sum_{n\in I}\int^{2\pi}_0 \overline{\widetilde f_{n}^{\mathrm{far}}}(k)\overline{\Phi_n}(x,k)\,dk\\
        &\,=\frac{1}{2\pi}\sum_{n\in I_1}\int^{2\pi}_0 \overline{\widetilde f_{n}^{\mathrm{far}}}(k)\overline{\Phi_n}(x,k)\,dk\\
        &\quad\,+ \frac{1}{2\pi}\int^{2\pi}_0 \overline{\widetilde f_{+}^{\mathrm{far}}}(k)\overline{\Phi_+}(x,k)\,dk+ \frac{1}{2\pi}\int^{2\pi}_0 \overline{\widetilde f_{-}^{\mathrm{far}}}(k)\overline{\Phi_-}(x,k)\,dk\\
        &\,=\frac{1}{2\pi}\sum_{n\in I_1}\int^{2\pi}_0 \widetilde f_{n}^{\mathrm{far}}(2\pi-k)\Phi_n(x,2\pi-k)\,dk\\
        &\quad\,+ \frac{1}{2\pi}\int^{2\pi}_0 \widetilde f_{-}^{\mathrm{far}}(2\pi-k)\Phi_-(x,2\pi-k)\,dk+ \frac{1}{2\pi}\int^{2\pi}_0 \widetilde f_{+}^{\mathrm{far}}(2\pi-k)\Phi_+(x,2\pi-k)\,dk\\
        &\stackrel{\nu:=2\pi-k}{=}\frac{1}{2\pi}\sum_{n\in I}\int^{2\pi}_0 \widetilde f_{n}^{\mathrm{far}}(\nu)\Phi_n(x,\nu)\,d\nu = f(x)\,,
    \end{split}
    \]
    and
    \[
    \begin{split}
      f(-x)&\,=\frac{1}{2\pi}\sum_{n\in I_1}\int^{2\pi}_0 \widetilde f_{n}^{\mathrm{far}}(k) \Phi_n(-x,k)\,dk\\
      &\quad+\frac{1}{2\pi}\int^{2\pi}_0 \widetilde f_{-}^{\mathrm{far}}(k)\Phi_-(-x,k)\,dk+ \frac{1}{2\pi}\int^{2\pi}_0 \widetilde f_{+}^{\mathrm{far}}(k)\Phi_+(-x,k)\,dk\\
      &=\frac{1}{2\pi}\sum_{n\in I_1}\int^{2\pi}_0 \widetilde f_{n}^{\mathrm{far}}(2\pi-k)\Phi_n(x,2\pi-k)\,dk \\
      &\quad+\frac{1}{2\pi}\int^{2\pi}_0 \widetilde f_{+}^{\mathrm{far}}(2\pi-k)\Phi_+(x,2\pi-k)\,dk+ \frac{1}{2\pi}\int^{2\pi}_0 \widetilde f_{-}^{\mathrm{far}}(2\pi-k)\Phi_+(x,2\pi-k)\,dk\\
        &\stackrel{\nu:=2\pi-k}{=}\frac{1}{2\pi}\sum_{n\in I}\int^{2\pi}_0 \widetilde f_{n}^{\mathrm{far}}(\nu)\Phi_n(x,\nu)\,d\nu =f(x)\,,
    \end{split}
    \]
Analogous computations work for functions in $L^{2,\mathrm{near}}_s(\R)$.
\end{proof}

In view of Proposition \ref{prop:real}, if we find $\eta^\mathrm{near}\in H_s^{2,\mathrm{near}}(\R)$ satisfying the first line of \eqref{eq_near/far} and $\eta^{\mathrm{far}}\in H_s^{2,\mathrm{far}}(\R)$ satisfying the second line of \eqref{eq_near/far}, then the corresponding $\eta$ given by \eqref{eq:deceta} is a real--valued, even solution of \eqref{eq_eta} in $H^2(\R)$. Observe also that \eqref{eq_near/far} is invariant under the symmetries embodied in \eqref{eq:nfspaces}. Indeed, if $\eta^{\mathrm{near}}\in H_s^{2,\mathrm{near}}(\R)$ and $\eta^{\mathrm{far}}\in H_s^{2,\mathrm{far}}(\R)$, then by Proposition \ref{prop-Blochsym}, Proposition \ref{prop:real} and the fact that $W$ is real--valued and even by assumption, one has
\[
\begin{split}
    \overline{\left\langle W\eta,\Phi_n(\cdot,k)\right\rangle}_{L^2(\R)}&=\left\langle W\overline\eta,\overline{\Phi_n}(\cdot,k)\right\rangle_{L^2(\R)}=\left\langle W\eta,\Phi_n(\cdot,2\pi-k)\right\rangle_{L^2(\R)}\\
    \overline{\left\langle W\eta,\Phi_\pm(\cdot,k)\right\rangle}_{L^2(\R)}&=\left\langle W\overline\eta,\overline{\Phi_\pm}(\cdot,k)\right\rangle_{L^2(\R)}=\left\langle W\eta,\Phi_\mp(\cdot,2\pi-k)\right\rangle_{L^2(\R)}
\end{split}
\]
and
\[
\begin{split}
\left\langle W\eta,\Phi_n(\cdot,2\pi-k)\right\rangle_{L^2(\R)}&=\left\langle W\eta,\Phi_n(-\cdot,k)\right\rangle_{L^2(\R)} \\
&=\left\langle W(-\cdot)\eta(-\cdot),\Phi_n(-\cdot,k)\right\rangle_{L^2(\R)}=\left\langle W\eta,\Phi_n(\cdot,k)\right\rangle_{L^2(\R)},
\end{split}
\]
\[
\begin{split}
\left\langle W\eta,\Phi_\pm(\cdot,2\pi-k)\right\rangle_{L^2(\R)}&=\left\langle W\eta,\Phi_\mp(-\cdot,k)\right\rangle_{L^2(\R)} \\
&=\left\langle W(-\cdot)\eta(-\cdot),\Phi_\mp(-\cdot,k)\right\rangle_{L^2(\R)}=\left\langle W\eta,\Phi_\mp(\cdot,k)\right\rangle_{L^2(\R)},
\end{split}
\]
for every $n\in I_1$. The same argument applies to $\widetilde{R}_n(\eta)$ in \eqref{eq_near/far}, as all terms in \eqref{eq:F_delta}, \eqref{eq:L_delta} and \eqref{eq:N_delta} are real--valued and even by Remark \ref{rem:realU}. Recalling also the definition of $\mu_\pm$ given in \eqref{eq:lmeno}, \eqref{eq:lpiu}, this shows that replacing $k$ with $2\pi-k$ does not change \eqref{eq_near/far}, as it reduces to switch the equations for the Bloch coefficients $\widetilde\eta$ associated with $\Phi_\pm$. The same happens when one computes the conjugate of \eqref{eq_near/far} with $2\pi-k$ in place of $k$.

Hence, since to prove Theorem \ref{thm:main1} we are left to construct one specific solution of \eqref{eq_near/far}, and will accomplish this exploiting the additional symmetries introduced in \eqref{eq:nfspaces}. We proceed by first focusing on the far components, solving the second line of \eqref{eq_near/far} with a suitable function $\eta^{\mathrm{far}}=\eta^{\mathrm{far}}(\eta^{\mathrm{near}},\delta)$, and then we use this solution (still dependent on $\eta^{\mathrm{near}})$ to simplify the first line of \eqref{eq_near/far}. This will in turn allow us to find a specific solution $\eta^{\mathrm{near}}$ for the near part of \eqref{eq_near/far} and thus, moving backward, to identify the corresponding $\eta^{\mathrm{far}}$ and complete the proof of Theorem \ref{thm:main1}.


   \subsection{Finding far energy components}
   We begin by finding a solution for the second line of \eqref{eq_near/far} in the form 
   \begin{equation}\label{eq:far(near)}
   \eta^{\mathrm{far}}=  \eta^{\mathrm{far}}(\eta^{\mathrm{near}}, \delta)
   \end{equation}
   for any given $\eta^{\mathrm{near}}\in H_s^{2,\mathrm{near}}(\R)$ and for sufficiently small $\delta>0$. To this end, we define the map 
\[
H^{2,\mathrm{far}}(\R)\times H^{2,\mathrm{near}}(\R)\times\R_+\ni(\phi,\psi,\delta)\longmapsto\mathcal E (\phi, \psi, \delta),
\]
such that for every $n\in I$
  \begin{equation}\label{eq:En}
  \begin{split}
  \widetilde {\mathcal E}_n (\phi, \psi, \delta)(k):=&\,\langle \mathcal E (\phi, \psi,\delta), \Phi_n(\cdot, k) \rangle_{L^2(\R)}\\
 =&- \delta \frac{\chi_n^{\mathrm{far}}(k)}{\mu_n(k) -\mu_* } \langle W ( \phi +\psi), \Phi_n(\cdot,k) \rangle_{L^2(\R)}\\
&+ \delta \frac{\mu_\sharp}{\mu_n(k) - \mu_*} \widetilde \phi_n^{\mathrm{far}}(k) + \delta \frac{\widetilde R_n ^{\mathrm{far}}(\phi+\psi)(k)}{\mu_n(k) - \mu_*}\,.
  \end{split}
  \end{equation}
  In this way, the second line of \eqref{eq_near/far} can be written as
  \begin{equation}
  \label{fp_E}
\eta^{\mathrm{far}}=  \mathcal E(\eta^{\mathrm{far}}, \eta^{\mathrm{near}},\delta)
  \end{equation}
and by the symmetries of the system we have $\mathcal{E}(\phi,\psi,\delta)\in H_s^{2,\mathrm{far}}(\R)$ for every $\phi\in H_s^{2,\mathrm{far}}(\R)$, $\psi\in H_s^{2,\mathrm{near}}(\R)$ and $\delta>0$.

To perform a fixed point argument for $\mathcal{E}$, we first derive the next preliminary estimates. Note that, given the properties of $\Psi$ obtained in Theorem \ref{thm:main2}, the proof below follows along the same line as in \cite[Appendix G]{FLW17}. We briefly sketch it for the sake of completeness.

\begin{lemma}
\label{lem:estU}
As $\delta\to0^+$, in view of \eqref{eq:xdy}, there results
\begin{equation}\label{eq:Uest}
    \begin{split}
        \Vert U_i(\cdot,\delta\cdot)\Vert_{L^2(\R)}&\lesssim \delta^{-1/2} \\[.1cm]
        \Vert (\partial^2_y U_i)(\cdot,\delta\cdot)\Vert_{L^2(\R)}&\lesssim\delta^{-1/2} \\[.1cm]
        \Vert (\partial_x\partial_y U_i)(\cdot,\delta\cdot)\Vert_{L^2(\R)}&\lesssim\delta^{-1/2}\,
    \end{split}\qquad\text{for}\quad i=0,\,1,
\end{equation}
where $U_0$ is as in \eqref{eq:U0} and $U_1$ as in \eqref{eq:U1}.
\end{lemma}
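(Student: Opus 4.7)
The plan is to exploit the finite tensor--product structure in $(x,y)$ of both $U_0$ and $U_1$, combined with the elementary scaling identity $\|f(\delta\,\cdot)\|_{L^2(\R)}^2=\delta^{-1}\|f\|_{L^2(\R)}^2$. The main ingredients will be: (a) the Bloch waves $\Phi_\pm(\cdot,\pi)$ are smooth and $\pi$--pseudoperiodic, hence $\Phi_\pm(\cdot,\pi)$ and $\partial_x\Phi_\pm(\cdot,\pi)$ are bounded on $\R$; (b) each $\Psi_\pm$, together with its derivatives of every order, belongs to $L^2(\R)$ by virtue of the exponential decay provided by \eqref{eq:expdecay} in Theorem \ref{thm:main2}.

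For $i=0$, formula \eqref{eq:U0} shows that $U_0(x,y)$, $(\partial_y^2 U_0)(x,y)$ and $(\partial_x\partial_y U_0)(x,y)$ are each a finite sum of terms of the form $\Psi_\pm^{(j)}(y)\,\partial_x^\ell\Phi_\pm(x,\pi)$ with $j\in\{0,1,2\}$ and $\ell\in\{0,1\}$. The required bound would then follow from
\begin{equation*}
\big\|\Psi_\pm^{(j)}(\delta\,\cdot)\,\partial_x^\ell\Phi_\pm(\cdot,\pi)\big\|_{L^2(\R)}^2 \leq \big\|\partial_x^\ell\Phi_\pm(\cdot,\pi)\big\|_{L^\infty(\R)}^2\int_\R\big|\Psi_\pm^{(j)}(\delta x)\big|^2\,dx =\frac{\big\|\partial_x^\ell\Phi_\pm(\cdot,\pi)\big\|_{L^\infty(\R)}^2\,\|\Psi_\pm^{(j)}\|_{L^2(\R)}^2}{\delta},
\end{equation*}
after summing over the finitely many contributions.

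For $i=1$, I would first expand \eqref{eq:G1} via \eqref{eq:U0} into a finite sum $G_1(x,y)=\sum_k A_k(y)\,B_k(x)$, where each $A_k$ is a polynomial in $\Psi_\pm$, $\overline{\Psi_\pm}$ and $\Psi_\pm'$ (so that $A_k,A_k',A_k''\in L^2(\R)$, again by \eqref{eq:expdecay}), while each $B_k$ is a smooth $1$--pseudoperiodic function built from $\Phi_\pm(\cdot,\pi)$, $\overline{\Phi_\pm}(\cdot,\pi)$, $\partial_x\Phi_\pm(\cdot,\pi)$ and $W$, hence bounded on $\R$ together with all its $x$--derivatives. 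Let $R$ denote the bounded Fredholm inverse $L^2_\pi(\R)\to H^2_\pi(\R)$ obtained by first applying the orthogonal projection $P^\perp$ onto the orthogonal complement of $\ker_{L^2_\pi(\R)}(-\partial_x^2+V-\mu_*)=\mathrm{span}\{\Phi_\pm(\cdot,\pi)\}$ and then inverting; since $G_1(\cdot,y)\in P^\perp L^2_\pi(\R)$ for every $y$ by the solvability condition \eqref{FAT}, linearity gives the tensor expansion
\begin{equation*}
U_1(x,y)=R\,G_1(\cdot,y)(x)=\sum_k A_k(y)\,C_k(x),\qquad C_k:=R\,B_k\in H^2_\pi(\R)\hookrightarrow L^\infty(\R).
\end{equation*}
Elliptic bootstrap (both $V$ and the data being smooth and $1$--periodic) further yields $C_k\in H^3_\pi(\R)$, so that $\partial_x C_k\in L^\infty(\R)$ as well. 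The three required bounds for $i=1$ would then follow from the same scaling computation applied termwise, exactly as in the case $i=0$.

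The only delicate point I foresee is the legitimacy of the termwise inversion in the decomposition of $U_1$: the individual factors $B_k$ need not lie in the orthogonal complement of the kernel, but the kernel contributions cancel across the sum because $G_1(\cdot,y)\in P^\perp L^2_\pi(\R)$ for all $y$, which reduces to the orthogonality condition \eqref{FAT} established in Section~\ref{subsec:U1}. Beyond this bookkeeping, the argument is essentially identical in spirit to \cite[Appendix G]{FLW17}.
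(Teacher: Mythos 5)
Your argument is correct and follows essentially the same route as the paper's proof: both rest on writing $G_1$ (and hence $U_1$, via the Fredholm inverse) as a finite sum of products $A_k(y)B_k(x)$ with $A_k$ Schwartz‑type and $B_k$ smooth pseudoperiodic, and then applying the scaling identity $\|f(\delta\,\cdot)\|_{L^2(\R)}^2=\delta^{-1}\|f\|_{L^2(\R)}^2$ termwise — the paper simply delegates this last step to \cite[Appendix G]{FLW17}, whereas you reprove it directly. The only minor divergence is that for $\partial_x\partial_y U_1$ the paper invokes a second elliptic equation, $(-\partial_x^2+V-\mu_*)\,\partial_x\partial_y U_1=\partial_x\partial_y G_1-V'\partial_y U_1$, while you differentiate the termwise representation and use elliptic regularity of each $C_k$; both are valid, and your handling of the kernel components under the termwise inversion is also sound.
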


\begin{proof}
The estimates for $U_0(\cdot,\delta\cdot)$ directly follow by the explicit formula \eqref{eq:U0} and the regularity of the Bloch waves and of the solution $\Psi$ of \eqref{NLD} given by Theorem \ref{thm:main2}. Let us then prove \eqref{eq:Uest} for $U_1(\cdot,\delta\cdot)$. To this end, observe that by \eqref{eq:U1} the function $U_1(x,y)$ satisfies an equation of the form
\begin{equation}
\label{eq:UG}
(-\partial_x^2 + V(x) - \mu_*) U_1(x,y) = G_1(x,y)
\end{equation}
with
\begin{equation}
\label{eq:G}
G_1(x,y) = \sum_{j=1}^N f_j(x) g_j(y), \quad  f_j \in L^2_{\pi}(\R)\cap C^\infty(\R),\,g_j \in \mathcal S(\R),
\end{equation}
for some $N \in \N$. Indeed, $G_1$ is given by \eqref{eq:G1} and it can be written as
 \begin{equation}
 \label{eq:sumG}
 G_1(x,y)=\sum_{j=1}^{10} f_j(x) g_j(y)
 \end{equation}
 where $f_j$ are combinations of the Bloch waves $\Phi_\pm(\cdot,\pi)$ and of their derivatives, and $g_j$ of the components of the solution $\Psi(y)$ of \eqref{NLD} given by Theorem \ref{thm:main2} and of their derivatives. Now, by \cite[Appendix G]{FLW17}, the first estimate in \eqref{eq:Uest} holds for every function satisfying an equation in the form \eqref{eq:UG} with a right hand side as in \eqref{eq:G}. Hence, the first line of \eqref{eq:Uest} is proved for $U_1$. Furthermore, for any $\alpha \in \N$, $\partial_y^\alpha U_1$ solves
\[
(-\partial_x^2 + V(\cdot) -\mu_*) \partial_y^\alpha U_1 = \partial_y^\alpha G_1
\]
and $\partial_x\partial_y U_1$ satisfies
\[
(-\partial_x^2 + V(\cdot) -\mu_*) \partial_x \partial_yU_1 = \partial_x\partial_y G_1 - V'(x)\partial_yU_1.
\]
Since the right hand side of both equations can be written as in \eqref{eq:G} (observe, in particular, that $\displaystyle U_1(x,y)=\sum_{j=1}^{10}g_j(y)h_j(x)$, with $h_j\in L^2_{\pi}(\R)\cap C^\infty(\R)$, by \eqref{eq:U1}, \eqref{eq:G} and \eqref{eq:sumG}), we obtain the remaining estimates in \eqref{eq:Uest} for $U_1$ again by \cite[Appendix G]{FLW17}.
\end{proof}

In the next proposition we prove the estimates necessary to solve the far equations in the second line of \eqref{eq_near/far}, i.e. \eqref{fp_E}. In what follows we denote by $B^{\mathrm{near}}(R)$, $B^{\mathrm{far}}(R)$ the balls centered at the origin with radius $R$ in $H_s^{2,\mathrm{near}}(\R)$, $H_s^{2,\mathrm{far}}(\R)$ respectively. 

  \begin{proposition}[Lipschitz bounds for $\mathcal E$]
  \label{prop:lipbound}
  Let $\tau\in(0,\frac12)$. Then
  \begin{align}
  \label{eq:farbound}
  \lVert \mathcal E(\phi, \psi, \delta) \rVert_{H^2(\R)}^2\lesssim &\, \delta^{1-2\tau}+\delta^{2-2\tau}\left(\lVert \phi \rVert_{L^2(\R)}^2 + \lVert \psi \rVert_{L^2(\R)}^2\right)\nonumber \\[.1cm]
   &\,+\delta^{4-2\tau}\left(\lVert \phi \rVert_{H^1(\R)}^2 + \lVert \psi \rVert_{H^1(\R)}^2\right)\left(\lVert \phi \rVert_{L^2(\R)}^2 + \lVert \psi \rVert_{L^2(\R)}^2\right)\nonumber\\[.1cm]
   &\,+ \delta^{6-2\tau}\left(\lVert \phi \rVert_{H^1(\R)}^4 + \lVert \psi \rVert_{H^1(\R)}^4\right)\left(\lVert \phi \rVert_{L^2(\R)}^2 + \lVert \psi \rVert_{L^2(\R)}^2\right)
  \end{align}
  for every $(\phi,\psi,\delta)\in H_s^{2,\mathrm{far}}(\R)\times H_s^{2,\mathrm{near}}(\R)\times(0,1)$, and
  \begin{multline}
  \label{eq:Efarlip}
  \lVert \mathcal E (\phi_1, \psi_1 , \delta)\,- \mathcal E(\phi_2, \psi_2 , \delta) \rVert_{H^2(\R)}^2\lesssim \delta^{2-2\tau}\left(\lVert \phi_1-\phi_2 \rVert_{L^2(\R)}^2 + \lVert \psi_1-\psi_2 \rVert_{L^2(\R)}^2\right) \\[.1cm]
   +\delta^{4-2\tau}\left(\lVert \phi_1 \rVert_{H^1(\R)}^2+\lVert \phi_2 \rVert_{H^1(\R)}^2 + \lVert \psi_1 \rVert_{H^1(\R)}^2+ \lVert \psi_2 \rVert_{H^1(\R)}^2\right)\,\cdot\\[.1cm]
   \hspace{7cm}\cdot\,\left(\lVert \phi_1-\phi_2 \rVert_{L^2(\R)}^2 + \lVert \psi_1-\psi_2 \rVert_{L^2(\R)}^2\right)\\[.1cm]
   \,+ \delta^{6-2\tau}\left(\lVert \phi_1 \rVert_{H^1(\R)}^4+\lVert \phi_2 \rVert_{H^1(\R)}^4 + \lVert \psi_1 \rVert_{H^1(\R)}^4+ \lVert \psi_2 \rVert_{H^1(\R)}^4\right)\,\cdot\\[.1cm]
   \cdot\,\left(\lVert \phi_1-\phi_2 \rVert_{L^2(\R)}^2 + \lVert \psi_1-\psi_2 \rVert_{L^2(\R)}^2\right)
  \end{multline}
  for every $(\phi_1,\psi_1,\delta),\,(\phi_2,\psi_2,\delta)\in H_s^{2,\mathrm{far}}(\R)\times H_s^{2,\mathrm{near}}(\R)\times(0,1)$. In particular, for every $R>0$, there exists $\widetilde\delta=\widetilde\delta(\tau,R)\in(0,1)$ such that, if $\rho_\delta := \gamma\,\delta^{\frac 12 - \tau}$, with $\gamma>0$ large enough, and $\psi \in B^{\mathrm{near}}(R)$, then
    \begin{equation}
  \label{eq-ballpreserve}
  \phi \in B^{\mathrm{far}}(\rho_\delta) \implies \mathcal E (\phi,\psi,\delta) \in B^{\mathrm{far}} (\rho_\delta),
  \end{equation}
  and
      \begin{equation}\label{eq:lipbound}
      \lVert \mathcal E (\phi_1, \psi, \delta) - \mathcal E (\phi_2, \psi, \delta) \rVert_{H^2(\R)} <\lVert \phi_1 - \phi_2 \rVert_{L^2(\R)},\qquad\forall \phi_1,\, \phi_2 \in B^{\mathrm{far}}(\rho_\delta)\subseteq H_s^{2,\mathrm{far}}(\R),
      \end{equation}
      for all $\delta\in(0,\widetilde\delta)$.
  \end{proposition}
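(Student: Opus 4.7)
The plan is to reduce everything to the Bloch side and exploit the explicit form \eqref{eq:En}. By the norm equivalence \eqref{eq:sobeq},
\[
\|\mathcal{E}(\phi,\psi,\delta)\|_{H^2(\R)}^2 \sim \sum_{n\in I}\int_0^{2\pi}(1+n^2)^2\,|\widetilde{\mathcal{E}}_n(\phi,\psi,\delta)(k)|^2\,dk,
\]
so it suffices to bound the three summands of \eqref{eq:En} separately. The essential quantitative ingredient is the pointwise bound
\[
\frac{(1+n^2)^2\,\chi_n^{\mathrm{far}}(k)^2}{|\mu_n(k)-\mu_*|^2} \lesssim \delta^{-2\tau},
\]
uniformly in $n\in I$ and $k\in[0,2\pi]$: for $n=\pm$ it follows from \eqref{eq:linband} combined with the cutoff $|k-\pi|\geq\delta^\tau$, whereas for $n\in I_1$ the denominator is uniformly bounded below and $(1+n^2)^2/|\mu_n(k)-\mu_*|^2$ is bounded by the Weyl asymptotics \eqref{eq-weylas}.

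With this at hand, the two linear contributions to \eqref{eq:En} — those with $W(\phi+\psi)$ and $\mu_\sharp\widetilde\phi_n^{\mathrm{far}}$ — are handled by the Parseval identity \eqref{eq-parseval} and yield the $\delta^{2-2\tau}(\|\phi\|_{L^2}^2+\|\psi\|_{L^2}^2)$ term of \eqref{eq:farbound}. The remaining contribution $\widetilde R_n^{\mathrm{far}}(\phi+\psi)$ is, by \eqref{eq_remainder}, the $n$-th Bloch coefficient of $F+L_\delta(\eta)+\mathcal{N}_\delta(\eta)$ with $\eta=\phi+\psi$. For $F$, defined by \eqref{eq:F_delta}, Lemma \ref{lem:estU} and the Schwartz-type decay of $\Psi$ provided by Theorem \ref{thm:main2} give $\|F\|_{L^2(\R)}\lesssim \delta^{-1/2}$, which after multiplication by $\delta^{2-2\tau}$ produces the leading $\delta^{1-2\tau}$ term. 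For $L_\delta$ in \eqref{eq:L_delta}, the pointwise boundedness of $U_0(\cdot,\delta\cdot)$ and $U_1(\cdot,\delta\cdot)$ (products of bounded Bloch waves at $k=\pi$ and the decaying spinor $\Psi$) yields $\|L_\delta(\eta)\|_{L^2}\lesssim \|\eta\|_{L^2}$. For $\mathcal{N}_\delta$ in \eqref{eq:N_delta}, the one-dimensional embedding $H^1(\R)\hookrightarrow L^\infty(\R)$ gives $\|\mathcal{N}_\delta(\eta)\|_{L^2}\lesssim \delta\|\eta\|_{H^1}\|\eta\|_{L^2}+\delta^2\|\eta\|_{H^1}^2\|\eta\|_{L^2}$, whose square multiplied by $\delta^{2-2\tau}$ accounts exactly for the $\delta^{4-2\tau}$ and $\delta^{6-2\tau}$ terms of \eqref{eq:farbound}.

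The Lipschitz bound \eqref{eq:Efarlip} is obtained by running the same scheme on $\mathcal{E}(\phi_1,\psi_1,\delta)-\mathcal{E}(\phi_2,\psi_2,\delta)$, exploiting the factorizations
\[
\eta_1^2-\eta_2^2=(\eta_1+\eta_2)(\eta_1-\eta_2),\qquad \eta_1^3-\eta_2^3=(\eta_1^2+\eta_1\eta_2+\eta_2^2)(\eta_1-\eta_2),
\]
and again the Sobolev embedding to push the residual $\eta_i$ factors into $H^1$-norms. Finally, substituting $\rho_\delta=\gamma\delta^{\frac12-\tau}$ with $\tau\in(0,\tfrac12)$ into \eqref{eq:farbound} on $B^{\mathrm{far}}(\rho_\delta)\times B^{\mathrm{near}}(R)$ gives an estimate of the form $\|\mathcal{E}(\phi,\psi,\delta)\|_{H^2}^2\leq C\big(1+R^2\delta+\gamma^2\delta^{2-2\tau}+\cdots\big)\delta^{1-2\tau}$, which is at most $\rho_\delta^2=\gamma^2\delta^{1-2\tau}$ as soon as $\gamma=\gamma(R)$ is chosen large and $\delta$ is small; analogously, every term on the right-hand side of \eqref{eq:Efarlip} with $\psi_1=\psi_2\in B^{\mathrm{near}}(R)$ and $\phi_i\in B^{\mathrm{far}}(\rho_\delta)$ carries at least a factor $\delta^{2-2\tau}$ vanishing as $\delta\to0^+$, so \eqref{eq:lipbound} follows for $\delta<\widetilde\delta(\tau,R)$. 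The bookkeeping obstacle I would watch most carefully is the correct combination of the $\delta^{-\tau}$ factor coming from the small denominators $(\mu_n(k)-\mu_*)^{-1}$ with the $\delta^{-1/2}$ blow-up of $\|U_i(\cdot,\delta\cdot)\|_{H^2}$ from Lemma \ref{lem:estU}: only once these powers are tracked correctly do the four exponents $1-2\tau,\,2-2\tau,\,4-2\tau,\,6-2\tau$ in \eqref{eq:farbound} emerge, and they are precisely what makes the subsequent fixed-point scheme close with the choice $\rho_\delta=\gamma\delta^{1/2-\tau}$.
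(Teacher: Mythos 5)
Your proposal is correct and follows essentially the same route as the paper's proof: the Bloch-side norm equivalence, the small-denominator bound $\delta^{-2\tau}$ on the far region for $n=\pm$ (uniform in $n\in I_1$ via the Weyl asymptotics, which the paper imports from \cite[Lemma 6.2]{FLW17}), the $L^2$ bounds $\lVert F\rVert_{L^2}\lesssim\delta^{-1/2}$, $\lVert L_\delta(\eta)\rVert_{L^2}\lesssim\lVert\eta\rVert_{L^2}$ and the Sobolev-embedding estimate for $\mathcal N_\delta$, the algebraic factorizations for the Lipschitz bound, and the substitution $\rho_\delta=\gamma\delta^{1/2-\tau}$ to close \eqref{eq-ballpreserve} and \eqref{eq:lipbound}. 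No gaps.
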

  
  \begin{proof}
 We start by estimating the $H^2(\R)$ norm of $\mathcal E$. Recalling the equivalence \eqref{eq:sobeq} between the norm induced by the Bloch decomposition and the standard Sobolev norm, suitably adapted to the Bloch waves given by Proposition \ref{prop-Blochsym}, we have
  \begin{equation}
  \label{Hs_E}
      \lVert\mathcal  E(\phi, \psi, \delta) \lVert_{H^2(\R)}^2 \sim \sum_{n\in I} \ell(n)^2 \int_0^{2\pi} \lvert \widetilde {\mathcal E}_n(\phi, \psi,\delta) (k) \rvert ^2 \,dk,
    \end{equation}
    with
    \[
    \ell(n):=\left\{\begin{array}{ll}1+n^2,&n\neq+,-,\\[.1cm]1,& n=+,-.\end{array}\right.
    \]
  Now,  by \cite[Lemma 6.2]{FLW17}, if $n \ne +,-$, then we have
  \begin{multline*}
    \big  \lvert \widetilde {\mathcal E}_n (\phi, \psi, \delta)\big \rvert^2 \lesssim \frac{\delta ^2}{(1+n^2)^2}\, \cdot\\[.1cm]
    \cdot\,\Big \{ \big \lvert \langle W (\phi +\psi), \Phi_n(\cdot,k) \rangle_{L^2(\R)} \big \rvert^2 + \mu_\sharp^2 \lvert \widetilde \phi_n^{\mathrm{far}}(k) \rvert^2 + \lvert \widetilde R_n ^{\mathrm{far}}(\phi+\psi)(k)\rvert^2 \Big \},
  \end{multline*}
  while
  \[
  \big  \lvert \widetilde {\mathcal E}_\pm (\phi, \psi, \delta)\big \rvert^2 \lesssim \delta ^{2(1-\tau)} \Big \{ \big \lvert \langle W (\phi +\psi), \Phi_\pm(\cdot,k) \rangle_{L^2(\R)} \big \rvert^2 + \mu_\sharp^2 \lvert \widetilde \phi_\pm^{\mathrm{far}}(k) \rvert^2 + \lvert \widetilde R_\pm ^{\mathrm{far}}(\phi+\psi)(k)\rvert^2 \Big \}\,.
  \]
  Therefore, from \eqref{Hs_E}
  \[
  \begin{split}
    \lVert\mathcal  E(\phi, \psi, \delta) \lVert_{H^2(\R)}^2 \lesssim &\, \delta^{2(1-\tau)}  \sum_{n\in I} \int_0^{2\pi}  \left ( \big \lvert \langle W \phi , \Phi_n(\cdot,k) \rangle_{L^2(\R)} \big \rvert^2+\right. \\[.1cm]
    &\, \left.+ \big \lvert \langle W \psi , \Phi_n(\cdot,k) \rangle_{L^2(\R)} \big \rvert^2+ \mu_\sharp^2 \lvert \widetilde \phi_n^{\mathrm{far}}(k) \rvert^2 + \lvert \widetilde R_n ^{\mathrm{far}}(\phi+\psi)(k) \rvert^2 \right) dk \\[.1cm]
    \lesssim &\, \delta^{2(1-\tau)} \left( \lVert W \phi \rVert_{L^{2}(\R)}^2 +\lVert W \psi \rVert_{L^{2}(\R)}^2 + \right.\\[.1cm]
    &\, \left.+\mu_\sharp^2 \lVert \phi \rVert_{L^{2}(\R)}^2 + \sum_{n\in I} \int_0^{2\pi} \lvert \widetilde R^{\mathrm{far}}_n(\phi+\psi) (k)\rvert ^2 dk \right).
  \end{split}
  \]
Now, from \eqref{eq_remainder} we have 
\[
\begin{split}
\sum_{n\in I}\int_0^{2\pi} \lvert \widetilde R_n^{\mathrm{far}}(\phi+\psi) (k) \rvert^2 dk & \\
& \hspace{-4cm}\le \sum_{n\in I}\int_0^{2\pi} \Big (\lvert \widetilde F_n(k) \rvert^2 + \lvert \langle L_\delta (\phi+\psi), \Phi_n(\cdot, k) \rangle_{L^2(\R)}  \rvert^2 + \lvert \langle \mathcal N_\delta (\phi+\psi), \Phi_n(\cdot, k) \rangle_{L^2(\R)} \rvert^2 \Big  )dk\\[.1cm]
& \hspace{-4cm}\lesssim \lVert F (\delta, \mu_\sharp, \cdot )\rVert _{L^2(\R)}^2 + \lVert L_\delta (\phi + \psi) \rVert_{L^2(\R)}^2 + \lVert \mathcal N_\delta (\phi+\psi) \rVert_{L^2(\R)}^2\,.
\end{split}
\]
Using Lemma \ref{lem:estU}, the fact that $W \in L^\infty (\R)$ and the fact that $U_0(\cdot, \delta\cdot)$, $U_1(\cdot, \delta\cdot)$ are bounded in $L^\infty(\R)$ uniformly in $\delta$ by construction, we obtain the following bounds
\begin{equation}
\label{est_F}
\lVert F(\delta, \mu_\sharp, \cdot) \Vert_{L^2(\R)}^2 \lesssim \delta^{-1},
\end{equation}
\[
\lVert L_\delta (\phi + \psi) \rVert_{L^2(\R)}^2 \lesssim  ( \lVert \phi \rVert^2_{L^2(\R)} + \lVert \psi \rVert_{L^2(\R)}^2), 
\]
\begin{align}
\label{est_N}
\lVert \mathcal N_\delta (\phi + \psi) \rVert_{L^2(\R)}^2 \lesssim &\,\delta^2\|\phi+\psi\|_{L^\infty(\R)}^2\|\phi+\psi\|_{L^2(\R)}^2\nonumber\\
&+ \delta^4\lVert \phi + \psi \rVert_{L^\infty(\R)}^2 \lVert \phi + \psi \rVert_{L^2(\R)}^2 + \delta^4\lVert \phi + \psi \rVert_{L^\infty(\R)}^4 \lVert \phi + \psi \rVert_{L^2(\R)}^2\,. 
\end{align}
Finally, using Sobolev embeddings, again that $W \in L^\infty (\R)$, and suitably rearranging terms we get \eqref{eq:farbound}.

Let us, now, estimate the quantity $\lVert \mathcal E(\phi_1, \psi_1,\delta ) - \mathcal E(\phi_2, \psi_2, \delta) \rVert _{H^2(\R)} $. We report here the argument in full details for the nonlinear term only, the linear one being analogous (and simpler). By \eqref{eq:N_delta},
\[
\begin{split}
\lvert \mathcal N_\delta (\phi_1 + \psi_1) - \mathcal N_\delta (\phi_2 + \psi_2) \rvert ^2\lesssim \delta^2 \Big ( & (|U_0|+\delta |U_1|)\big \lvert \lvert \phi_1 + \psi_1 \rvert^2 - \lvert \phi_2 + \psi_2 \rvert^2 \big \rvert \\
& + \delta\big \lvert (\phi_1 + \psi_1) \lvert \phi_1 + \psi_1 \rvert ^2 - (\phi_2 + \psi_2) \lvert \phi_2 + \psi_2 \rvert ^2 \big \rvert  \Big )^2\,.
\end{split}
\]
To estimate the terms on the right hand side observe that 
    \[
    \begin{split}
       (\lvert \phi_1 + \psi_1 \rvert^2 - \lvert \phi_2 + \psi_2 \rvert^2 ) &=  (\lvert \phi_1 + \psi_1 \rvert + \lvert \phi_2 + \psi_2 \rvert) (\lvert \phi_1 + \psi_1 \rvert - \lvert \phi_2 + \psi_2 \rvert)  \\
        & \le  (\lvert \phi_1 + \psi_1 \rvert + \lvert \phi_2 + \psi_2 \rvert)  ( \lvert \phi_1 - \phi_2 \rvert + \lvert \psi_1 - \psi_2 \rvert )\,.
    \end{split}
    \]
Moreover, since for every $\alpha \in \N$ the function $f(u) = \lvert u \rvert^{\alpha-1} u$ satisfies $\lvert f(z) - f(w) \rvert \lesssim_\alpha (\lvert z \rvert + \lvert w \rvert)^{\alpha -1} \lvert z-w \rvert$, $\forall z,w \in \C$, we also have
    \[
     \lvert (\phi_1 + \psi_1) \lvert \phi_1 + \psi_1 \rvert ^2 - (\phi_2 + \psi_2) \lvert \phi_2 + \psi_2 \rvert ^2 \big \rvert  \le (\lvert \phi_1 + \psi_1 \rvert + \lvert \phi_2 + \psi_2 \rvert )^2( \lvert \phi_1 - \phi_2 \rvert + \lvert \psi_1 - \psi_2 \rvert ).
    \]
Arguing similarly for the linear terms and combining all estimates together finally yields
\[
\begin{split}
      \lVert \mathcal E (\phi_1, \psi_1 , \delta)&\,- \mathcal E(\phi_2, \psi_2 , \delta) \rVert_{H^2(\R)}^2 \\
      &\,\lesssim \delta^{2(1-\tau)}  \Big ( \lVert W \rVert_{L^\infty(\R)}^2 \big ( \lVert \phi_1 - \phi_2 \rVert_{L^2(\R)}^2  + \lVert \psi_1 - \psi_2 \rVert_{L^2(\R)}^2 \big ) \\
    & + \mu_\sharp^2 \lVert \phi_1 - \phi_2 \rVert_{L^2(\R)}^2 + \lVert \phi_1 - \phi_2 \rVert_{L^2(\R)}^2 + \lVert \psi_1 - \psi_2 \rVert_{L^2(\R)}^2  \\
    & +  \delta^2\big ( \lVert \phi_1 + \psi_1 \rVert_{L^\infty(\R)}^2 + \lVert \phi_2 + \psi_2 \rVert_{L^\infty(\R)}^2 \big ) \big ( \lVert \phi_1 - \phi_2 \rVert_{L^2(\R)}^2 + \lVert \psi_1 - \psi_2 \rVert_{L^2(\R)}^2 \big ) \\
    & + \delta^4\big ( \lVert \phi_1 + \psi_1 \rVert_{L^\infty(\R)}^4 + \lVert \phi_2 + \psi_2 \rVert_{L^\infty(\R)}^4 \big )  \big ( \lVert \phi_1 - \phi_2 \rVert_{L^2(\R)}^2 + \lVert \psi_1 - \psi_2 \rVert_{L^2(\R)}^2 \big )\Big )\,,
\end{split}
\]
that by Sobolev embeddings completes the proof of \eqref{eq:Efarlip}. for a suitable choice of $\delta_0=\delta_0(\tau,R)$.

Finally, the proofs of \eqref{eq-ballpreserve} and \eqref{eq:lipbound} are straightforward consequences of \eqref{eq:farbound} and \eqref{eq:Efarlip}
\end{proof}

In view of Proposition \ref{prop:lipbound}, it is immediate to solve the second line of \eqref{eq_near/far} with a fixed point argument and thus define $\eta^{\mathrm{far}}$ as a function of $\eta^{\mathrm{near}}$.

\begin{corol}
\label{cor:solfar}
Let $\tau\in(0,\frac12)$. For every $\eta^{\mathrm{near}} \in H^{2,\mathrm{near}}_s(\R)$, there exists $\widetilde\delta=\widetilde\delta(\tau,\eta^{\mathrm{near}})\in(0,1)$ such that $\mathcal{E}(\cdot,\eta^{\mathrm{near}},\delta)$ is a contraction on $B^{\mathrm{far}}(\rho_\delta) \subseteq H^{2,\mathrm{far}}_s(\R)$, with $\rho_\delta$ defined in Proposition \ref{prop:lipbound}, for all $\delta\in(0,\widetilde\delta)$. In particular, for every $\eta^{\mathrm{near}} \in H^{2,\mathrm{near}}_s(\R)$ there exists a unique solution $\eta^{\mathrm{far}} = \eta^{\mathrm{far}}(\eta^{\mathrm{near}}, \delta) \in B^{\mathrm{far}}(\rho_\delta) \subseteq H^{2,\mathrm{far}}_s(\R)$ of \eqref{fp_E}, namely of the second line of \eqref{eq_near/far}, for all $\delta\in(0,\widetilde\delta)$.
\end{corol}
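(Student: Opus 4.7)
The statement is a direct application of Banach's contraction mapping theorem, and the required analytic estimates have already been collected in Proposition \ref{prop:lipbound}. My plan is to organize its three--step application.

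First, I fix an arbitrary $\eta^{\mathrm{near}}\in H^{2,\mathrm{near}}_s(\R)$, set $R:=\|\eta^{\mathrm{near}}\|_{H^2(\R)}$, and consider the map
\[
T_\delta\colon H^{2,\mathrm{far}}_s(\R)\to H^{2,\mathrm{far}}_s(\R),\qquad T_\delta(\phi):=\mathcal E(\phi,\eta^{\mathrm{near}},\delta).
\]
That $T_\delta$ genuinely takes values in $H^{2,\mathrm{far}}_s(\R)$ (and not just in $H^2(\R)$) is exactly the symmetry invariance noted right after \eqref{fp_E}, and the space $H^{2,\mathrm{far}}_s(\R)$ is a closed subspace of $H^2(\R)$ since both the far--support constraint and the symmetry constraints in \eqref{eq:nfspaces} are stable under $L^2$--convergence of Bloch coefficients. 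In particular $B^{\mathrm{far}}(\rho_\delta)$ is a complete metric space for the $H^2$--distance.

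Next, I apply Proposition \ref{prop:lipbound} with $\psi=\eta^{\mathrm{near}}\in B^{\mathrm{near}}(R)$. This furnishes a threshold $\widetilde\delta=\widetilde\delta(\tau,R)=\widetilde\delta(\tau,\eta^{\mathrm{near}})\in(0,1)$ such that, for every $\delta\in(0,\widetilde\delta)$:
\begin{itemize}
\item by \eqref{eq-ballpreserve}, $T_\delta$ maps $B^{\mathrm{far}}(\rho_\delta)$ into itself;
\item by \eqref{eq:lipbound} together with the trivial bound $\|\cdot\|_{L^2(\R)}\le\|\cdot\|_{H^2(\R)}$,
\[
\|T_\delta(\phi_1)-T_\delta(\phi_2)\|_{H^2(\R)}<\|\phi_1-\phi_2\|_{L^2(\R)}\le\|\phi_1-\phi_2\|_{H^2(\R)}
\]
for all $\phi_1,\phi_2\in B^{\mathrm{far}}(\rho_\delta)$.
\end{itemize}
Inspecting \eqref{eq:Efarlip}, whose leading contribution is $\delta^{2-2\tau}$ (recall $\tau<1/2$), one sees that after possibly reducing $\widetilde\delta$, still only in terms of $\tau$ and $\eta^{\mathrm{near}}$, the Lipschitz constant of $T_\delta$ in the $H^2$--metric can be made bounded by some fixed $q\in(0,1)$ uniformly on $B^{\mathrm{far}}(\rho_\delta)$. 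Hence $T_\delta$ is a strict contraction on the complete metric space $B^{\mathrm{far}}(\rho_\delta)$.

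Finally, Banach's fixed--point theorem yields a unique $\eta^{\mathrm{far}}=\eta^{\mathrm{far}}(\eta^{\mathrm{near}},\delta)\in B^{\mathrm{far}}(\rho_\delta)$ with $T_\delta(\eta^{\mathrm{far}})=\eta^{\mathrm{far}}$, that is, a unique solution of \eqref{fp_E} in the ball, which is precisely the second line of \eqref{eq_near/far}. No real obstruction is expected in carrying out this plan: the entire analytic content (Lipschitz bounds, self--map property, smallness of the contraction constant) is already packaged in Proposition \ref{prop:lipbound}, and the only care required is the (elementary) check that the symmetry constraints in \eqref{eq:nfspaces} are preserved by $\mathcal E$, so that the fixed point lies in $H^{2,\mathrm{far}}_s(\R)$ rather than in the larger space $H^2(\R)$.
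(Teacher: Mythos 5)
Your proposal is correct and follows exactly the route the paper intends: the paper's own proof of this corollary is the one--line observation that it is a straightforward consequence of Proposition \ref{prop:lipbound}, and you have simply unpacked that into the standard Banach fixed--point argument (completeness of $B^{\mathrm{far}}(\rho_\delta)$, the self--map property \eqref{eq-ballpreserve}, and the contraction bound extracted from \eqref{eq:Efarlip}--\eqref{eq:lipbound}). Your extra care in noting that \eqref{eq:lipbound} alone gives only a strict inequality and that a uniform contraction constant $q<1$ must be read off from the $\delta^{2-2\tau}$ leading term of \eqref{eq:Efarlip} is a legitimate and welcome refinement, not a deviation.
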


\begin{proof}
    This is a straightforward consequence of Proposition \ref{prop:lipbound}.  
\end{proof}


\subsection{The Dirac--type equation for near energy components}

In view of the previous subsection, to complete the proof of Theorem \ref{thm:main1} we are left to find a suitable solution $\eta^{\mathrm{near}}$ to the first line of \eqref{eq_near/far}, that is to solve the two equations
    \begin{multline}
    \label{eq:rewneareq1}
        (\mu_+(k)-\mu_*)\,\widetilde\eta^{\mathrm{near}}_{+}(k)+\delta\,\chi(\vert k-\pi\vert<\delta^\tau)\langle W(\eta^{\mathrm{near}}+\eta^{\mathrm{far}}), \Phi_+(\cdot,k) \rangle_{L^2(\R)}\\-\delta\mu_\sharp\widetilde\eta^{\mathrm{near}}_{+}(k)
        =\delta\,\chi(\vert k-\pi\vert<\delta^\tau)\widetilde R_{+}(\eta^{\mathrm{near}}+\eta^{\mathrm{far}})(k)
    \end{multline}
  \begin{multline}\label{eq:rewneareq2}
        (\mu_-(k)-\mu_*)\,\widetilde\eta^{\mathrm{near}}_-(k)+\delta\,\chi(\vert k-\pi\vert<\delta^\tau)\langle W(\eta^{\mathrm{near}}+\eta^{\mathrm{far}}), \Phi_-(\cdot,k) \rangle_{L^2(\R)}\\-\delta\mu_\sharp\widetilde\eta^{\mathrm{near}}_-(k) 
        =\delta\,\chi(\vert k-\pi\vert<\delta^\tau)\widetilde R_-(\eta^{\mathrm{near}}+\eta^{\mathrm{far}})(k)\,,
    \end{multline}
Recall that our aim is to find a solution  $\eta^{\mathrm{near}}$ of the first line of \eqref{eq_near/far} in the space $H_s^{2,\mathrm{near}}(\R)$ embodying the symmetries introduced in \eqref{eq:nfspaces}, i.e. we want to construct $\widetilde\eta_\pm^{\mathrm{near}}$ solving \eqref{eq:rewneareq1}, \eqref{eq:rewneareq2} and satisfying the additional constraints
 \begin{equation}
     \label{eq:sympm}
      \begin{cases}
    \widetilde \eta_{\pm}^{\mathrm{near}}(2\pi-k)=\overline{\widetilde\eta_\mp^{\mathrm{near}}(k)} \\[.1cm]
    \widetilde \eta_{\pm}^{\mathrm{near}}(2\pi-k)=\widetilde\eta_\mp^{\mathrm{near}}(k)\,.
    \end{cases}
 \end{equation}
To do this, let us first rewrite \eqref{eq:rewneareq1}, \eqref{eq:rewneareq2} properly {\bf (note that to simplify notations we omit the superscript ``near'' as much as possible)}. Following the same strategy exploited in \cite{FLW17} in the linear setting, we look for a solution in the form 
\begin{equation}
    \label{eq:tilde_eta_pm}
\widetilde\eta_\pm(k)=:\widehat\eta_\pm \left(\frac{k-\pi}{\delta}\right)=\chi\left(\frac{\vert k-\pi\vert}{\delta}<\delta^{\tau-1}\right)\widehat\eta_\pm \left(\frac{k-\pi}{\delta}\right)\,.
\end{equation}
namely we search for solutions that are localized Fourier transforms around $k=\pi$ of another function, to be determined. Equivalently, in term of the new variable 
\begin{equation}\label{eq:xi}
\xi:=\frac{k-\pi}{\delta}
\end{equation}
one has
\begin{equation}\label{eq:bl}
\widetilde\eta_\pm(\pi+\delta\xi)=\widehat\eta_\pm(\xi)=\chi\left(\vert\xi\vert <\delta^{\tau-1}\right)\widehat\eta_\pm \left(\xi\right)\,.
\end{equation}
Recall also that $\Vert \widehat\zeta\Vert^2_{L^{2,1}(\R)}=\Vert \zeta \Vert^2_{H^1(\R)}$, for every function $\zeta:\R\to\C$ that admits Fourier transform. Now, since \eqref{eq:rewneareq1}, \eqref{eq:rewneareq2} involve the eigenvalues $\mu_\pm(k)$ for $k$ in a small neighbourhood of $\pi$, exploiting the smooth reparametrization \eqref{eq:linband} we can write 
\begin{equation}\label{eq:bandtaylor}
\begin{split}
    \mu_\pm(k)-\mu_*=\delta\mu'_\pm(\pi)\xi+\frac{1}{2}(\delta\xi)^2\,\mu''_\pm(\xi^{\delta}_\pm) =\pm\delta\, c_\sharp\,\xi+\frac{1}{2}(\delta\xi)^2\,\mu''_\pm(\xi^{\delta}_\pm)
    \end{split}
\end{equation}
with $\xi^\delta_\pm\in\left(\min\{\pi,\pi+\delta\xi\},\max\{\pi,\pi+\delta\xi\}\right)$ and $c_\sharp$ as in \eqref{eq:c}. Substituting these expressions in \eqref{eq:rewneareq1}, \eqref{eq:rewneareq2} (and factoring out a common $\delta$) we get
\begin{equation}\label{eq:xinear1}
\begin{split}
    c_\sharp\,\xi\,\widehat\eta_+(\xi)&+\chi(\vert\xi\vert\leq \delta^{\tau-1})\langle W\eta^{\mathrm{near}},\Phi_+(\cdot,\pi+\delta\xi)\rangle_{L^2(\R)}-\mu_\sharp\widehat\eta_+(\xi) \\
&=-\chi(\vert\xi\vert\leq \delta^{\tau-1})\langle W\,\eta^{\mathrm{far}}(\eta^{\mathrm{near}},\delta),\Phi_+(\cdot,\pi+\delta\xi)\rangle_{L^2(\R)} \\
&\quad\,+\chi(\vert\xi\vert\leq \delta^{\tau-1})\widetilde R_+(\eta^{\mathrm{near}}+\eta^{\mathrm{far}}(\eta^{\mathrm{near}},\delta))(\pi+\delta\xi)-\frac{1}{2}\delta\,\mu''_+(\xi^\delta_+)\,\xi^2\,\widehat{\eta}_+(\xi)
\end{split}
\end{equation}
and
\begin{equation}\label{eq:xinear2}
    \begin{split}
     -c_\sharp\,\xi\,\widehat\eta_-(\xi)&+\chi(\vert\xi\vert\leq \delta^{\tau-1})\langle W\eta^{\mathrm{near}},\Phi_-(\cdot,\pi+\delta\xi)\rangle_{L^2(\R)} -\mu_\sharp\widehat\eta_-(\xi)\\
&=-\chi(\vert\xi\vert\leq \delta^{\tau-1})\langle W\,\eta^{\mathrm{far}}(\eta^{\mathrm{near}},\delta),\Phi_-(\cdot,\pi+\delta\xi)\rangle_{L^2(\R)} \\
&\quad\,+\chi(\vert\xi\vert\leq \delta^{\tau-1})\widetilde R_-(\eta^{\mathrm{near}}+\eta^{\mathrm{far}}(\eta^{\mathrm{near}},\delta))(\pi+\delta\xi) -\frac{1}{2}\delta\,\mu''_-(\xi^\delta_-)\,\xi^2\,\widehat{\eta}_-(\xi)\,.
    \end{split}
\end{equation}
Note that we are already using Corollary \ref{cor:solfar}, since in the two equations above $\eta^{\mathrm{far}}$ is always considered as the solution of the second line of \eqref{eq_near/far} found in Corollary \ref{cor:solfar} corresponding to the unknown function $\eta^{\mathrm{near}}$ we are looking for. We will explain why this is rigorous, for $\delta$ sufficiently small, in the proof of Proposition \ref{lem:F}.

To handle \eqref{eq:xinear1}, \eqref{eq:xinear2}, some simplifications are needed. First, adapting verbatim the computations to derive \cite[Eqs. (6.71)$\&$(6.73)]{FLW17}, we obtain
\begin{equation}\label{eq:massterm}
\chi(\vert\xi\vert\leq \delta^{\tau-1})\langle W\eta^{\mathrm{near}},\Phi_\pm(\cdot,\pi+\delta\xi)\rangle_{L^2(\R)}= \vartheta_\sharp \widehat\eta_\mp(\xi)+\sum^8_{j=1}I^j_\pm(\xi\,;\eta^{\mathrm{near}})\,,
\end{equation}
for suitable terms $I^j_\pm(\xi\,;\eta^{\mathrm{near}})$ that are linear in the components of $\eta^{\mathrm{near}}$. Moreover, we recall that
\begin{align*}
    \widetilde R _\pm(\eta^{\mathrm{near}}+\eta^{\mathrm{far}}(\eta^{\mathrm{near}},\delta))(\pi+\delta\xi) = &\, \widetilde F_\pm (\pi+\delta \xi) \\
    & \, + \langle L_\delta (\eta^{\mathrm{near}}+\eta^{\mathrm{far}}(\eta^{\mathrm{near}},\delta)), \Phi_\pm (\cdot, \pi + \delta \xi) \rangle_{L^2(\R)} \\
    & \, + \langle \mathcal N_\delta (\eta^{\mathrm{near}}+\eta^{\mathrm{far}}(\eta^{\mathrm{near}},\delta)), \Phi _\pm (\cdot, \pi + \delta \xi) \rangle_{L^2(\R)}
    \end{align*}
    with $L_\delta (\eta), \mathcal N_\delta (\eta)$ given by \eqref{eq:L_delta}, \eqref{eq:N_delta}, and, for the sake of simplicity, we split $L_\delta (\eta)$ as
    \begin{equation}\label{eq:L_split}
    L_\delta (\eta) = L^1 (\eta) + L^2_\delta (\eta)
    \end{equation}
    with 
    \begin{equation}\label{eq:L1}
    L^1(\eta)(x) := 3U_0^2(x,\delta x)  \eta(x)\,.
    \end{equation}
    Then, we obtain the following result. 
    
\begin{lemma}\label{lem:L^1_delta}
For $\delta>0$ sufficiently small
   \begin{equation}\label{eq:L1_delta_proj}
   \hspace{-.5cm} \begin{cases}
       \langle L^1 (\eta^{\mathrm{near}}), \Phi_+ (\cdot, \pi + \delta \xi) \rangle_{L^2(\R)} = 3\mathcal{F}\left(2\beta_1|\Psi_+|^2\eta_++\left(\beta_1\Psi_+^2+\beta_2\Psi_-^2\right)\eta_-\right) + I_\delta^+(\eta^{\mathrm{near}}) & \\[.1cm]
       \langle L^1 (\eta^{\mathrm{near}}), \Phi_- (\cdot, \pi + \delta \xi) \rangle_{L^2(\R)} = 3\mathcal{F}\left(\left(\beta_2\Psi_+^2+\beta_1\Psi_-^2\right)\eta_++2\beta_1|\Psi_+|^2\eta_-\right) + I_\delta^-(\eta^{\mathrm{near}}) & 
   \end{cases}
   \end{equation}
   where $\eta_\pm$ are defined by \eqref{eq:bl} and $I_\delta^\pm (\eta^{\mathrm{near}})$ are residual terms satisfying
   \begin{equation}\label{eq:I_delta_est}
   \lVert I_\delta^\pm (\eta^{\mathrm{near}}) \rVert_{L^2(\R)} \lesssim \delta^\tau\left( \lVert \widehat \eta_+ \rVert_{L^{2,1}(\R)}+\lVert \widehat \eta_-\rVert_{L^{2,1}(\R)}\right)\,.
   \end{equation}
\end{lemma}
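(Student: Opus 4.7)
The plan is to derive \eqref{eq:L1_delta_proj} via a two-scale asymptotic analysis in the spirit of \cite{FLW17}, exploiting the smooth Bloch expansion \eqref{eq-smoothphi} near $k=\pi$ together with the explicit form of $U_0$. First, using ansatz \eqref{eq:bl} and the decomposition \eqref{compact_not_near/far}, I would write
\[
\eta^{\mathrm{near}}(x) = \frac{\delta}{2\pi}\sum_{j\in\{+,-\}}\int_{|\xi|\le\delta^{\tau-1}}\widehat\eta_j(\xi)\,\Phi_j(x,\pi+\delta\xi)\,d\xi,
\]
and apply \eqref{eq-smoothphi} to decompose $\Phi_j(x,\pi+\delta\xi)=\Phi_j(x,\pi)e^{i\delta\xi x}+R_j(x,\delta\xi)$, where $R_j$ is smooth and of order $\delta\xi$ on the support of the cutoff. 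The same decomposition is applied to $\Phi_\pm(\cdot,\pi+\delta\xi)$ in the inner product.

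Next, I would substitute $U_0=\Psi_-(\delta x)\Phi_-(x,\pi)+\Psi_+(\delta x)\Phi_+(x,\pi)$ into $L^1(\eta^{\mathrm{near}})=3U_0^2\eta^{\mathrm{near}}$, expand the square, and rewrite the inner product as a sum over $a,b,j\in\{+,-\}$ of integrals of the form
\[
\frac{3\delta}{2\pi}\int_{|\xi'|\le\delta^{\tau-1}}\widehat\eta_j(\xi')\int_\R\Psi_a(\delta x)\Psi_b(\delta x)\,p_{a,b,j,\pm}(x)\,e^{i\delta(\xi'-\xi)x}\,dx\,d\xi',
\]
with $p_{a,b,j,\pm}:=\Phi_a(\cdot,\pi)\Phi_b(\cdot,\pi)\Phi_j(\cdot,\pi)\overline{\Phi_\pm(\cdot,\pi)}$, which is $1$-periodic thanks to the $\pi$-pseudoperiodicity of the Bloch waves at $k=\pi$. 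Fourier-expanding $p_{a,b,j,\pm}$ in $x$ and changing variables $y=\delta x$, only the zero Fourier mode $\int_0^1 p_{a,b,j,\pm}\,dx$ contributes to leading order in $\delta$; non-zero modes produce evaluations of Schwartz functions at frequencies $\xi-2\pi m/\delta$, and hence $O(\delta^N)$ errors for any $N$. By \eqref{sym_conj} and the vanishing identities \eqref{int_phi_j}, the period integral $\int_0^1 p_{a,b,j,\pm}\,dx$ equals $\beta_1$ exactly when the four Bloch wave factors rearrange to $|\Phi_+|^2|\Phi_-|^2$, equals $\beta_2$ when they collapse to $\Phi_-^4$ or $\Phi_+^4$, and vanishes otherwise. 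Collecting the six surviving combinations and noting that the convolution in $\xi'$ with the Fourier transform of the slow envelope $\Psi_a\Psi_b$ gives $\mathcal{F}(\Psi_a\Psi_b\,\eta_j)(\xi)$, with the prefactors $\delta/(2\pi)$ and $2\pi/\delta$ cancelling exactly, one recovers \eqref{eq:L1_delta_proj}.

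The residual $I_\delta^\pm$ then collects three families of error terms: (i) contributions involving at least one of the remainders $R_j$ or the analogous remainder in the projecting wave, which carry an explicit factor $\delta\xi=O(\delta^\tau)$ on the support of $\widehat\eta_\pm$; (ii) non-zero Fourier modes of $p_{a,b,j,\pm}$, which produce $O(\delta^N)$ errors by the Schwartz regularity of $\Psi$ from \eqref{eq:expdecay}; (iii) the tails from reintroducing the cutoff $|\xi|\le\delta^{\tau-1}$. Each residual is estimated via Parseval, with the weight $\xi$ in the $L^{2,1}$-norm of $\widehat\eta_\pm$ absorbing the factor produced by each application of \eqref{eq-smoothphi}, yielding \eqref{eq:I_delta_est}.

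The main obstacle is the careful exploitation of the symmetries from Proposition \ref{prop-Blochsym}: expanding $U_0^2\,\eta^{\mathrm{near}}$ produces many summands, each further split by the expansion of several Bloch waves via \eqref{eq-smoothphi}, and only the use of \eqref{sym_conj} and \eqref{int_phi_j} ensures that spurious combinations drop out and the surviving coefficients combine exactly into the stated formula.
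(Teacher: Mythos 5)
Your proposal is correct and follows essentially the same route as the paper: the localized Fourier ansatz for $\eta^{\mathrm{near}}$, the Taylor expansion of the Bloch waves near $k=\pi$ (the paper does this via $\Phi_\pm(x,\pi+\delta\xi)=e^{i\pi x}e^{i\delta\xi x}\bigl(p_\pm(x,\pi)+O(\delta\xi)\bigr)$, which is your $\Phi_j(x,\pi)e^{i\delta\xi x}+R_j$), the Poisson--summation reduction to the zero Fourier mode of the periodic factor (the paper outsources this to \cite[Lemmas 6.5, 6.11]{FLW17}), and the identification of the surviving period integrals with $\beta_1,\beta_2$ via \eqref{sym_conj} and \eqref{int_phi_j}. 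The only cosmetic difference is that the paper estimates the $O(\delta\xi)$ remainders and the nonzero-mode contributions by citing the relevant lemmas of \cite{FLW17} rather than rederiving them, but the mechanism you describe is the one used there.
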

\begin{proof}
Recall that, by Remark \ref{rem-pseudotoper} and Remark \ref{rem:regP}, we have
\[
\Phi_\pm(x,k)=e^{i k x}p_\pm(x,k)
\]
for suitable smooth functions $p_\pm(x,k)$ 1--periodic in the first variable. Smoothness allows to write
\begin{equation}
\label{eq:expandP}
\Phi_\pm(x,\pi+\delta\xi)=e^{i\pi x}e^{i\delta\xi x}\left(p_\pm(x,\pi)+\delta\xi\partial_k p_\pm(x,\widetilde k_\pm(x;\delta\xi))\right)
\end{equation}
for some $\widetilde k(x;\delta\xi)\in \left(\min\{\pi,\pi+\delta\xi\},\max\{\pi,\pi+\delta\xi\}\right)$ so that
\begin{equation}
\label{eq:lowerord}
\sup_{x\in[0,1]}|\delta\xi\partial_k p_\pm(x,\widetilde k_\pm(x;\delta\xi))|\leq\delta^\tau
\end{equation}
whenever $|\xi|\leq\delta^{\tau-1}$. By \eqref{compact_not_near/far}, \eqref{eq:tilde_eta_pm}, \eqref{eq:xi} and \eqref{eq:expandP} we then have
\begin{align}
\label{eq-dinuovonear}\eta^{\mathrm{near}}(x)&\,=\frac{1}{2\pi}\int_0^{2\pi}\widetilde\eta_+(k)\Phi_+(x,k)\,dk+\frac{1}{2\pi}\int_0^{2\pi}\widetilde\eta_-(k)\Phi_-(x,k)\,dk\\
&\,=\frac{\delta}{2\pi}\int_{\left\{|\xi|\leq\delta^{\tau-1}\right\}}\widehat{\eta}_+(\xi)e^{i\pi x}e^{i\delta\xi x}\left(p_+(x,\pi)+\delta\xi\partial_k p_+(x,\widetilde k_+(x;\delta\xi))\right)\,d\xi\nonumber\\
&\quad+\frac{\delta}{2\pi}\int_{\left\{|\xi|\leq\delta^{\tau-1}\right\}}\widehat{\eta}_-(\xi)e^{i\pi x}e^{i\delta\xi x}\left(p_-(x,\pi)+\delta\xi\partial_k p_-(x,\widetilde k_-(x;\delta\xi))\right)\,d\xi\nonumber\\
&\,=e^{i\pi x}\frac{\delta}{2\pi}\left(p_+(x,\pi)\eta_+(\delta x)+\rho_+(x,\delta x) +p_-(x,\pi)\eta_-(\delta x)+\rho_-(x,\delta x)\right)\nonumber
\end{align}
with
\[
\rho_\pm(x,\delta x):=\delta\int_{\left\{|\xi|\leq\delta^{\tau-1}\right\}}\widehat\eta_\pm(\xi)\xi e^{i\xi\delta x}\partial_k p_\pm(x;\widetilde k_\pm(x,\delta\xi))\,d\xi\,.
\]
We now use this expression for $\eta^{\mathrm{near}}$ to estimate $\langle3U_0(\cdot,\delta\cdot)^2\eta^{\mathrm{near}},\Phi_\pm(\cdot,\pi+\delta\xi)\rangle_{L^2(\R)}$. Consider first the case of $\Phi_+$, for which the previous formula yields
\begin{align}
\label{I}
  \langle  U_0^2(\cdot,\delta\cdot) \eta^{\mathrm{near}}, &\Phi_+ (\cdot, \pi + \delta \xi) \rangle_{L^2(\R)} = \\
\label{I.1}
& \frac{\delta}{2\pi} \langle p_+(\cdot, \pi)  U_0^2(\cdot, \delta \cdot) \eta_{+} (\delta \cdot), e^{i \xi \delta \cdot} p_+ (\cdot, \pi + \delta \xi) \rangle_{L^2(\R)}  \\
\label{I.2}
& + \frac{\delta}{2\pi} \langle p_-(\cdot, \pi)  U_0^2(\cdot, \delta \cdot) \eta_{-} (\delta \cdot), e^{i \xi \delta \cdot} p_+ (\cdot, \pi + \delta \xi) \rangle_{L^2(\R)}  \\
\label{I.3}
& +  \frac{\delta}{2\pi} \langle   U_0^2(\cdot, \delta \cdot) \rho_+(\cdot, \delta \cdot), e^{i \xi \delta \cdot} p_+ (\cdot, \pi + \delta \xi) \rangle_{L^2(\R)}  \\
\label{I.4}
 & +  \frac{\delta}{2\pi} \langle  U_0^2(\cdot, \delta \cdot) \rho_- (\cdot, \delta \cdot), e^{i \xi \delta \cdot} p_+ (\cdot, \pi + \delta \xi) \rangle_{L^2(\R)}\,.
 \end{align}
Since by \eqref{eq:U0} 
\[
    U^2_0(x, \delta x) = \Psi_+ (\delta x)^2 \Phi_+ (x,\pi) ^2 + \Psi_-(\delta x)^2 \Phi_-(x,\pi)^2 + 2 \Psi_+(\delta x) \Psi_- (\delta x) \Phi_+(x,\pi) \Phi_-(x,\pi)
\]
with $\Psi_\pm$ as in Theorem \ref{thm:main2}, note that setting
\begin{equation}\label{eq:W_k}
    W_{ij}(x) \coloneqq \Phi_i (x,\pi) \overline{\Phi_j}(x,\pi)\quad 
    \mbox{and} \quad \kappa_{ij} (\delta x) \coloneqq \Psi_i(\delta x) \overline{\Psi_j}(\delta x)
    \end{equation}
for $i,j \in \{ +,-\}$, one has (recall also Proposition \ref{prop-Blochsym})
\[
U^2_0(x, \delta x)=\sum_{i,j\in\left\{+,-\right\}}\kappa_{ij}(\delta x)W_{ij}(x)\,,
\]
and $W_{ij}$ are smooth 1--periodic functions and $\kappa_{ij} \in \mathcal{S}(\R)$.
Therefore, to estimate \eqref{I} we can repeat the analogous argument developed in \cite{FLW17} combining \cite[Lemma 6.5, Lemma 6.11, proof of Proposition 6.13]{FLW17}. In particular, it holds
    \begin{equation}\label{eq:51d_est}
    \lVert \chi (\lvert \xi \rvert \le \delta^{\tau -1})\eqref{I.3} \rVert_{L^2(\R)} \le C_1 \delta^\tau \lVert \widehat \eta_+ \rVert_{L^{2,1} (\R)},
    \end{equation}
    and
    \begin{equation}\label{eq:51e_est}
    \lVert \chi (\lvert \xi \rvert \le \delta^{\tau -1}) \eqref{I.4} \rVert_{L^2(\R)} \le C_2 \delta^\tau \lVert \widehat \eta_- \rVert_{L^{2,1} (\R)},
    \end{equation}
    for suitable constants $C_1, C_2 $ depending on the $L^\infty$-norm of $\Psi_\pm, \Phi_\pm$ only. 

    Let us now focus on \eqref{I.1}. By \eqref{eq:expandP} and \cite[Lemma 6.5]{FLW17} we obtain
     \begin{align*}
    \eqref{I.1} = & \sum_{i,j\in\{+,-\}}\left\{\sum_{m \in \Z} \mathcal F\left(\kappa_{ij} \eta_+\right) \Big (\frac{2 \pi m}{\delta} + \xi \Big ) \int_0 ^1 e^{2\pi i mx} \lvert p_+ (x, \pi) \rvert^2 W_{ij}(x) dx\right. \\
    & \left.+ \sum_{m \in \Z} \mathcal F\left(\kappa_{ij} \eta_+\right) \Big (\frac{2 \pi m}{\delta} + \xi \Big ) \int_0 ^1 e^{2\pi i mx} \Delta p_+ (x, \delta \xi)  W_{ij}(x) dx\right\} \\
    = & \sum_{i,j\in\{+,-\}}\mathcal F\left(\kappa_{ij} \eta_+\right) ( \xi ) \int_0 ^1 \lvert p_+ (x, \pi) \rvert^2 W_{ij}(x) dx + A_1^+,
    \end{align*}
    where we wrote $\Delta p_+ (x, \delta \xi):=\delta\xi\partial_k p_+(x,\widetilde k_+(x;\delta\xi))$. Now, by \eqref{eq:lowerord} and \cite[Lemma 6.11]{FLW17} one has
    \begin{equation}\label{eq:R^+_1_est}
    \lVert \chi (\lvert \xi \rvert \le \delta^{\tau -1}) A_1^+ \rVert_{L^2(\R)} \le \delta^\tau\lVert \widehat \eta_+ \rVert_{L^{2,1} (\R)}.
    \end{equation}
    Moreover, by Proposition \ref{prop-Blochsym}, \eqref{int_phi_j}, \eqref{eq:W_k}, and recalling the definition \eqref{eq:beta1}, \eqref{eq:beta2} of $\beta_1$ and $\beta_2$, we see that
    \[
    \begin{split}
    &\int_0^1|p_+(x,\pi)|^2W_{++}(x)\,dx=\beta_1\,,\qquad\int_0^1|p_+(x,\pi)|^2W_{--}(x)\,dx=\beta_1\,,\\
    &\int_0^1|p_+(x,\pi)|^2W_{+-}(x)\,dx=\int_0^1|p_+(x,\pi)|^2W_{-+}(x)\,dx=0\,,
    \end{split}
    \]
    so that, again by \eqref{eq:W_k}, 
    \begin{equation}
    \label{eq:p+1}
    \begin{split}
    \sum_{i,j\in\{+,-\}}\mathcal F\left(\kappa_{ij} \eta_+\right)( \xi ) &\int_0 ^1 \lvert p_+ (x, \pi) \rvert^2 W_{ij}(x)  dx\\
    &\,=\beta_1\mathcal F\left(|\Psi_+|^2\eta_+\right)(\xi)+\beta_1\mathcal F\left(|\Psi_-|^2\eta_+\right)(\xi)\,.
    \end{split}
    \end{equation}
    In the very same way, we also have
     \[
     \eqref{I.2} =\sum_{i,j\in\{+,-\}}\mathcal F\left(\kappa_{ij} \eta_- \right)(\xi) \int_0^1 \overline{p_+}(x,\pi) p_-(x,\pi) W_{ij}(x) dx + A_2^+,
     \]
     where the last term satisfies
     \begin{equation}\label{eq:R^+_2_est}
     \lVert \chi (\lvert \xi \rvert \le \delta^{\tau -1}) A_2^+ \rVert_{L^2(\R)} \le \delta ^\tau \lVert \widehat \eta_- \rVert_{L^{2,1} (\R)}
     \end{equation}
    and 
    \[
    \begin{split}
    &\int_0^1 \overline{p_+}(x,\pi) p_-(x,\pi) W_{++}(x)\,dx=\int_0^1\overline{p_+}(x,\pi) p_-(x,\pi)W_{--}(x)\,dx=0\,,\\
    &\int_0^1\overline{p_+}(x,\pi) p_-(x,\pi)W_{+-}(x)\,dx=\beta_1\,,\qquad\int_0^1\overline{p_+}(x,\pi) p_-(x,\pi)W_{-+}(x)\,dx=\beta_2\,,
    \end{split}
    \]
    so that
    \begin{equation}
        \label{eq:p+2}
        \begin{split}
            \sum_{i,j\in\{+,-\}}\mathcal F\left(\kappa_{ij} \eta_- \right)(\xi) &\int_0^1 \overline{p_+}(x,\pi) p_-(x,\pi) W_{ij}(x) dx \\
           &\, = \beta_1\mathcal F\left(\Psi_+\overline{\Psi_-}\eta_-\right)(\xi)+\beta_2\mathcal{F}\left(\Psi_-\overline{\Psi_+}\eta_-\right)(\xi).
        \end{split}
    \end{equation}
Recalling the relation between $\Psi_\pm$ given by Theorem \ref{thm:main2} and combining \eqref{eq:51d_est}, \eqref{eq:51e_est}, \eqref{eq:R^+_1_est}, \eqref{eq:p+1}, \eqref{eq:R^+_2_est} and \eqref{eq:p+2}, we end up with 
\[
\begin{split}
    \langle  U_0^2(\cdot,\delta\cdot) \eta^{\mathrm{near}}, &\,\Phi_+ (\cdot, \pi + \delta \xi) \rangle_{L^2(\R)} \\
    &\,=\mathcal{F}\left(2\beta_1|\Psi_+|^2\eta_++\left(\beta_1\Psi_+^2+\beta_2\Psi_-^2\right)\eta_-\right)+I_\delta^+(\eta^{\mathrm{near}})
\end{split}
\]
with $\|I_+^\delta(\eta^{\mathrm{near}})\|_{L^2(\R)}\lesssim\delta^\tau\|\widehat\eta^{\mathrm{near}}\|_{L^{2,1}(\R)}$. This proves the first line of \eqref{eq:L1_delta_proj}. Since the second one follows analogously, we conclude.
\end{proof}

In view of \eqref{eq:massterm} and Lemma \ref{lem:L^1_delta}, if we set
\begin{equation}\label{eq:zeta}
\widehat\zeta(\xi):=\begin{pmatrix}
    \widehat\eta_-(\xi) \\
    \widehat\eta_+(\xi)
\end{pmatrix}
\end{equation}
the system of equations \eqref{eq:xinear1}, \eqref{eq:xinear2} can be written in the form
\begin{equation}\label{eq:diracnear}
       ( \widehat\D^\delta(\xi) +\LL^\delta(\xi))\widehat\zeta(\xi)= \RR^\delta(\widehat\zeta)(\xi)\,,
\end{equation}
where
\begin{equation}\label{eq:Ddelta}
        \widehat\D^\delta(\xi)\widehat\zeta(\xi):=ic_\sharp\,\sigma_3\,\widehat{\zeta'}(\xi) +\vartheta_\sharp\, \sigma_1 \widehat \zeta(\xi)\, - \mu_\sharp\widehat\zeta(\xi)- \chi(\lvert \xi \rvert \le \delta^{\tau -1})  L(\widehat \zeta)(\xi)
\end{equation}
with
\begin{equation}
\label{eq:defLzeta}
\mathcal F^{-1}\left(L(\widehat\zeta)\right)(x):=\begin{pmatrix}
  6\beta_1|\Psi_+(x)|^2 & 3\left(\beta_2\Psi_+^2(x)+\beta_1\Psi_-^2(x)\right)\\
  3\left(\beta_1\Psi_+^2(x)+\beta_2\Psi_-^2(x)\right) & 6\beta_1|\Psi_+(x)|^2\,,
\end{pmatrix}\zeta(x)\,,
\end{equation}
and
\begin{equation}\label{eq:Ldelta}
    \begin{split}
        \LL^\delta(\xi)\widehat\zeta(\xi):= &\, \frac{1}{2}\delta \chi(\vert\xi\vert\leq \delta^{\tau-1})
        \begin{pmatrix}
            \mu''_-(\xi^\delta_-) \\
            \mu''_+(\xi^\delta_+)
        \end{pmatrix}
        \xi^2\widehat\zeta(\xi)\\
        &+ \chi(\vert\xi\vert\leq \delta^{\tau-1})\sum^8_{j=1}
        \begin{pmatrix}
            I^j_-(\xi\,;\eta^{\mathrm{near}})\\
            I^j_+(\xi\,;\eta^{\mathrm{near}})
        \end{pmatrix}
         - \begin{pmatrix}
            I^-_\delta(\xi\,;\eta^{\mathrm{near}})\\
            I^+_\delta(\xi\,;\eta^{\mathrm{near}})
        \end{pmatrix}\\
        &- \chi(\vert\xi\vert\leq \delta^{\tau-1})\begin{pmatrix}
        \langle L^2_\delta(\zeta,\Phi_-(,\pi+\delta\xi)\rangle_{L^2(\R)}\\
        \langle L^2_\delta(\zeta,\Phi_+(,\pi+\delta\xi)\rangle_{L^2(\R)}
        \end{pmatrix} 
        \end{split}
    \end{equation}
    with $I_\delta^\pm(\widehat\eta), L^2_\delta(\zeta)$ as in \eqref{eq:L1_delta_proj} and \eqref{eq:L_split}, respectively, and
    \begin{equation}\label{eq:Rdelta}
    \begin{split}
        \RR^\delta(\widehat\zeta)(\xi):=&\,-\chi(\vert\xi\vert\leq \delta^{\tau-1})
        \begin{pmatrix}
        \langle W\eta^{\mathrm{far}}(\eta^{\mathrm{near}},\delta),\Phi_+(\cdot,\pi+\delta\xi)\rangle_{L^2(\R)}\\
        \langle W\eta^{\mathrm{far}}(\eta^{\mathrm{near}},\delta),\Phi_-(\cdot,\pi+\delta\xi)\rangle_{L^2(\R)}
        \end{pmatrix} 
        \\[.2cm]
        &\,+\chi(\vert\xi\vert\leq \delta^{\tau-1})
        \begin{pmatrix}
        \langle L_\delta(\eta^{\mathrm{far}}(\eta^{\mathrm{near}},\delta)),\Phi_+(\cdot,\pi+\delta\xi)\rangle_{L^2(\R)}\\
        \langle L_\delta(\eta^{\mathrm{far}}(\eta^{\mathrm{near}},\delta)),\Phi_-(\cdot,\pi+\delta\xi)\rangle_{L^2(\R)}
        \end{pmatrix} 
        \\[.2cm]
        &\,+\chi(\vert\xi\vert\leq \delta^{\tau-1})
        \begin{pmatrix}
            \widetilde F_+ (\delta \xi + \pi)\\
            \widetilde F_- (\delta \xi + \pi)
        \end{pmatrix}
        \\[.2cm]
        &\,+\chi(\vert\xi\vert\leq \delta^{\tau-1})
        \begin{pmatrix}
            \langle \mathcal N_\delta (\eta), \Phi _+ (\cdot, \pi + \delta \xi) \rangle_{L^2(\R)} \\
            \langle \mathcal N_\delta (\eta), \Phi _- (\cdot, \pi + \delta \xi) \rangle_{L^2(\R)}
        \end{pmatrix}.
        \end{split}
    \end{equation}
Equivalently, \eqref{eq:diracnear} can be further rewritten as
\begin{equation}
\label{eq:diracnear_2}
\left( \widehat\D_0(\xi)+(\widehat\D^\delta(\xi)-\widehat\D_0(\xi) )+\LL^\delta(\xi)\right)\widehat\zeta(\xi)= \RR^\delta(\widehat\zeta)(\xi)\,,
\end{equation}
where 
\begin{equation}\label{eq:D}
    \widehat\D_0(\xi)\widehat\zeta(\xi)= ic_\sharp\,\sigma_3\,\widehat{\zeta'}(\xi) +\vartheta_\sharp\, \sigma_1\widehat \zeta(\xi)-\mu_\sharp\widehat\zeta(\xi) \, -   L(\widehat \zeta)(\xi) 
\end{equation} 
and
\[
(\widehat\D^\delta(\xi)-\widehat\D_0(\xi))\widehat\zeta(\xi)= \chi(\lvert \xi \rvert > \delta^{\tau -1})  L(\widehat \zeta)(\xi)\,.
\]

\begin{remark}
\label{rem:local}
Recall that, by \eqref{eq:bl}, we look for a band--limited $\zeta$ solving \eqref{eq:diracnear}, that is a solution in the form
\[
\widehat\zeta(\xi)=\chi(\vert \xi\vert \leq\delta^{\tau-1})\widehat\zeta(\xi)\,.
\]
On the other hand, we rewrote \eqref{eq:diracnear} as in \eqref{eq:diracnear_2} in terms of the operator $\widehat\D_0(\xi)$, which does not preserve band--limited functions. However, since \eqref{eq:diracnear} and \eqref{eq:diracnear_2} are equivalent, $\widehat\zeta\in L^{2,1}(\R)$ is a solution to the latter equation if and only if it is a solution to the former. Hence, we can apply the projector $\chi(\vert\xi\vert> \delta^{\tau-1})$ to the former equation, obtaining that $\widehat\zeta$ also satisfies
\[
\chi(\vert\xi\vert> \delta^{\tau-1})\left(-c_\sharp\sigma_3\xi\widehat\zeta(\xi)+\theta_\sharp\sigma_1\widehat\zeta(\xi)-\mu_\sharp\widehat\zeta(\xi)\right)=0\,,
\]
so that $\widehat\zeta$ is indeed supported on $\{\vert\xi\vert\leq\delta^{\tau-1}\}$ (since $\mu_\sharp$ is in the spectral gap of the operator $\D$ defined by \eqref{eq-diracop}).
\end{remark}
The strategy to perform our desired fixed point argument is now the following:
\begin{enumerate}[label=\alph*)]
    \item we show that $\widehat{\mathcal D}_0$ is invertible;

    \item this allows to rewrite \eqref{eq:diracnear_2} in the form
\[
(\mathbb{I} + C^\delta (\xi)) \hat \zeta (\xi) = \left(\widehat\D_0(\xi)^{-1} \RR^\delta(\widehat\zeta)\right)(\xi)
\]
with $C^\delta(\xi):=\widehat{\mathcal D}_0(\xi)^{-1}((\widehat\D^\delta(\xi)-\widehat\D_0(\xi) )+\LL^\delta(\xi))$, and prove that $\mathbb{I}+C^\delta$ is invertible too for $\delta$ small enough;

    \item we conclude by a contraction argument for the map $(\mathbb{I}+C^\delta)^{-1}\widehat\D^{-1}\RR^\delta(\cdot)$.
\end{enumerate}
These three steps are accomplished in the next subsections.
   

\subsection{Inversion of \texorpdfstring{$\widehat\D_0$}{D} through a symmetry reduction}
\label{sec:near}

To prove that $\widehat\D_0$ can be inverted it is crucial to exploit the symmetries \eqref{eq:sympm} we require on $\widetilde\eta_\pm$, that in terms of the variable $\widehat\zeta$ defined by \eqref{eq:zeta} read
\begin{equation}\label{eq:xisym}
\begin{cases}
\overline{\widehat{\zeta}(\xi)}=\sigma_1 \widehat{\zeta}(-\xi)\\
\widehat{\zeta}(\xi)=\sigma_1 \widehat{\zeta}(-\xi)\,.
\end{cases}
\end{equation}
Observe that, in physical space, the first line of \eqref{eq:xisym} gives
\[
\overline{\zeta(x)}=\int^{+\infty}_{-\infty}\overline{\widehat{\zeta}(\xi)}e^{i(-\xi) x}\,d\xi =\int^{+\infty}_{-\infty}\sigma_1 \widehat{\zeta}(-\xi)e^{i(-\xi)x}\,d\xi\stackrel{\alpha=-\xi}{=}\int^{+\infty}_{-\infty}\sigma_1\widehat{\zeta}(\alpha)e^{i\alpha x}\,d\alpha\,,
\]
that is
\begin{equation}\label{eq:xsym1}
 \overline{\zeta(x)}=\sigma_1\zeta(x)\,,\qquad x\in\R\,.
\end{equation}
Analogously, the second line of \eqref{eq:xisym} corresponds to
\begin{equation}
\begin{split}
\zeta(x)&=\int^{+\infty}_{-\infty}\widehat\zeta(\xi) e^{i\xi x}\,d\xi=\sigma_1\int^{+\infty}_{-\infty}\widehat\zeta(-\xi)e^{i\xi x}\,d\xi \\
&=\sigma_1\int^{+\infty}_{-\infty}\widehat\zeta(-\xi)e^{i(-\xi) (-x)}\,d\xi\stackrel{\alpha=-\xi}{=}\sigma_1\int^{+\infty}_{-\infty}\widehat\zeta(\alpha)e^{i\alpha (-x)}\,d\alpha \,,
\end{split}
\end{equation}
that is
\begin{equation}\label{eq:xsym2}
\zeta(x)=\sigma_1\zeta(-x)\,.
\end{equation}
Hence, to prove that $\widehat\D_0$ is invertible when restricted to the subset of functions in $L^{2,1}(\R)$ satisfying \eqref{eq:xisym} is equivalent to show that $\D_0$, which can be immediately deduced by \eqref{eq:D}, is invertible when restricted to the subset of functions in $H^1(\R)$ satisfying \eqref{eq:xsym1}, \eqref{eq:xsym2}. The invertibility of the latter is guaranteed by the next proposition.
\begin{proposition}
\label{prop:kernelD}
Let $Y=\{\zeta\in H^1(\R)\,:\,\overline\zeta=\sigma_1\zeta\,,\,\zeta(\cdot)=\sigma_1\zeta(-\cdot)\}$. Then, $\ker_{\vert_Y}\D_0=\left\{0\right\}$.
\end{proposition}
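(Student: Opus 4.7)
The plan is to prove that $\ker_{H^1}\mathcal{D}_0$ is one--dimensional and spanned by $\Psi':=(\Psi_-',\Psi_+')^T$, and then observe that the two symmetries defining $Y$ are jointly incompatible with any non--zero multiple of $\Psi'$. That $\mathcal{D}_0\Psi'=0$ is essentially already recorded in Remark \ref{rem:eqPsi'}: using $\Psi_+=\overline{\Psi_-}$, the matrix appearing there is precisely the multiplication matrix defining the operator $L$ in \eqref{eq:defLzeta}. Moreover, $\Psi'\in H^1(\R;\C^2)\setminus\{0\}$ by the exponential decay estimates \eqref{eq:expdecay} applied to $\Psi$ and $\Psi''$, together with the non--triviality of $\Psi$.

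To establish the upper bound $\dim_\C\ker_{H^1}\mathcal{D}_0\le 1$, I would rewrite $\mathcal{D}_0\zeta=0$ as the first--order $\C$--linear ODE system $\zeta'=A(x)\zeta$ with
\[
A(x) := -\tfrac{i}{c_\sharp}\,\sigma_3\big(\mu_\sharp\,\mathbb{I} + M(x) - \vartheta_\sharp\sigma_1\big),
\]
where $M(x)$ denotes the multiplication matrix in \eqref{eq:defLzeta}. The key structural property is that $A(x)$ is traceless: each of $\mu_\sharp\mathbb{I}$, $M(x)$ and $\vartheta_\sharp\sigma_1$ has equal diagonal entries, and left--multiplication by $\sigma_3$ turns such matrices into $2\times 2$ matrices of zero trace. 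Liouville's formula then forces the Wronskian $W(\phi_1,\phi_2):=\det[\phi_1\,|\,\phi_2]$ of any two solutions to be constant on $\R$. If two linearly independent $H^1$ solutions existed, their decay to zero at $\pm\infty$ would force $W\equiv 0$, and by ODE uniqueness they would then be proportional, a contradiction. Hence $\ker_{H^1}\mathcal{D}_0=\C\,\Psi'$.

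The conclusion follows from a parity obstruction. Differentiating \eqref{eq:psisimm} in the case $\vartheta_\sharp>0$ (currently under consideration) yields $\overline{\Psi'}=\sigma_1\Psi'$ and $\Psi'(-x)=-\sigma_1\Psi'(x)$. For $c\in\C$, the element $c\,\Psi'$ lies in $Y$ only if $\bar c\,\sigma_1\Psi'=c\,\sigma_1\Psi'$ (from $\overline{\zeta}=\sigma_1\zeta$) and simultaneously $-c\,\sigma_1\Psi'=c\,\sigma_1\Psi'$ (from $\zeta(\cdot)=\sigma_1\zeta(-\cdot)$); the first forces $c\in\R$, the second forces $c=0$. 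Therefore $Y\cap\ker_{H^1}\mathcal{D}_0=\{0\}$. The main delicate point in the whole argument is that the specific form of $L$ in \eqref{eq:defLzeta} has been tailored so that $\mathcal{D}_0\Psi'=0$ and, at the same time, $A(x)$ is trace--free -- without this last feature the Wronskian argument would fail, and the one--dimensionality of the kernel could not be read off from the underlying ODE alone.
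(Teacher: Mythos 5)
Your proposal is correct and follows essentially the same route as the paper: show $\mathcal D_0\Psi'=0$ via Remark \ref{rem:eqPsi'}, use a Wronskian argument (the paper computes $\tfrac{d}{dx}W\equiv 0$ directly, which is exactly your Liouville/tracelessness observation) combined with decay at infinity to conclude the $H^1$ kernel is spanned by $\Psi'$, and then exclude $\Psi'$ from $Y$ by the parity relation $\Psi'(-x)=-\sigma_1\Psi'(x)$. The only cosmetic difference is that the paper first restricts to the conjugation-symmetric solutions (noting $W\in i\R$) before running the Wronskian argument, whereas you work on the full complex kernel directly; both are valid.
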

\begin{proof}
    Observe first that the set of solutions in $H^1(\R)$ of
    \begin{equation}\label{eq:Dzero}
\begin{cases}
    \D_0\zeta=0 & \\
    \overline\zeta=\sigma_1\zeta &
\end{cases}
\end{equation}
is given by $\Span\{\Psi'\}$, where $\Psi$ is the solution of \eqref{NLD} given by Theorem \ref{thm:main2} used in the definition \eqref{eq:U0} of the function $U_0$. In order to see this, note first that, by Theorem \ref{thm:main2}, $\overline{\Psi'}=\sigma_1\Psi'$; while the fact that $\D_0\Psi'=0$ is exactly the content of Remark \ref{rem:eqPsi'}. However, since the first line of \eqref{eq:Dzero} defines a $2\times 2$ ODE system with complex coefficients, a priori there may exist another solution in $H^1(\R)$, say $\varphi:=(\varphi_-,\varphi_+)^T$, linearly independent with respect to $\Psi'$. Defining the Wronskian 
\[
W(\Psi',\varphi):=\Psi_-'\varphi_+-\Psi_+'\varphi_-\,,
\]
we see that the second line of \eqref{eq:Dzero} entails $W(\Psi'(x),\varphi(x))\in i\R$ for every $x\in\R$. Moreover, using that both $\Psi'$ and $\varphi$ satisfies \eqref{eq:Dzero} we obtain
\[
    \frac{d}{dx}W(\Psi'(x),\varphi(x))=\Psi_-''(x)\varphi_+(x)+\Psi_-'(x)\varphi_+'(x)-\Psi_+''(x)\varphi_-(x)-\Psi_+'(x)\varphi_-'(x)=0\qquad\forall x\in\R\,,
\]
so that 
\begin{equation}\label{eq:W}
W(\Psi'(x),\varphi(x))\equiv ic\,,\quad \mbox{for some $c\in\R$.}
\end{equation}
On the one hand, there holds $c\neq0$, since the Wronskian cannot vanish as the two solutions $\Psi',\,\varphi$ must be linearly independent. On the other hand, since $\Psi', \varphi\in H^1(\R)$ implies that both functions tend to zero as $|x|\to+\infty$, we end up with
\[
0=\lim_{x\to\pm\infty}W(\Psi'(x),\varphi(x))=ic\neq0\,.
\]
Hence, there is no $H^1$--solution of \eqref{eq:Dzero} that is linearly independent with respect to $\Psi'$.

To conclude the proof it is then enough to note that $\ker_{\vert_Y}\D_0\subseteq\Span\{\Psi'\}$, but $\Psi'\not\in\ker_{\vert_Y}\D_0$, since $\Psi'(x)=-\sigma_1\Psi'(-x)$ by \eqref{eq:psisimm}.
\end{proof}


\subsection{Inversion of \texorpdfstring{$\mathbb{I}+C^\delta$}{I+C}}

In view of Proposition \ref{prop:kernelD}, we rewrite \eqref{eq:diracnear_2} as 
\begin{equation}
\label{eq:D_perp_2}
(\mathbb{I} + C^\delta (\xi)) \hat \zeta (\xi) = \left(\widehat\D_0(\xi)^{-1} \RR^\delta(\widehat\zeta)\right)(\xi)\,
\end{equation}
where
\begin{equation}
\label{eq:C_delta}
C^\delta(\xi)\widehat\zeta(\xi):=\widehat\D_0(\xi)^{-1}  \left( (\widehat\D^\delta(\xi)-\widehat\D_0(\xi)) +\LL^\delta(\xi)\right)\widehat\zeta(\xi)\,.
\end{equation}
We now prove that, for $\delta$ small enough, the operator $\mathbb{I} + C^\delta$ is invertible 

\begin{proposition}\label{prop:Cbound}
    There exists $\delta^\dagger>0$ such that, for every $\delta\in(0,\delta^\dagger)$, there holds
    \begin{equation}\label{eq:Cbound}
    \Vert C^\delta\Vert_{L^{2,1}(\R)\to L^{2,1}(\R)}\lesssim\delta^\tau\,. 
    \end{equation}
    Thus the operator $\mathbb{I}+C^\delta$ admits a bounded inverse on $L^{2,1}(\R)$.
\end{proposition}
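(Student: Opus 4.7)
The plan is to establish $\|C^\delta\|_{L^{2,1}(\R)\to L^{2,1}(\R)} \lesssim \delta^\tau$; the invertibility of $\mathbb{I}+C^\delta$ then follows from the Neumann series on $L^{2,1}(\R)$ for $\delta$ small enough. Writing $C^\delta = \widehat{\mathcal{D}}_0^{-1}\circ A^\delta$ with $A^\delta := (\widehat{\mathcal{D}}^\delta - \widehat{\mathcal{D}}_0) + \mathcal{L}^\delta$, the natural route is to prove (i) that $\widehat{\mathcal{D}}_0^{-1}$ acts boundedly from $L^2(\R)$ to $L^{2,1}(\R)$ on the symmetric subspace corresponding to the space $Y$ of Proposition \ref{prop:kernelD}, and (ii) that $\|A^\delta\widehat\zeta\|_{L^2(\R)} \lesssim \delta^\tau\|\widehat\zeta\|_{L^{2,1}(\R)}$. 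Composition of these two estimates then yields the claim.

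For (i), the symbol $-c_\sharp\xi\sigma_3 + \vartheta_\sharp\sigma_1 -\mu_\sharp\mathbb{I}$ of the free part of $\widehat{\mathcal{D}}_0$ has determinant $-(c_\sharp^2\xi^2 + \vartheta_\sharp^2 - \mu_\sharp^2)$, which is bounded away from zero because $|\mu_\sharp|<|\vartheta_\sharp|$. Hence its Fourier inverse gains one weight in $\xi$, mapping $L^2 \to L^{2,1}$. The perturbation $L$ appearing in $\widehat{\mathcal{D}}_0$ is the Fourier conjugate of the physical-space multiplication by a matrix whose entries are polynomials in $\Psi_\pm, \overline{\Psi_\pm}$, hence rapidly decaying by \eqref{eq:expdecay}; it therefore gives a compact perturbation of the free part. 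Proposition \ref{prop:kernelD} upgrades injectivity on $Y$ to bijectivity via the Fredholm alternative, and the bounded inverse $\widehat{\mathcal{D}}_0^{-1}:L^2\to L^{2,1}$ on the symmetric subspace follows.

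For (ii), each summand of $A^\delta$ is handled separately. The high-frequency tail $\chi(|\xi|>\delta^{\tau-1}) L(\widehat\zeta)$ is the Fourier transform of a Schwartz function multiplying $\zeta$, hence inherits super-polynomial decay in $\xi$; the cutoff at $\delta^{\tau-1}\to\infty$ therefore gives a contribution smaller than any power of $\delta$. The dispersive correction $\delta\mu''_\pm(\xi_\pm^\delta)\xi^2\widehat\zeta$ satisfies $\delta|\xi|^2 \le \delta^\tau|\xi|$ on the support $|\xi|\le\delta^{\tau-1}$, yielding an $L^2$-bound $\lesssim \delta^\tau\|\widehat\zeta\|_{L^{2,1}}$. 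The residuals $I^j_\pm$ arising from \eqref{eq:massterm} are the exact analogues of the remainders controlled in \cite{FLW17}; their estimates carry over verbatim to our setting, since the phase rotations of Proposition \ref{prop-Blochsym} leave all the relevant $x$-integrals unchanged (cf.\ Remark \ref{rem-nostrescelte}). The residuals $I^\pm_\delta$ are already controlled by \eqref{eq:I_delta_est} in Lemma \ref{lem:L^1_delta}. Finally, the $L^2_\delta$ contribution carries an explicit $\delta$ factor through \eqref{eq:L_split}, and the uniform boundedness of $U_0, U_1, \Phi_\pm$ yields an $O(\delta)$ estimate. Summing these contributions gives the desired $\delta^\tau$ bound, and Neumann invertibility of $\mathbb{I}+C^\delta$ follows at once.

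The main technical point I foresee is ensuring that $A^\delta\widehat\zeta$ actually lies in the image of the symmetric subspace on which $\widehat{\mathcal{D}}_0^{-1}$ has been inverted, i.e.\ that every component of $A^\delta$ respects the symmetries \eqref{eq:xisym}. This should reduce to the reality/parity symmetries of $\Psi$ from Theorem \ref{thm:main2} and of the Bloch waves from Proposition \ref{prop-Blochsym}, combined with the invariance of \eqref{eq_near/far} under \eqref{eq:nfspaces} already observed in the text; the secondary minor issue is the careful bookkeeping of the loss of one $\xi$-power that is compensated precisely by the smoothing $\widehat{\mathcal{D}}_0^{-1}:L^2\to L^{2,1}$.
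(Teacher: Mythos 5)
Your overall architecture coincides with the paper's: you split $C^\delta=\widehat{\mathcal D}_0^{-1}\circ\big((\widehat{\mathcal D}^\delta-\widehat{\mathcal D}_0)+\mathcal L^\delta\big)$, bound the second factor from $L^{2,1}(\R)$ to $L^2(\R)$ by $\delta^\tau$, and compose with the boundedness of $\widehat{\mathcal D}_0^{-1}:L^2(\R)\to L^{2,1}(\R)$. The treatment of $\mathcal L^\delta$ (dispersive correction via $\delta\xi^2\le\delta^\tau|\xi|$ on the support, the $I^j_\pm$ via \cite{FLW17}, the $I^\pm_\delta$ via \eqref{eq:I_delta_est}, the explicit $\delta$ in $L^2_\delta$) matches the paper. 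For the invertibility of $\widehat{\mathcal D}_0$ you argue via symbol analysis of the free part plus relative compactness of $L$ and the Fredholm alternative, whereas the paper shows that the graph norm of $\mathcal D_0$ is equivalent to the $H^1(\R)$ norm by a direct Cauchy--Schwarz computation; both are legitimate, and your route in fact makes the passage from the trivial kernel of Proposition \ref{prop:kernelD} to surjectivity more explicit than the paper does.

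There is, however, one incorrect justification. You claim that $\chi(|\xi|>\delta^{\tau-1})L(\widehat\zeta)$ is negligible to all orders because $L(\widehat\zeta)$ ``inherits super-polynomial decay in $\xi$'' from the Schwartz matrix multiplying $\zeta$. This confuses the roles of decay and regularity under the Fourier transform: $\widehat{M\zeta}=\widehat M*\widehat\zeta$, and since $\zeta$ is only in $H^1(\R)$ (equivalently $\widehat\zeta\in L^{2,1}(\R)$), the product $M\zeta$ has only $H^1$ regularity, so its Fourier transform gains exactly one power of $\langle\xi\rangle^{-1}$ in the weighted-$L^2$ sense — no more. The correct estimate, which is the one the paper uses, is
\begin{equation}
\big\|\chi(|\xi|>\delta^{\tau-1})L(\widehat\zeta)\big\|_{L^2(\R)}^2\lesssim\delta^{2(1-\tau)}\int_\R\big|\xi\,L(\widehat\zeta)\big|^2\,d\xi=\delta^{2(1-\tau)}\big\|\partial_x\big(\mathcal F^{-1}L(\widehat\zeta)\big)\big\|_{L^2(\R)}^2\lesssim\delta^{2(1-\tau)}\|\widehat\zeta\|_{L^{2,1}(\R)}^2,
\end{equation}
which gives $\delta^{1-\tau}$ rather than ``smaller than any power of $\delta$''. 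Since $\tau\in(0,\tfrac12)$ implies $1-\tau>\tau$, this weaker bound still yields \eqref{eq:Cbound}, so the conclusion survives, but the mechanism you invoked would fail and should be replaced by the one-derivative argument above.
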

\begin{proof}
For $\delta$ small enough, we start with the following estimates for the first two terms in $C^\delta$
    \begin{equation}\label{eq:C1est}
        \Vert(\widehat\D^\delta-\widehat\D_0)\widehat\zeta\Vert_{L^2(\R)}\lesssim\delta^{1-\tau}\Vert \widehat\zeta\Vert_{L^{2,1}(\R)}\,,
    \end{equation}
    \begin{equation}\label{eq:C2est}
        \Vert \LL^\delta \widehat\zeta\Vert_{L^2(\R)}\lesssim\delta^\tau \Vert \widehat\zeta\Vert_{L^{2,1(\R)}}\,.
    \end{equation}
    The latter inequality follows arguing verbatim as in the proof of \cite[Proposition 6.13]{FLW17} (also in view of \eqref{eq:I_delta_est}). As for the estimate in \eqref{eq:C1est}, it is enough to observe that
    \[
    \begin{split}
     \Vert(\widehat\D^\delta-\widehat\D)\widehat\zeta\Vert^2_{L^2(\R)}
     &=\Vert\chi(\vert\xi\vert>\delta^{\tau-1}) L(\widehat\zeta)\Vert^2_{L^2(\R)} \lesssim \int_{\{\vert\xi\vert>\delta^{\tau-1}\}}\vert \xi\vert^{-2}\vert\xi\vert^2\vert L(\widehat\zeta)\vert^2\,d\xi   \\
     &\leq\delta^{2(1-\tau)}\int_{\R}\lvert \xi L(\widehat\zeta) \rvert ^2 \,d\xi= \delta^{2(1-\tau)}\int_{\R}\left| \partial_x \mathcal{F}^{-1}\left(L(\widehat\zeta)\right)(x)\right| ^2 \,dx\\
     &\lesssim \delta^{2(1-\tau)} \lVert \zeta \rVert_{H^1(\R)} \simeq \delta^{2(1-\tau)}\Vert \widehat\zeta\Vert^2_{L^{2,1}(\R)}\,,
       \end{split}
    \]
    where in the last line we use the definition \eqref{eq:defLzeta} of $L(\widehat\zeta)$ and the fact that all the terms in the matrix of \eqref{eq:defLzeta} belong to $\mathcal{S}(\R)$.

    By \eqref{eq:C1est} and \eqref{eq:C2est}, the desired estimate \eqref{eq:Cbound} will follow if we show that $\widehat{\D}_0^{-1}$ is bounded from $L^{2}(\R)$ to $L^{2,1}(\R)$. To this end, we study the operator $\D_0^{-1}$ in physical space and exploit the equivalence $H^1(\R)\simeq L^{2,1}(\R)$. We start observing that the resolvent $\D_0^{-1}$ is not only bounded as an operator from $L^2(\R)$ to $L^2(\R)$, but can be seen as a bounded operator from $L^2(\R)$ to the operator domain $\operatorname{dom}(\D_0)\subseteq H^1(\R)$ endowed with the \emph{graph norm}
    \[
    \Vert \psi\Vert^2_{\D_0}:=\Vert \D_0\psi\Vert^2_{L^2(\R)}+\Vert \psi\Vert^2_{L^2(\R)}\,,\qquad \psi\in H^1(\R)\,.
    \]
    To see this, it suffices to observe that
\[
\begin{split}
\Vert(\D_0^{-1})\psi\Vert^2_{\D_0}&=\Vert\D_0(\D_0^{-1} )\psi\Vert^2_{L^2(\R)}+\Vert(\D_0^{-1})\psi\Vert^2_{L^2(\R)} =\Vert  \psi\Vert^2_{L^2(\R)}+\Vert(\D_0^{-1})\psi\Vert^2_{L^2(\R)}\leq C\Vert \psi\Vert^2_{L^2(\R)}
    \end{split}
    \]
for some constant $C>0$, thanks to the $L^2$--boundedness of the resolvent.

Let us now prove that the graph norm of $\D_0$ is equivalent to the standard $H^1$ norm, arguing as follows. Take $\psi\in H^1(\R)$ and consider the quantity
\begin{equation}\label{eq:modDnorm}
    \Vert \D_0\psi\Vert^2_{L^2(\R)}+ M \Vert \psi\Vert^2_{L^2(\R)}\,,
\end{equation}
where $M>0$ is a constant to be chosen later. Using \eqref{eq:Ddelta} to compute the first term in \eqref{eq:modDnorm} we have
\begin{equation}\label{eq:D1s}
\begin{split}
    \Vert\D_0\psi\Vert^2_{L^2(\R)}&=\,c_\sharp^2\|\partial_x\psi\|_{L^2(\R)}^2+\left(\vartheta_\sharp^2+\mu_\sharp^2\right)\|\psi\|_{L^2(\R)}^2 \\[.1cm]
    &\,+2c_\sharp(\theta_\sharp-\mu_\sharp)\Re\left(i\left\langle\sigma_3\partial_x\psi,\psi\right\rangle_{L^2(\R)}\right)-2c_\sharp\Re\left(i\left\langle\sigma_3\partial_x\psi,\FF^{-1}\left(L(\widehat\psi)\right)\right\rangle_{L^2(\R)}\right)\\[.1cm]
    &\,-2\vartheta_\sharp\mu_\sharp\Re\left(\left\langle\sigma_1\psi,\psi\right\rangle_{L^2(\R)}\right)-2\vartheta_\sharp\Re\left(\left\langle\sigma_1\psi,\FF^{-1}\left(L(\widehat\psi)\right)\right\rangle_{L^2(\R)}\right)\\[.1cm]
    &\,-2\mu_\sharp\Re\left(\left\langle\psi,\FF^{-1}\left(L(\widehat\psi)\right)\right\rangle_{L^2(\R)}\right)+\left\|\FF^{-1}\left(L(\widehat\psi)\right)\right\|_{L^2(\R)}^2.
\end{split}
\end{equation}
The terms on the second line in \eqref{eq:D1s} are of the schematic form $\Im\left(\left\langle\partial_x\psi,\psi\right\rangle_{L^2(\R)}\right)$ up to, constants, smooth and bounded matrix--valued functions, so that 
\[
\begin{split}
    \text{the second line of \eqref{eq:D1s}}\:\geq -C_1\int_\R\vert\partial_x\psi\vert\,\vert\psi\vert\,dx\geq -\frac{C_1\varepsilon}{2}\int_\R\vert \partial_x\psi\vert^2\,dx -\frac{C_1}{2\varepsilon}\int_\R\vert\psi\vert^2\,dx
\end{split}
\]
for a suitable $C_1>0$ and for every $\varepsilon>0$, recalling the inequality $ab=(\varepsilon^{1/2}a)(\varepsilon^{-1/2}b)\leq \frac{\varepsilon}{2}a^2+\frac{1}{2\varepsilon}b^2$. Moreover, the terms on the last two lines of \eqref{eq:D1s} can be bounded from below by
\[
-C_2\int_\R\vert\psi\vert^2\,dx
\]
for some other constant $C_2>0$. Combining the previous estimates one obtains
\[
\Vert \D\psi\Vert^2_{L^2(\R)}+ M \Vert \psi\Vert^2_{L^2(\R)} \geq \left(c_\sharp^2-\frac{C_1}{2}\varepsilon\right)\int_{\R}\vert\partial_x\psi\vert^2\,dx+\left(M+\vartheta_\sharp^2+\mu_\sharp^2-C_2-\frac{C_1}{2\varepsilon}\right)\int_\R\vert\psi\vert^2\,dx\,.
\]
Fixing $\varepsilon\leq c_\sharp^2/C_1$ and
\[
M\geq C_2+\frac{C_1}{2\varepsilon}-\frac{\vartheta_\sharp^2}{2}-\mu_\sharp^2\,,
\]
we find
\[
\Vert \D\psi\Vert^2_{L^2(\R)}+ M \Vert \psi\Vert^2_{L^2(\R)} \geq \frac{1}{2}\Vert \psi\Vert^2_{H^1(\R)}\,.
\]
Conversely, given the smoothness and boundedness of the coefficients in \eqref{eq:D1s}, it is not hard to see that an estimate in the other way around holds, namely
\[
\Vert \D\psi\Vert_{L^2(\R)}^2+M\|\psi\|_{L^2(\R)}^2 \leq B \Vert\psi\Vert_{H^1(\R)}^2\,,
\]
for a suitable constant $B>0$ Together with the previous estimates, this yields the equivalence between the two norms. In view of the already proved boundedness in the graph norm, we can conclude.
\end{proof}


\subsection{Conclusion of the fixed point argument} 

Relying on Proposition \ref{prop:Cbound}, we rewrite \eqref{eq:D_perp_2} in the form
\begin{equation}\label{eq:fpzeta}
\widehat\zeta=A^\delta(\widehat\zeta)\,,
\end{equation}
with 
\begin{equation}\label{eq:F}
A^\delta(\widehat\zeta):=\left((\mathbb{I}+C^\delta)^{-1}\widehat\D_0^{-1}\right)\RR^\delta(\widehat\zeta)\,,
\end{equation}
and we complete a fixed point argument for $A^\delta$ in the Fourier analogue of the space $Y$ of Proposition \ref{prop:kernelD}, namely
\begin{equation}\label{eq:Y}
\widehat Y:=\left\{\widehat\psi\in L^{2,1}(\R)\,:\, \overline{\widehat{\zeta}(\xi)}=\sigma_1 \widehat{\zeta}(-\xi)\,,\,
\widehat{\zeta}(\xi)=\sigma_1 \widehat{\zeta}(-\xi) \right\}\,.
\end{equation}

\begin{proposition}
    \label{lem:F}
There exists $\delta_0 >0$ and a map
\[
(0,\delta_0) \ni \delta\mapsto \widehat \zeta^\delta \in \widehat Y
\]
such that $\widehat \zeta^\delta$ is a solution of \eqref{eq:fpzeta}. Moreover, it satisfies the bound 
\begin{equation}
\label{eq:boundZeta}
\lVert \widehat \zeta^\delta \rVert_{L^{2,1}(\R) } \lesssim \delta^{-1}.
\end{equation}
\end{proposition}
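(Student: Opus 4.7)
The proof is a Banach fixed-point argument for the map $A^\delta$ defined in \eqref{eq:F}, carried out in the closed ball
\[
B_\delta := \{\widehat\zeta \in \widehat Y : \|\widehat\zeta\|_{L^{2,1}(\R)} \leq K\delta^{-1}\}
\]
for a suitable constant $K>0$ independent of $\delta$. Before the fixed-point argument, I would verify that $A^\delta$ is a well-defined self-map of $\widehat Y$: the bandwidth cutoff $|\xi|\leq\delta^{\tau-1}$ in \eqref{eq:bl} grants arbitrary regularity to the $\eta^{\mathrm{near}}$ produced through \eqref{compact_not_near/far}, and the symmetries defining $\widehat Y$ in \eqref{eq:Y} translate exactly into those of $H_s^{2,\mathrm{near}}(\R)$ in \eqref{eq:nfspaces}. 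Corollary \ref{cor:solfar} then furnishes a corresponding $\eta^{\mathrm{far}}(\eta^{\mathrm{near}},\delta)\in H_s^{2,\mathrm{far}}(\R)$, so $\RR^\delta(\widehat\zeta)$ in \eqref{eq:Rdelta} is well defined, while the preservation of $\widehat Y$ follows from Proposition \ref{prop:kernelD} for $\widehat\D_0^{-1}$ and from the invariance of \eqref{eq:diracnear} under \eqref{eq:xisym} for $(\mathbb{I}+C^\delta)^{-1}$.

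\textbf{Estimates.} Since $\widehat\D_0^{-1}:L^2(\R)\to L^{2,1}(\R)$ and $(\mathbb{I}+C^\delta)^{-1}:L^{2,1}(\R)\to L^{2,1}(\R)$ are bounded uniformly as $\delta\to 0^+$ by Proposition \ref{prop:Cbound}, both the self-map and the Lipschitz estimates for $A^\delta$ reduce to analogous bounds for $\RR^\delta$ in $L^2(\R)$. The dominant contribution to $\|\RR^\delta(\widehat\zeta)\|_{L^2}$ comes from $\widetilde F_\pm(\pi+\delta\xi)$: a change of variables combined with \eqref{est_F} gives $\|\chi(|\xi|\leq\delta^{\tau-1})\widetilde F_\pm(\pi+\delta\cdot)\|_{L^2(\R)}\lesssim\delta^{-1}$, which drives the $\delta^{-1}$ bound on $A^\delta(\widehat\zeta)$. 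The $\eta^{\mathrm{far}}$-dependent linear terms are of smaller order $O(\delta^{-\tau})$ thanks to the bound $\|\eta^{\mathrm{far}}\|_{H^2}\lesssim\delta^{1/2-\tau}$ from Corollary \ref{cor:solfar}, and the corresponding Lipschitz estimates follow from the contractivity of $\mathcal E$ in \eqref{eq:Efarlip}. The core input for controlling the nonlinear term $\mathcal N_\delta(\eta)$ is the estimate
\[
\|\eta^{\mathrm{near}}\|_{L^\infty(\R)}\,\lesssim\,\delta\int_{|\xi|\leq\delta^{\tau-1}}|\widehat\eta_\pm(\xi)|\,d\xi\,\lesssim\,\delta\,\|\widehat\zeta\|_{L^{2,1}(\R)}\,\lesssim\,1,
\]
obtained via the change of variable $\xi=(k-\pi)/\delta$ in \eqref{compact_not_near/far} and Cauchy--Schwarz with the weight $(1+\xi^2)^{-1/2}$. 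Combined with $\|\eta^{\mathrm{near}}\|_{L^2}\lesssim\delta^{-1/2}$, with $\|\eta_1^{\mathrm{near}}-\eta_2^{\mathrm{near}}\|_{L^2}\lesssim\delta^{1/2}\|\widehat\zeta_1-\widehat\zeta_2\|_{L^{2,1}}$, with the algebraic inequality $|f(z)-f(w)|\lesssim(|z|+|w|)^{p-1}|z-w|$ for $f(u)=|u|^{p-1}u$ ($p=2,3$), and with the explicit $\delta$-prefactors in \eqref{eq:N_delta}, this produces $\|\mathcal N_\delta(\eta)\|_{L^2}\lesssim\delta^{1/2}$ and a Lipschitz contribution of order $\delta^{3/2}\|\widehat\zeta_1-\widehat\zeta_2\|_{L^{2,1}}$, i.e.\ $o(\delta^{-1})$ and $o(1)$ respectively.

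\textbf{Conclusion and main obstacle.} Choosing $K$ large enough and $\delta$ small enough, the above bounds yield $A^\delta(B_\delta)\subseteq B_\delta$ together with a Lipschitz constant strictly less than $1$ for $A^\delta$ on $B_\delta$. Banach's fixed-point theorem then delivers a unique $\widehat\zeta^\delta\in B_\delta$ solving \eqref{eq:fpzeta}, which automatically fulfills \eqref{eq:boundZeta}; continuity of the map $\delta\mapsto\widehat\zeta^\delta$ is a standard consequence of the parametric nature of the contraction. The main obstacle is the nonlinear term: since $\|\widehat\zeta\|_{L^{2,1}}$ can be as large as $\delta^{-1}$, the cubic interaction in $\mathcal N_\delta$ is a priori of the same size as the forcing $\widetilde F$ after inversion of the Dirac operator, and standard Sobolev embeddings of $\widehat\eta_\pm$ alone do not produce a small Lipschitz constant. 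The saving ingredient is the explicit factor of $\delta$ emerging from rescaling the Bloch decomposition to the slow variable $y=\delta x$, which exactly compensates the large $L^{2,1}$-norm of $\widehat\zeta$ via an $L^1$-Fourier estimate and yields the uniform $L^\infty$-control $\|\eta^{\mathrm{near}}\|_{L^\infty}\lesssim 1$ that makes both the self-map and the contraction estimates work.
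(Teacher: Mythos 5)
Your proposal is correct and follows essentially the same route as the paper: a Banach fixed--point argument for $A^\delta$ on a ball of radius $\sim\delta^{-1}$ in $\widehat Y$, with the forcing term $\widetilde F_\pm$ (via \eqref{est_F} and the change of variables $k=\pi+\delta\xi$) driving the $\delta^{-1}$ bound, the far components controlled through Corollary \ref{cor:solfar}, and the inversions supplied by Propositions \ref{prop:kernelD} and \ref{prop:Cbound}. The only (harmless) deviation is that you control the cubic term through the direct $L^1$--Fourier estimate $\lVert\eta^{\mathrm{near}}\rVert_{L^\infty}\lesssim\delta\lVert\widehat\zeta\rVert_{L^{2,1}}\lesssim 1$, whereas the paper uses the weaker bound $\lVert\eta^{\mathrm{near}}\rVert_{H^2}\lesssim\delta^{1/2}\lVert\zeta\rVert_{L^2}\lesssim\delta^{-1/2}$ from \cite[Lemma 6.9]{FLW17} plus Sobolev embedding; both suffice to make the nonlinear contribution $o(\delta^{-1})$ and the Lipschitz constant $o(1)$.
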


\begin{proof}
Our goal is to prove that $A^\delta$ is a contraction on a suitable ball $B(M\delta^{-1})\subset \widehat Y$ of radius $M\delta^{-1}$ with $\delta$ sufficiently small and $M$ to be suitably chosen. Since the equation of $\widehat\zeta$ involves $\eta^{\mathrm{far}}$, preliminarily we have to guarantee that also in this setting it is actually a function of $\eta^{\mathrm{near}}$ and $\delta$ (as used up to this point), with $\eta^{\mathrm{near}}$ connected to $\widehat\zeta$ through \eqref{eq:bl}, \eqref{eq:zeta} and \eqref{eq-dinuovonear}. In other words, we have to guarantee that Corollary \ref{cor:solfar} and the last part of Proposition \ref{prop:lipbound} are still valid here. However, arguing as in \cite[Lemma 6.9]{FLW17}, we get
\begin{equation}\label{eq:nears}
\Vert\eta^{\mathrm{near}}\Vert_{H^2(\R)}\lesssim \delta^{1/2}\Vert \zeta \Vert_{L^2(\R)}\lesssim M\delta^{-1/2},\qquad\forall \widehat\zeta\in B(M\delta^{-1}).
\end{equation}
Thus, properly using \eqref{eq:farbound} and \eqref{eq:Efarlip}, there results that Corollary \ref{cor:solfar} and the last part of Proposition \ref{prop:lipbound} hold replacing $R$ with $M\delta^{-1}$, whenever (for instance) $\gamma\simeq\sqrt{2+M^2}$ in the definition of $\rho_\delta$. Clearly, the result holds for $\delta>0$ sufficiently small depending on $M$.

Now, we can focus on the proof of the fact that
\begin{equation}\label{eq:ball}
A^\delta(B(M\delta^{-1}))\subseteq B(M\delta^{-1}).
\end{equation}
Note that, since by Proposition \ref{prop:Cbound} the operator $(\mathbb{I} + C^\delta)^{-1}$ is bounded from $L^{2,1}(\R)$ to itself and by Proposition \ref{prop:kernelD} the operator $\widehat \D_0^{-1}$ is bounded from $L^2(\R)$ to $L^{2,1}(\R)$, it is sufficient to discuss  $\RR^\delta $ as a map from $L^{2,1}(\R)$ to $L^2(\R)$. To this aim, we denote by $I,\,II,\,III,\,IV$ the four addends of $\RR^\delta(\widehat\zeta)$ given by \eqref{eq:Rdelta} and estimate them separately. We also consider at each step $\widehat \zeta \in B(M \delta^{-1})$.

First, since $W \in L^\infty (\R)$, by \cite[Lemma 6.12]{FLW17} we have 
\begin{equation}\label{eq:I}
\begin{split}
    \lVert I \rVert_{L^2(\R)} \lesssim \delta^{-\frac 12} \lVert W \eta^{\mathrm{far}} \rVert_{L^2(\R)} \lesssim \delta^{-\frac 12} \lVert \eta^{\mathrm{far}} \rVert_{L^2(\R)}\,.
\end{split}
\end{equation}
To estimate the last norm above, we exploit the fact that $\eta^{\mathrm{far}}$ solves the fixed point equation \eqref{fp_E} in the ball $B(\rho_\delta)\subset H_s^{2,\mathrm{far}}(\R)$, with $\rho_\delta$ defined as in Proposition \ref{prop:lipbound} and $\gamma$ as before, since this entails 
\begin{equation}
\label{eq:fars}
\Vert \eta^{\mathrm{far}}\Vert^2_{H^2(\R)} \lesssim \delta^{1-2\tau}(2+M^2) \,,
\end{equation}
(again for $\delta>0$ small depending on $M$). In view of \eqref{eq:I} this yields
\begin{equation}\label{eq:Ies}
\Vert I\Vert_{L^2(\R)}\leq C_1\delta^{-\tau}(2+M^2)
\end{equation}
for some constant $C_1>0$. Note that for $II$ one has a completely analogous estimate since $U_0(\cdot,\delta\cdot),\,U_1(\cdot,\delta\cdot)\in L^\infty(\R)$ (see \eqref{eq:U0}, Theorem \ref{thm:main2} and the proof of Lemma \ref{lem:estU}).

To estimate $III$, we combine \eqref{est_F} with \cite[Lemma 6.12]{FLW17} to obtain
\begin{equation}\label{eq:IIes}
\lVert III \rVert_{L^2(\R)} \lesssim \delta^{-\frac 12}\lVert F \rVert_{L^2(\R)} \leq C_2 \, \delta^{-1}\,,
\end{equation}
for some other constant $C_2>0$.

To conclude, again by \cite[Lemma 6.9 \& Lemma 6.12]{FLW17}, together with \eqref{est_N} and Corollary \ref{cor:solfar}, we have 
\begin{equation}\label{eq:III}
    \lVert IV \rVert^2_{L^2(\R)} \lesssim \delta^{- 1} \lVert \mathcal N_\delta(\eta) \rVert^2_{L^2(\R)} \lesssim \delta\Vert \eta^{\mathrm{far}}+\eta^{\mathrm{near}}\Vert^4_{H^2(\R)}+\delta^3\Vert \eta^{\mathrm{far}}+\eta^{\mathrm{near}}\Vert^6_{H^2(\R)}\,.
\end{equation}
Thus, using \eqref{eq:nears}, \eqref{eq:fars} and the triangular we find
\begin{equation}\label{eq:IIIes}
\Vert IV\Vert_{L^2(\R)}\leq C_3(M) \delta^{-1/2}
\end{equation}
where the constant $C_3=C_3(M) >0$ depends on $M$. This holds again provided $\delta>0$ small with respect to $M$.

Summing up,
\begin{equation}\label{eq:Rest}
\Vert\mathcal R^\delta(\widehat\zeta)\Vert_{L^2(\R)} \leq C_1\delta^{-\tau}(2+M^2)+ C_2 \delta^{-1}+ C_3(M)\delta^{-1/2}
\end{equation}
and, fixing
\[
M:=1+C_2\|(\mathbb{I}+C^{\delta^\dagger})^{-1}\|_{L^{2,1}(\R)\to L^{2,1}(\R)}\|\widehat\D_0^{-1}\|_{L^{2}(\R)\to L^{2,1}(\R)}
\]
with $\delta^\dagger$ given by Proposition \ref{prop:Cbound}, since $0<\tau <1/2$ there results 
\[
\lVert A^\delta(\widehat \zeta) \rVert_{L^2} \leq M \delta^{-1}\,,
\]
for $\delta>0$ small enough, which proves \eqref{eq:ball}.

Moreover, by similar computations and using \eqref{eq:Efarlip}, one obtains
\[
\begin{split}
\lVert A^\delta(\widehat \zeta_1 ) - A^\delta(\widehat \zeta_2) \rVert_{L^2(\R)} &\lesssim \delta^{-\frac12} \left( \lVert \eta^{\mathrm{far}}_1 - \eta^{\mathrm{far}}_2 \rVert_{L^2(\R)} + \lVert \mathcal N_\delta(\eta_1) - \mathcal N_\delta(\eta_2) \rVert_{L^2(\R)} \right)\\
& \lesssim \delta^{-\frac 12 } \delta^{1-\tau}  \lVert \eta^{\mathrm{near}}_1 - \eta^{\mathrm{near}}_2 \rVert_{L^2(\R)} \lesssim \delta^{\frac12-\tau} \lVert \widehat\zeta_1 - \widehat\zeta_2 \rVert_{L^2(\R)}\,,
\end{split}
\]
ensuring the existence of $\delta_0=\delta_0(\tau,M)>0$ such that $A^\delta$ is a contraction on $B(M \delta^{-1})$, for $0<\delta<\delta_0$. Thus, there exists a unique solution to \eqref{eq:fpzeta} in $L^{2,1}(\R)$ satisfying the bound \eqref{eq:boundZeta}.
\end{proof}

We are now in a position to conclude the proof of Theorem \ref{thm:main1}, assuming $\vartheta_\sharp>0$. In Remark \ref{rmk:proof_theta_neg} we describe the changes needed to handle the case $\vartheta_\sharp<0$.

\begin{proof}[End of the proof of Theorem \ref{thm:main1} when $\vartheta_\sharp>0$]
By Proposition \ref{lem:F} and Remark \ref{rem:local}, for $\delta>0$ sufficiently close to $0$ the system \eqref{eq:diracnear} admits a solution $\widehat\zeta^\delta\in \widehat Y\subseteq L^{2,1}(\R)$ with 
\begin{equation}
\label{eq:stimafin}
\Vert \widehat\zeta^\delta\Vert_{L^{2,1}(\R)}\lesssim\delta^{-1}
\end{equation}
supported on the set $\{\vert\xi\vert\leq \delta^{\tau-1} \}$, namely
    \[
    \widehat\zeta^\delta(\xi)=\chi(\vert\xi\vert\leq \delta^{\tau-1})\widehat\zeta^\delta(\xi).
    \]
Let then $\eta_\delta^{\mathrm{near}}$ be the function associated to $\zeta^\delta$ by\eqref{eq:bl}, \eqref{eq:zeta} and \eqref{eq-dinuovonear}, and $\eta_\delta^{\mathrm{far}}=\eta^\mathrm{far}(\eta^\mathrm{near}_\delta,\delta)$ the functions given by Corollary \ref{cor:solfar}. Setting $\eta_\delta:=\eta_\delta^{\mathrm{near}}+\eta_\delta^{\mathrm{far}}$, $u_\delta$ as in \eqref{eq:ansatz} and $\mu_\delta=\mu_*+\delta\mu_\sharp$, then $u_\delta\in H^2(\R)$ solves \eqref{NLS} and it is real--valued and even by Remark \ref{rem:realU} and Proposition \ref{prop:real}. Moreover,  by \eqref{eq:stimafin}, \cite[Lemma 6.9]{FLW17} and \eqref{eq:farbound} we obtain $\Vert \eta_\delta\Vert_{H^2(\R)}\lesssim\delta^{-1/2}$, which together with Lemma \ref{lem:estU} yields
\[
	 \begin{split}
	\Vert u_\delta(\cdot) -\sqrt{\delta} U_0(\cdot,\delta\cdot)\Vert_{H^2(\R)}&\,=
	\delta^{3/2} \Vert U_1(\cdot,\delta\cdot)+ \eta_\delta(\cdot)\Vert_{H^2(\R)}\\
	&\,\leq \delta^{3/2} \left(\Vert U_1(\cdot,\delta \cdot)\Vert_{H^2(\R)}+ \Vert\eta_\delta\Vert_{H^2(\R)}\right)\lesssim \delta\,,
	 \end{split}
	 \]
proving \eqref{eq:uapp} and completing the proof of Theorem \ref{thm:main1}.
\end{proof}
\begin{remark}\label{rmk:proof_theta_neg}
    As we anticipated at the beginning of Section \ref{sec:proofmain2}, the proof of Theorem \ref{thm:main1} in the case $\vartheta_\sharp<0$ is almost identical to the one developed so far for positive values of $\vartheta_\sharp$. The major difference lies in the fact that when $\vartheta_\sharp<0$ one looks from the very beginning for a solution $\eta_\delta$ which is an odd function. To do this, it is enough to replace the definition of $L_s^{2,\mathrm{near}}(\R)$, $L_s^{2,\mathrm{far}}(\R)$ given in \eqref{eq:nfspaces} with the following ones
    \[
    \begin{split}
   L^{2,\mathrm{near}}_s(\R)& \coloneqq \{ f \in L^2(\R) \colon \widetilde f_n(k) \equiv \widetilde f_{n}^{\mathrm{near}}(k)\;\; \forall n\in I,\, \widetilde f_\pm(2\pi-k)=\overline{\widetilde f_\mp}(k)=-\widetilde f_\mp(k)\}\\
   L^{2,\mathrm{far}}_s(\R) &\coloneqq \{ f \in L^2(\R) \colon \widetilde f_n(k) \equiv \widetilde f_{n}^{\mathrm{far}}(k)\;\;\forall n\in I, \, \widetilde f_\pm(2\pi-k)=\overline{\widetilde f_\mp}(k)=-\widetilde f_\mp(k)\,,\\
   &\qquad\qquad\qquad\qquad\qquad\qquad\qquad\qquad\qquad \widetilde f_n(2\pi-k)=\overline{\widetilde f_n}(k)=-\widetilde f_n(k)\;\;\forall n\in I_1\}\,.
  \end{split}
    \]
With these new definitions, the proof of Proposition \ref{prop:real} adapts to show that functions in $L^{2,\mathrm{near}}_s(\R)$ and $L^{2,\mathrm{far}}_s(\R)$ are now real and odd, and it is again readily seen that the system \eqref{eq_near/far} is invariant under the symmetries embodied in these new spaces. The fixed point argument performed to construct $\eta^{\mathrm{far}}$ works exactly as above, whereas for $\eta^{\mathrm{near}}$ this change reflects into the following new symmetries for $\widehat\zeta(\xi)=(\widehat\eta_-, \widehat\eta_+)^T$
\[
\begin{cases}
\overline{\widehat{\zeta}(\xi)}=\sigma_1 \widehat{\zeta}(-\xi)\\
\widehat{\zeta}(\xi)=-\sigma_1 \widehat{\zeta}(-\xi)\,.
\end{cases}
\]
in place of \eqref{eq:xisym}, that in physical space corresponds to substitute condition \eqref{eq:xsym2} with $\zeta(x)=-\sigma_1\zeta(-x)$. This affects only the proof of the invertibility of the operator $\D_0$, as we now need to show that the kernel of this operator is trivial when restricted to the space $Y=\{\zeta\in H^1(\R)\,:\,\overline\zeta=\sigma_1\zeta,\,\zeta(\cdot)=-\sigma_1\zeta(-\cdot)\}$. However, this can be done arguing exactly as in the proof of Proposition \ref{prop:kernelD}, since when $\vartheta_\sharp<0$ the function $\Psi'$ is such that $\Psi'(x)=\sigma_1\Psi'(-x)$ by \eqref{eq:psisimm}. The rest of the argument can then be repeated verbatim, completing the proof of Theorem \ref{thm:main1} also in the case of negative $\vartheta_\sharp$.  
\end{remark}


\section*{Statements and Declarations}


\noindent\textbf{Conflict of interest}  The authors declare that they have  no conflict of interest.


\bigskip

\noindent\textbf{Acknowledgements.} The authors acknowledge that this study was carried out within the project E53D23005450006 ``Nonlinear dispersive equations in presence of singularities'' -- funded by European Union -- Next Generation EU within the PRIN 2022 program (D.D. 104 - 02/02/2022 Ministero dell'Universit\`a e della Ricerca). This manuscript reflects only the author's views and opinions and the Ministry cannot be considered responsible for them. The present research has been supported by MUR grant ``Dipartimento di Eccellenza'' 2023-2027 of Dipartimento di Matematica, Politecnico di Milano. The authors are members of {\em Gruppo Nazionale per l'Analisi Matematica, la Probabilit\`a e le loro Applicazioni} (GNAMPA) of the {\em Istituto Nazionale di Alta Matematica} (INdAM).


\end{document}